  \pgfplotsset{compat=newest}
\pgfplotsset{every axis/.append style={
        scaled ticks = false, 
        tick label style={/pgf/number format/fixed}
    }
}
\newlength\figureheight 
\newlength\figurewidth 
\begin{document}





\title{The Hermite-Taylor Correction Function Method for Embedded Boundary and Maxwell's Interface Problems}

\titlerunning{The HT-CF Method for Embedded Boundary and Maxwell's Interface Problems}        

\author{Yann-Meing Law \and Daniel Appel\"{o} \and Thomas Hagstrom
}


\institute{Y.-M. Law  \at
              Department of Mathematics and Industrial Engineering, Polytechnique Montr\'eal, C.P. 6079, succ. Centre-ville, Montr\'eal, QC H3C 3A7, Canada,
              \email{yann-meing.law@polymtl.ca}   \\
              D. Appel\"{o} \at 
              Department of Mathematics, Virginia Tech, Blacksburg, 24060, VA, USA,
              \email{appelo@vt.edu} \\
              T. Hagstrom \at 
              Department of Mathematics, Southern Methodist University, Dallas, 75275, TX, USA,
              \email{thagstrom@smu.edu}      
}

\date{Received: date / Accepted: date}

\maketitle



\begin{abstract}
We propose a novel Hermite-Taylor correction function method to handle embedded boundary and 
	interface conditions for Maxwell's equations. 
The Hermite-Taylor method evolves the electromagnetic fields and their derivatives through order $m$ in each
Cartesian coordinate.
This makes the development of a systematic approach to enforce boundary and interface conditions difficult.
Here we use the correction function method to update the numerical solution where the Hermite-Taylor method 
	cannot be applied directly. 
Time derivatives of boundary and interface conditions,
    converted into spatial derivatives, 
    are enforced to obtain a stable method and relax the time-step size restriction of the Hermite-Taylor correction function method.
The proposed high-order method offers a flexible systematic approach to handle embedded boundary and interface problems, 
	including problems with discontinuous solutions at the interface.
This method is also easily adaptable to other first order hyperbolic systems.

\keywords{Hermite method \and  Correction function method \and  Maxwell's equations \and High order  \and Embedded boundary conditions \and Interface conditions}
 \subclass{35Q61 \and 65M70 \and 78A45} 

\end{abstract}

%


\section{Introduction}

Interface and boundary problems are of great importance in electromagnetics. 
The former type of problem involves interfaces between different media 
	and is found in many applications in electromechanics, 
	biophotonics and
	magneto-hydrodynamics, 
	to name a few.
The latter type of problem focuses on the interaction between 
	the electromagnetic fields and a surface,
	and is found in waveguide applications. 
	
In computational electromagnetics, 
	many challenges arise from those types of problems.
The development of efficient high-order methods are important to diminish the phase error for 
	long time simulations \cite{Hesthaven2003}.
This is particularly difficult for interface problems where the solution can be discontinuous at the interface. 
In addition to high-order accuracy, 
	a numerical method should also be able to handle complex geometries.
Many high-order numerical methods have been developed to handle Maxwell's interface and boundary problems, 
	such as finite-difference time-domain (FDTD) methods 
	\cite{Ditkowski2001,Cai2003,Zhao2004,Zhang2016,Banks2020,LawNave2022}, 
	discontinuous Galerkin (DG) methods \cite{cockburn2001runge,Hesthaven2002}
	and pseudo-spectral methods \cite{Yang1997,Driscoll1999,Fan2002,Galagusz2016}. 
	
In this work, 
	we focus on the Hermite-Taylor method, 
	introduced by Goodrich, Hagstrom and Lorenz in 2005 \cite{Goodrich2005}.
This high-order method is particularly well-suited for linear hyperbolic problems with periodic boundary conditions.
 By evolving in time the variables and their derivatives through order $m$ in each coordinate,
	the Hermite-Taylor method achieves a $(2\,m+1)$ rate of convergence.
The stability condition of this method depends only on the largest wave speed of the problem and is 
	 independent of the order of accuracy, 
	 making the Hermite-Taylor method appealing for large scale problems.

The difficulty in designing a systematic approach to handle the boundary conditions has prevented the use of the Hermite-Taylor method
	for many engineering and real-world scientific problems. 
Hybrid DG-Hermite methods were proposed to circumvent this issue by taking advantage of a DG method to 
	handle the boundary conditions on complex geometries \cite{Chen2010,OversetHermiteDG}. 	
A DG solver is used on an unstructured or boundary fitted curvilinear mesh 
	which encloses a Cartesian mesh where the Hermite method is applied. 
A local time-stepping procedure is needed to retain the large time-step sizes of the Hermite method. 
	
Another method based on compatibility boundary conditions was developed for the wave equation on 
	Cartesian and curvilinear meshes \cite{compat_wave_hermite_AAL_DEAA_WDH}. 
In $d$ dimensions, 
	this method computes the $(m+1)^d$ degrees of freedom on the boundary by enforcing the physical boundary 
	condition as well as the compatibility boundary conditions. 
However, 
	the extension of this method to first order hyperbolic systems is not straightforward {\color{black} due to the need for a characteristic decomposition and the presence of characteristic variables for the Maxwell system.} 

In \cite{LawAppelo2023},
	the Hermite-Taylor correction function method was proposed to handle general boundary conditions
	for Maxwell's equations.
This method relies on the correction function method (CFM) to update the numerical solution and its derivatives 
where the Hermite-Taylor method cannot be applied. 
The correction function method seeks space-time polynomial approximations of the solution in small domains,
    called local patches, 
    near the boundary or the interface
	by minimizing a functional.
The functional is based on a square measure of the residual of the governing equations, 
	boundary or interface conditions and the numerical solution from the original method (here the Hermite-Taylor method).
 The CFM minimization procedure provides polynomial approximations,
	also called correction functions, 
	that are used to compute the $(m+1)^d$ degrees of freedom at the nodes where the Hermite-Taylor method cannot be applied.
The CFM was first developed to enhance finite-difference methods for Poisson's equation 
	with interface conditions \cite{Marques2011,Marques2017}.
It has been extended to the wave equation \cite{Abraham2018} and 
	Maxwell's equations \cite{LawMarquesNave2020,LawNave2021,LawNave2022}.
 
 From a CFM point of view, the Hermite-Taylor setting provides several advantages compared to finite-difference methods. 
Indeed, 
    the Hermite method uses a stencil that remains the same regardless of its order and naturally provides space-time polynomials approximating the solution that are required in 
    the CFM functional.
The Hermite-Taylor correction function method presented in \cite{LawAppelo2023} achieved up to a seventh-order rate of convergence with a loose CFL constant 
	but was limited to boundaries aligned with the nodes.
 In this paper, 
    we extend the Hermite-Taylor correction function method to embedded boundary and interface problems.
 We consider the situation where the mesh resolution allows the numerical solution from the original method to be available around the interface and boundary. 
 In other words, 
    interfaces and boundaries are sufficiently far away from each other to construct CFM local patches, 
    leaving close contact interface problems for future work.
 
The paper is organized as follows. 
We introduce Maxwell's equations with boundary and interface conditions in Section~\ref{sec:problem_definition}.
The Hermite-Taylor method in two space dimensions is presented in detail in Section~\ref{sec:HermiteTaylor}. 
The correction function method in the Hermite-Taylor setting for embedded boundary and interface problems is 
	described in Section~\ref{sec:correction_function_method}. 
Finally, 
	in Section~\ref{sec:numerical_examples},
	numerical examples in one and two space dimensions,
	including problems with discontinuous electromagnetic fields, 
	are performed to verify the properties of the Hermite-Taylor correction function method.
	

\section{Problem Definition} \label{sec:problem_definition}

In this work, 
	we are seeking numerical solutions to Maxwell's equations
\begin{equation} \label{eq:Maxwell}
\begin{aligned}
	\mu(\mathbold{x})\,\partial_t \mathbold{H} + \nabla\times \mathbold{E} =&\,\, 0, \\
	\epsilon(\mathbold{x})\,\partial_t \mathbold{E} - \nabla\times\mathbold{H} =&\,\, - \mathbold{J}, \\
	\nabla\cdot(\epsilon(\mathbold{x})\,\mathbold{E}) =&\,\, \rho , \\
	\nabla\cdot(\mu(\mathbold{x})\,\mathbold{H})=&\,\, 0, 
\end{aligned}
\end{equation}
 	in a domain $\Omega \subset \mathbb{R}^d$ ($d=1,2$) and a time interval $I=[t_0,t_f]$.
Here $\mathbold{H}$ is the magnetic field, 
	$\mathbold{E}$ is the electric field, 
	$\mu(\mathbold{x})>0$ is the magnetic permeability and $\epsilon(\mathbold{x})>0$ is the electric permittivity.
The initial conditions are given by 
\begin{equation} \label{eq:initial_cdns}
\begin{aligned}
	\mathbold{H}(\mathbold{x},0) =&\,\, \mathbold{H}_0(\mathbold{x})	\quad \text{in } \Omega, \\
	\mathbold{E}(\mathbold{x},0) =&\,\, \mathbold{E}_0(\mathbold{x})	\quad \text{in } \Omega,
\end{aligned}
\end{equation}
	and the boundary condition is given by 
\begin{equation} \label{eq:boundary_condition}
	\mathbold{n}\times\mathbold{E} = \mathbold{g}(\mathbold{x},t) \quad \text{on } \partial\Omega\times I.
\end{equation}
Here $\partial\Omega$ is the smooth boundary of the domain $\Omega$, 
	$\mathbold{n}$ is the outward unit normal to $\partial\Omega$
	and $\mathbold{g}(\mathbold{x},t)$ is a known function.
Note that the situation where $\mathbold{g} = 0$ corresponds to perfect electric conductor (PEC) boundary conditions. For results on the well-posedness of Maxwell's equations with PEC boundary conditions, we refer the reader to \cite{Assous2018}.
	
We also consider Maxwell's interface problems. 
In this situation, 
	the domain $\Omega$ is subdivided into two subdomains $\Omega^+$ and $\Omega^-$,
	and is such that $\Omega = \Omega^+ \cup \Omega^-$ and 
	$\Omega^+\cap\Omega^- = \Gamma$.
Here $\Gamma$ is the smooth interface between the subdomains. 
Fig.~\ref{fig:typical_geo} illustrates a typical geometry of a domain $\Omega$.
\begin{figure}
 	\centering
	\includegraphics[width=2.0in]{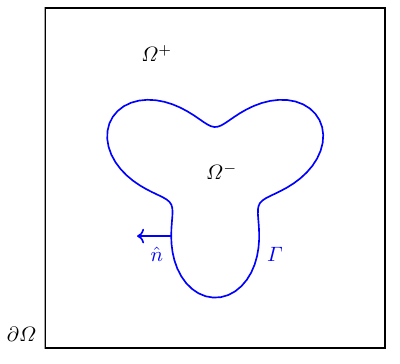}
       \caption{Geometry of a domain $\Omega$. The domain consists of two materials.}
	\label{fig:typical_geo}
\end{figure}
The physical parameters are assumed to be piecewise constant and are given by
\begin{equation}
	\mu(\mathbold{x}) = \left\{ 
  \begin{array}{l}
    \mu^+ \quad \mbox{for} \quad \mathbold{x} \in \Omega^+, \\[1pt]
    \mu^- \quad \mbox{for} \quad \mathbold{x} \in \Omega^-,
  \end{array} \right. 
\end{equation}
and
\begin{equation}
	\epsilon(\mathbold{x}) = \left\{ 
  \begin{array}{l}
    \epsilon^+ \quad \mbox{for} \quad  \mathbold{x} \in\Omega^+, \\[1pt]
    \epsilon^- \quad \mbox{for} \quad  \mathbold{x} \in\Omega^-.
  \end{array} \right. 
\end{equation}
To complete Maxwell's equations, 
	we impose the interface conditions
\begin{equation} \label{eq:interface_cdns}
	\begin{aligned}
	\hat{\mathbold{n}}\times\llbracket \mathbold{E} \rrbracket =&\,\, 0 \quad \text{on } \Gamma \times I ,\\
	\hat{\mathbold{n}}\times\llbracket \mathbold{H} \rrbracket =&\,\, 0  \quad \text{on } \Gamma \times I ,\\
	\hat{\mathbold{n}}\cdot\llbracket \epsilon\,\mathbold{E} \rrbracket =&\,\, 0 \quad \text{on } \Gamma \times I ,\\
	\hat{\mathbold{n}}\cdot\llbracket \mu\,\mathbold{H} \rrbracket =&\,\, 0 \quad \text{on } \Gamma \times I.
	\end{aligned}
\end{equation}
Here $\llbracket \mathbold{H} \rrbracket = \mathbold{H}^+ - \mathbold{H}^-$ is the jump of the variable $\mathbold{H}$ on the interface, 
	$\mathbold{H}^+$ is the solution in $\Omega^+$, 
	$\mathbold{H}^-$ is the solution in $\Omega^-$ and 
	$\mathbold{\hat{n}}$ is the unit normal to the interface $\Gamma$. 

In what follows we will assume for simplicity that $\mathbold{J}=0$, $\rho=0$. The construction of the boundary and interface conditions are unchanged so long as the currents are supported in the volume.

\section{Hermite-Taylor Method} \label{sec:HermiteTaylor}
{\color{black} 
In this section, for completeness,
	we briefly describe the Hermite-Taylor method,
	introduced by Goodrich et al. \cite{Goodrich2005}, 
	to handle linear hyperbolic problems.} 
For simplicity, 
	the Hermite-Taylor method is presented in 2-D using the transverse magnetic (TM$_z$) mode.
In this situation, 
	Maxwell's equations are simplified to
\begin{equation} \label{eq:TMz_syst}
	\begin{aligned}
		\mu\,\partial_t H_x + \partial_y E_z =&\,\, 0,\\
		\mu\,\partial_t H_y - \partial_x E_z  =&\,\, 0,\\
		\epsilon\,\partial_t E_z - \partial_x H_y + \partial_y H_x =&\,\, 0,\\
		\partial_x H_x + \partial_y H_y =&\,\, 0, 
	\end{aligned}
\end{equation}
	in a domain $\Omega =[x_\ell,x_r]\times[y_b,y_t]$ and a time interval $I = [t_0,t_f]$.
Here we assume the physical parameters $\mu$ and $\epsilon$ to be constant.
We consider initial conditions on $H_x$, 
	$H_y$ and $E_z$, 
	and periodic boundary conditions. 

The Hermite-Taylor method uses a mesh that is staggered in space and time. 
The primal mesh is defined as
\begin{equation}
	(x_i,y_j)  = (x_\ell+i\,\Delta x, y_b+j\,\Delta y), \quad i=0,\dots,N_x, \quad j=0,\dots,N_y, 
\end{equation}
	with 
\begin{equation}
	\Delta x = \frac{x_r-x_\ell}{N_x}, \quad \Delta y = \frac{y_t-y_b}{N_y}.
\end{equation}
Here $N_x$ and $N_y$ are respectively the number of cells in the $x$ and $y$ directions.
The numerical solution on the primal mesh is centered at times
\begin{equation}
	t_n = t_0 + n\,\Delta t, \quad n = 0, \dots, N_t, \quad \Delta t = \frac{t_f-t_0}{N_t}.
\end{equation}
Here $N_t$ is the required number of time steps to reach $t_f$.
The nodes of the dual mesh are located at the cell centers of the primal mesh 
\begin{equation}
	(x_{i+1/2},y_{j+1/2})  = (x_\ell+(i+1/2)\,\Delta x, y_b+(j+1/2)\,\Delta y), 
 \end{equation}
 for $i=0,\dots,N_x-1$, $j=0,\dots,N_y-1$,
	and at times 
\begin{equation}
	t_{n+1/2} = t_0 + (n+1/2)\,\Delta t, \quad n=0,\dots,N_t-1.
\end{equation}

{\color{black} The Hermite-Taylor method requires three processes to evolve the electromagnetic fields and their spatial derivatives through order $m$ from the primal mesh at $t_n$ to the dual mesh at $t_{n+1/2}$ :}
\begin{itemize}
	\item[] \underline{Hermite interpolation}
	
		Let us consider a sufficiently accurate approximation of each electromagnetic field component, 
  for example $E_z$, and its derivatives $\frac {\partial^{k+\ell} E_z}{\partial x^k \partial y^{\ell}}$, $k,\ell = 0, \ldots ,m$,
  on the primal mesh at time $t_n$.
{\color{black}		
For each cell $[x_i,x_{i+1}]\times[y_j,y_{j+1}]$ of the primal mesh and for each electromagnetic field component, 
			we compute the unique degree $(2\,m+1)^2$ tensor-product polynomial satisfying the value and 
			the given derivatives of the electromagnetic field components at the corners of the cell.}
		The resulting polynomial is known as the Hermite interpolant. 
		
	\item[] \underline{Recursion relation}
		For each cell of the primal mesh and for each electromagnetic field, 
			we identify the derivatives of the Hermite interpolant at the cell center as scaled coefficients,
	{\color{black} denoted by $c_{k,\ell}^{H_x}(t)|_{t_n}$, 
        $c_{k,\ell}^{H_y}(t)|_{t_n}$ and $c_{k,\ell}^{E_z}(t)|_{t_n}$.
    Expanding each scaled coefficient of the Hermite interpolant in time using 
        \begin{equation}
        c_{k,\ell}(t) = \sum_{s=0}^{q} c_{k,\ell,s}\bigg(\frac{t-t_n}{\Delta t}\bigg)^s
        \end{equation}
        } gives us a space-time polynomial,
			referred to as the Hermite-Taylor polynomial, 
			approximating each electromagnetic field. 
{\color{black}		We then enforce Maxwell's equations and its derivatives at the cell center to obtain a recursion relation 
			for the scaled coefficients of the Hermite-Taylor polynomials 
\begin{equation} \label{eq:recursion_relations}
	\begin{aligned}
	c_{k,\ell,s}^{H_x} =&\,\, -\frac{(\ell+1)\,\Delta t}{\mu\,s\Delta y} c^{E_z}_{k,\ell+1,s-1}, \\
	c_{k,\ell,s}^{H_y} =&\,\, \frac{(k+1)\,\Delta t}{\mu\,s\Delta x} c^{E_z}_{k+1,\ell,s-1},\\
	c_{k,\ell,s}^{E_z} =&\,\, \frac{\Delta t}{\epsilon\,s}\bigg(\frac{(k+1)}{\Delta x} c^{H_y}_{k+1,\ell,s-1} - 
				\frac{(\ell+1)}{\Delta y} c^{H_x}_{k,\ell+1,s-1}\bigg),
	\end{aligned}
\end{equation}
	for $k,\ell = 0,\dots,2\,m+1$ and $s = 1,\dots,q$.
	Here $q = \nu\,(2\,m+1)$ in $\mathbb{R}^\nu$ so the Taylor expansion in time of the coefficients of Hermite interpolants is done exactly.}
	\item[] \underline{Time evolution}
		
		For each cell and for each electromagnetic field, 
			we update the numerical solution on the dual mesh by evaluating the Hermite-Taylor polynomials
			at the cell center $(x_{i+1/2},y_{j+1/2})$ and time $t_{n+1/2}$.
\end{itemize}
{\color{black} To complete the time step, 
	we repeat a similar procedure for each cell $[x_{i-1/2},x_{i+1/2}]\times[y_{j-1/2},y_{j+1/2}]$ of the dual mesh at time $t_{n+1/2}$ and 
	update the data on the primal mesh at $(x_i,y_j)$ at $t_{n+1}$.
The whole procedure is then repeated until the final time is reached.}

{\color{black} 
	
In the situation where the derivatives cannot be easily computed,
	we project the initial solution onto a polynomial space of degree at least $2\,m+1$ to maintain accuracy.
To do so, 
	for each electromagnetic field and for each primal node $(x_i,y_j)$, 
	we define the spatial domain $[x_{i-1/2},x_{i+1/2}]\times[y_{j-1/2},y_{j+1/2}]$ and project the initial 
	solution on the space of degree $(2\,m+2)^2$ tensor-product Legendre polynomials. 
We then approximate the required derivatives of each electromagnetic field at $(x_i,y_j)$ using the derivatives of 
	the Legendre polynomials approximating the electromagnetic fields.

The enforcement of boundary conditions is challenging for the Hermite-Taylor method since all values of the electromagnetic fields 
	and their derivatives through order $m$ in normal and tangential directions must also be known on the boundary, 
	which is not the case in general. 
For an embedded boundary,
	the boundary can be located between the nodes of the mesh which further complicates the enforcement of boundary conditions.
The imposition of interface conditions shares the same issue with the additional difficulty that 
	the electromagnetic fields could be discontinuous at the interface. 
In the next section,
	we present a new avenue to handle embedded boundary and interface conditions based on the 
	correction function method.
}

\section{Correction Function Method} \label{sec:correction_function_method}

The correction function method seeks a polynomial approximating each electromagnetic field component in the vicinity of the nodes
	where the Hermite-Taylor method cannot be directly applied.
We refer to such a node as a CF node. 
The node where the numerical solution can be updated using the Hermite-Taylor method is referred to as a Hermite node. 
The correction function method relies on the minimization of a functional describing the electromagnetic fields in the vicinity of 
	a CF node.
The approximations of the electromagnetic fields are sought in a polynomial space and a careful definition of the space-time domain 
	of the polynomials approximating the electromagnetic fields is required for accuracy. 
In this section,
	we describe in detail the correction function method in 1-D for embedded boundary and interface problems. 
We then extend the method to the two-dimensional case. 

\subsection{Embedded Boundary in One Dimension}

Let us consider the following 1-D simplification of Maxwell's equations
\begin{equation} \label{eq:Maxwell_1D}
	\begin{aligned}
		\mu\,\partial_t H + \partial_x E =&\,\, 0, \\
		\epsilon\,\partial_t E + \partial_x H =&\,\, 0,
	\end{aligned}
\end{equation}
	in the domain $\Omega=[x_\ell,x_r]$ and the time interval $[t_0,t_f]$.
Here $\mu$ and $\epsilon$ are constant. 
We enforce the boundary conditions
\begin{equation}
	E(x_\ell,t) = g_\ell(t) \qquad \mbox{and} \qquad E(x_r,t) = g_r(t).
\end{equation}
We consider the physical domain $\Omega$ to be embedded in a computational domain $\Omega_c = [x_0,x_N]$.
We then have two CF nodes, 
	one for the left boundary and one for the right boundary.
For simplicity, 
	we assume that both CF nodes belong to the primal mesh.
 
For the $q^{\text{th}}$ CF node at time $t_n$, 
	we define a functional 
\begin{equation} \label{eq:functional_boundary}
	J_q^n = \mathcal{G}_q^n + \mathcal{B}_q^n + \mathcal{H}_q^n.
\end{equation} 
The first part $\mathcal{G}_q^n$ ensures that the governing equations are approximately fulfilled.
The second part $\mathcal{B}_q^n$ weakly enforces the boundary conditions.
The third part $\mathcal{H}_q^n$ weakly enforces the correction functions to match the Hermite solution.
{\color{black} Note that the scaling of each part is determined by a dimensional analysis and is detailed in Remark~\ref{rem:dim_analysis}.}

Each part of the functional $J_q^n$ is computed over different domains. 
The electromagnetic fields are required to approximately satisfy Maxwell's equations in the local patch of $J_q^n$.  
The space-time domain of the governing equations functional $\mathcal{G}_q^n$ then encloses 
	the $q^{\text{th}}$ CF node, 
	the domains of the boundary functional $\mathcal{B}_q^n$ and the Hermite functional $\mathcal{H}_q^n$.
The domain of $\mathcal{B}_q^n$ encloses the part of the boundary close to the $q^{\text{th}}$ CF node. 
Finally,
	the domain of $\mathcal{H}_q^n$ encloses the space-time domains of the closest primal Hermite and dual Hermite nodes
	to the $q^{\text{th}}$ CF node. 

{\color{black} As an example, 
	let us consider that the left boundary is located at $x_{\ell}$ between the dual node $x_{1/2}$ 
	and the primal node $x_1$, 
    so the zeroth CF node ($q=0$) is associated with $x_1$.}
In this situation, 
	we use the correction function method to update 
	the numerical approximation to the electromagnetic fields 
	and their $m$ first derivatives located at the primal CF node $x_1$ at a given time $t_n$.  
    
The governing equations functional $\mathcal{G}_0^n$ contains the residual of Maxwell's equations and is integrated over the space-time domain
	$S_0\times[t_{n-1},t_n]$.
Here the space interval is $S_0 = [x_\ell, x_{5/2}]$.
The governing equations functional is then given by 
\begin{equation}
	\mathcal{G}_0^n(H^n_{h,0},E^n_{h,0}) =  \frac{\ell_0}{2} \,\int\limits_{t_{n-1}}^{t_n} \int\limits_{S_0} (\mu\,\partial_t H^n_{h,0}+\partial_x E^n_{h,0})^2 + Z^2 (\epsilon\,\partial_t E^n_{h,0} + \partial_x H^n_{h,0})^2\,\mathrm{d}x\,\mathrm{d}t.
\end{equation}
Here $Z = \sqrt{\mu/\epsilon}$ is the impedance and $\ell_0 = x_{5/2}-x_\ell$ is the characteristic length of the local patch,
	and $H^n_{h,0}$ and $E^n_{h,0}$ are the polynomials approximating the electromagnetic fields in the local patch 
	that we seek. 
$H^n_{h,0}$ and $E^n_{h,0}$ are referred to as the correction functions.
The integration domain of $\mathcal{G}^n_0$ is illustrated in Fig.~\ref{fig:local_patch_1D_x0}.
\begin{figure}
 	\centering
	\includegraphics[width=3.0in]{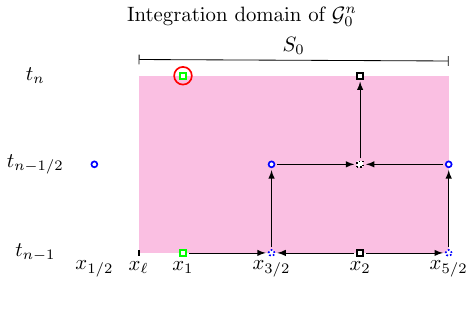}
	\caption{Illustration of the domain of integration $S_0\times [t_{n-1},t_n]$ of $\mathcal{G}_0^n$. 
		   The primal CF and Hermite nodes are respectively represented by green squares and black squares 
		   while the dual Hermite nodes are represented by blue circles.
		   The CFM seeks the information located at $(x_1,t_{n})$ which is enclosed by the red circle.
		   The space-time local patch $S_0\times [t_{n-1},t_n]$ is denoted by a magenta box.}
	\label{fig:local_patch_1D_x0}
\end{figure}

The boundary functional $\mathcal{B}_0^n$ contains the residual of the boundary condition at $x_\ell$ and 
	is integrated over the time interval $[t_{n-1},t_{n}]$.
We then have 
\begin{equation}
	\mathcal{B}_0^n(E^n_{h,0}) = \frac{1}{2} \, \int\limits_{t_{n-1}}^{t_n} (E^n_{h,0}(x_\ell,t)-g_\ell(t))^2\,\mathrm{d}t.
\end{equation}
The integration domain of $\mathcal{B}_0^n$ is illustrated in Fig.~\ref{fig:boundary_1D_x0}.
\begin{figure}
 	\centering
	\includegraphics[width=3.0in]{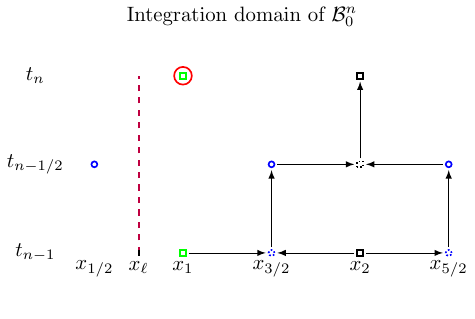}
	\caption{Illustration of the domain of integration $[t_{n-1},t_n]$ at $x_\ell$ of $\mathcal{B}_0^n$. 
		   The primal CF and Hermite nodes are respectively represented by green squares and black squares 
		   while the dual Hermite nodes are represented by blue circles.
		   The CFM seeks the information located at $(x_1,t_{n})$ which is enclosed by the red circle.
		   The intersection between the boundary and the local patch,
			that is the line connecting $(x_\ell,t_{n-1})$ to $(x_\ell,t_n)$,  
			is denoted by a dashed purple line.}	
	\label{fig:boundary_1D_x0}
\end{figure}
{\color{black} Unfortunately, 
    as shown in the numerical results of Section \ref{sec:numerical_examples}, 
the enforcement of only the boundary condition \eqref{eq:boundary_condition}
    leads either to an unstable method or a severe restriction on the CFL constant.
Although the CFM, 
    in general, 
    impacts the stability of the original method, 
    the numerical results reported in \cite{LawNave2021,LawNave2022} for finite-difference methods and 
    \cite{LawAppelo2023} for Hermite methods (limited to boundaries aligned with the mesh nodes) did not suffer from a significant CFL number reduction.

For embedded boundary and interface problems, 
    the main difference between the method proposed here and previous works on finite-difference methods is that 
    the original method is coupled to the CFM with not only the values of the correction functions but also their spatial derivatives through order $m$. 
{\color{black} This motivates us to add additional constraints on the spatial derivatives of the correction functions at the boundary using compatibility conditions for improving the stability of the overall method.
 Compatibility conditions were used to enforce boundary and interface conditions in high-order finite-difference methods for Navier-Stokes equations \cite{Henshaw1994}, 
conservation laws \cite{Tan2010}, 
wave equation \cite{Appelo2012} 
and
Maxwell's equations \cite{Zhao2004,Henshaw2006,Angel2019,Banks2020}, 
to name a few.
Recently, 
    this approach was also used to enforce boundary conditions when  dissipative and conservative Hermite methods are used for the scalar wave equation.
For the CFM, 
    we propose to use the time derivatives of boundary conditions 
    and convert them into spatial derivatives,
    similar to what is done for inverse Lax-Wendroff methods \cite{Tan2010} for conservation laws.
Note that, 
    in this situation, 
    we do not use the compatibility conditions to obtain solvable linear systems that enforce the boundary (or interface) conditions (see Propositions \ref{prop:global_minimizer_boundary} and \ref{prop:global_minimizer_interface}),
    but rather to improve the stability of the proposed Hermite-Taylor correction function method. 
} 

Taking time derivatives of the left boundary condition lead to 
\begin{equation}
    \partial_t^{j} E(x_\ell,t) = \partial_t^{j} g_\ell(t). 
\end{equation}
Using now the 1-D simplification of Maxwell's equations \eqref{eq:Maxwell_1D} to convert the time derivatives of the electric field into spatial derivatives, 
    we obtain 
\begin{equation} \label{eq:convert_time_derivatives_into_spatial_1D}
    \partial_t^{j} E = \left.\Bigg\{ 
    \begin{aligned}
        -\partial_x^{j}H/(\epsilon^\theta\,\mu^{\theta -1}), \qquad &\mbox{if } j \mbox{ odd}, \\
        \partial_x^{j}E/(\epsilon\,\mu)^{j/2}, \qquad &\mbox{otherwise}.
    \end{aligned}\right.
\end{equation}
Here $\theta = (j+1)/2$.
Thus, 
    the boundary functional becomes 
\begin{equation} \label{eq:constraint_derivatives_1D}
    \mathcal{B}_0^n(E^n_{h,0}) = \frac{1}{2} \, \sum_{j=0}^{N_d}\, \bigg(\frac{\ell_0}{c}\bigg)^{2j}\int\limits_{t_{n-1}}^{t_n} (\partial_t^{j}E^n_{h,0}(x_\ell,t)-\partial_t^{j}g_\ell(t))^2\,\mathrm{d}t.
\end{equation}
Here $c = 1/\sqrt{\epsilon\,\mu}$ is the wave speed, 
    and $N_d$ is the maximum order of the derivatives that are considered. 
}

The Hermite functional 
\begin{equation}
	\mathcal{H}_0^n = \mathcal{H}_{p,0}^n + \mathcal{H}_{d,0}^n,
\end{equation}
	weakly enforces the correction function to match the Hermite solution.
The first part $\mathcal{H}_{p,0}^n$ weakly enforces the correction function to match the Hermite-Taylor 
	polynomial associated 
	with the primal Hermite node $x_2$ in the space-time domain
	$S_{0,p}^\mathcal{H} \times[t_{n-1/2},t_n]$.
Here the space interval $S_{0,p}^\mathcal{H} = [x_{3/2},x_{5/2}]$.
We obtain 
\begin{equation}
	\mathcal{H}_{p,0}^n(H^n_{h,0},E^n_{h,0}) = \frac {1}{2} \frac{c_H}{\Delta x}\int\limits_{t_{n-\frac {1}{2}}}^{t_n}\int\limits_{S_{0,p}^{\mathcal{H}}} Z^2 (H^n_{h,0}-H^*)^2 + (E^n_{h,0}-E^*)^2\,\mathrm{d}x\,\mathrm{d}t.
\end{equation}
Here $c_H >0$ is a given penalization parameter, 
	and $H^*$ and $E^*$ are Hermite-Taylor polynomials.
The second term $\mathcal{H}_{d,0}^n$ weakly enforces the Hermite-Taylor polynomial associated with the dual Hermite node 
	$x_{3/2}$ in the space-time domain $S_{0,d}^\mathcal{H} \times [t_{n-1},t_{n-1/2}]$.
Here the space interval is $S_{0,d}^\mathcal{H} = [x_{1},x_{2}]$.
We then have  
\begin{equation}
\mathcal{H}_{d,0}^n(H^n_{h,0},E^n_{h,0}) = \frac {1}{2} \frac{c_H}{\Delta x}\int\limits_{t_{n-1}}^{t_{n-\frac {1}{2}}}\int\limits_{S_{0,d}^{\mathcal{H}}} Z^2 (H^n_{h,0}-H^*)^2 + (E^n_{h,0}-E^*)^2\,\mathrm{d}x\,\mathrm{d}t.
\end{equation}
The integration domain of $\mathcal{H}_0^n$ is illustrated in Fig.~\ref{fig:HermiteTaylor_patch_1D_x0}.
\begin{figure}
 	\centering
	\includegraphics[width=3.0in]{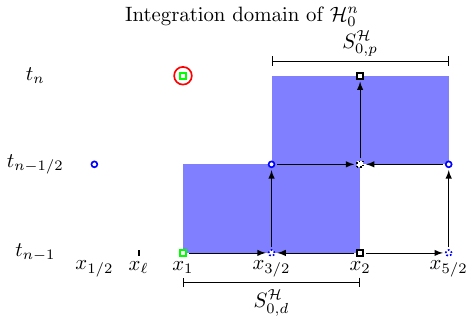}
       \caption{Illustration of the domains of integration $S_{0,p}^\mathcal{H} \times[t_{n-1/2},t_n]$ and $S_{0,d}^\mathcal{H} \times[t_{n-1},t_{n-1/2}]$ of $\mathcal{H}_0^n$. 
		   The primal CF and Hermite nodes are respectively represented by green squares and black squares 
		   while the dual Hermite nodes are represented by blue circles.
		   The CFM seeks the information located at $(x_1,t_{n})$ which is enclosed by the red circle.
		   The domains $S_{0,p}^\mathcal{H} \times [t_{n-1/2},t_n]$ and $S_{0,d}^\mathcal{H} \times [t_{n-1},t_{n-1/2}]$,
			where we enforce the correction functions to match the Hermite-Taylor polynomials,
			are denoted by blue boxes.} 
	\label{fig:HermiteTaylor_patch_1D_x0}
\end{figure}
The procedure described above can be easily adapted to weakly enforce the boundary condition at $x_r$.

{\color{black}
\begin{remark} \label{rem:dim_analysis}
The choice of the scaling of the different parts of the functional $J_q^n$ is dictated by a dimensional analysis. 
In other words, 
    the different terms share all the same dimensional unit. 
Consider  
\begin{equation}
    \mathbold{x} = L_0 \hat{\mathbold{x}}, \quad t = \frac{L_0}{c_0}\hat{t}, \quad \mathbold{E} = H_0 Z_0 \hat{\mathbold{E}}, \quad \mathbold{H} = H_0 \hat{\mathbold{H}}, \quad \mu^{\pm} = \mu_0 \hat{\mu}^{\pm}, \quad \epsilon^{\pm} = \epsilon_0 \hat{\epsilon}^{\pm},
\end{equation}
where $\hat{\mathbold{x}}$ represents a non-dimensional variable associated with $\mathbold{x}$,
    $L_0$ is the reference length, 
    $H_0$ is the reference magnetic strength, 
    $c_0 = 1/\sqrt{\epsilon_0\mu_0}$ is the dimensional speed of light in free space, 
    and $Z_0 = \sqrt{\mu_0/\epsilon_0}$. 
Substituting these in the different terms of the functional $J_q^n$ 
    shows that they share the same dimensional unit, that is $Z_0^2H_0^2L_0^d/c_0$ in $d$ space dimensions.
\end{remark}
}
\subsubsection{The Linear System of Equations that Solves the Optimization Problem}

For each CF node, 
	we solve the following minimization problem 
	\begin{equation} \label{eq:minPblm_1D}
		\begin{aligned}
			&\text{Find } (H^n_{h,q},E^n_{h,q}) \in V \times V \text{ such that }\\
 			&\qquad (H^n_{h,q},E^n_{h,q}) =  \underset{v,w\in V}{\arg\min}\, J_q^n(v,w).
		\end{aligned}
	\end{equation}
Here $V = \mathbb{Q}^k\big(S_q\times [t_{n-1},t_n] \big)$ is the space of tensor-product polynomials of degree at most $k$ in each variable,
	$n=1,\dots,N_t$ and $q=0,1$ in our 1-D example. 
We use space-time Legendre polynomials as basis functions of $V$. 
To solve the minimization problem \eqref{eq:minPblm_1D}, 
	we compute the gradient of $J_q^n$ with respect to the coefficients of the polynomials $H^n_{h,q}$ and $E^n_{h,q}$
	and use that it vanishes at a minimum.
We then obtain the linear system of equations
\begin{equation}
	M_q^n\,\mathbold{c}_q^n = \mathbold{b}_q^n.
\end{equation}
Here $M_q^n$ is a square matrix of dimension $2\,(k+1)^2$ and 
	$\mathbold{c}_q^n$ is a vector containing the polynomials coefficients.
Note that the dimension of the matrices $M_q^n$ is independent of the mesh size.

Since the boundary is invariant in time, 
	we have $M_q = M_q^n$ for all $n$ and therefore obtain one matrix per CF node.
The matrices, $M_q$,
	their scaling and their LU factorization are all precomputed.
{\color{black} Note that the matrices $M_q$ are scaled using row and column scalings. The scaling matrices are computed using only the block diagonal of $M_q$, where the diagonal blocks are $(k+1)^{d+1}\times(k+1)^{d+1}$ matrices in $d$ space dimensions.}
For each time step,
	we then have to compute the right-hand side $\mathbold{b}_q^n$, 
	perform forward and backward substitutions to find $\mathbold{c}_q^n$,
	and update the numerical solution of the electromagnetic fields and their
	first $m$ derivatives at the $q^{\text{th}}$ CF node using $H^n_{h,q}$ and $E^n_{h,q}$.
This can be done independently for each $q$.

\begin{remark}
	A similar procedure can be done to update the data located at dual CF nodes. 
    However,
        the electromagnetic fields and 
		their first $m$ space derivatives at time $t_{-1/2}$ 
        are required.
    In this work, 
        we consider that these data are provided.
    Note that it would also be possible to use Hermite-Taylor polynomials obtained with the initial data at $t_0$ to estimate the required data at $t_{-1/2}$.
\end{remark}

Assume that there are CF nodes on the primal and dual meshes.  
Given the numerical solution on the primal mesh at $t_{n-1}$ and on the dual mesh at $t_{n-3/2}$,
	the algorithm to evolve the numerical solution at $t_n$ is 
\begin{itemize}
	\item[1.] Update the numerical solution on the dual Hermite node at $t_{n-1/2}$ using the Hermite-Taylor method and 
		      store the Hermite-Taylor polynomials needed for the CFM;
	\item[2.] Update the numerical solution on the dual CF nodes at $t_{n-1/2}$ using the CFM by computing $\mathbold{b}_q^{n-1/2}$ and solving for $\mathbold{c}_q^{n-1/2}$;
	\item[3.] Update the numerical solution on the primal Hermite node at $t_{n}$ using the Hermite-Taylor method and 
		      store the Hermite-Taylor polynomials needed for the CFM;
	\item[4.] Update the numerical solution on the primal CF nodes at $t_n$ using the CFM by computing $\mathbold{b}_q^{n}$ and solving for $\mathbold{c}_q^{n}$.
\end{itemize}

\subsection{Interface in One Dimension} \label{sec:interface_1D}

Let us now consider 1-D interface problems. 
In addition to Maxwell's equations \eqref{eq:Maxwell_1D}, 
	we consider the interface conditions
\begin{equation}
	\llbracket H \rrbracket = 0 \qquad \mbox{and} \qquad \llbracket E \rrbracket = 0,
\end{equation}
	on the interface $\Gamma$ located at $x_\Gamma$.
We define the subdomains $\Omega^+ = [x_\ell, x_\Gamma]$ and $\Omega^- = [x_\Gamma,x_r]$.
The physical parameters $\mu$ and $\epsilon$ are assumed to be piecewise constant. 
In this situation, 
	we seek approximations of the electromagnetic fields in each subdomain for a given CF node.
For each CF node, 
	we then compute $H^{+,n}_{h}$ and $E^{+,n}_{h}$ approximating the electromagnetic fields in $\Omega^+$, 
	and $H^{-,n}_{h}$ and $E^{-,n}_{h}$ approximating the electromagnetic fields in $\Omega^-$.
	
For the $q^{\text{th}}$ CF node at time $t_n$, 
	we define a functional 
\begin{equation} \label{eq:functional_interface_1D}
	J_q^n = \mathcal{G}_q^{+,n} + \mathcal{G}_q^{-,n} + \mathcal{I}_q^n + \mathcal{H}_q^{+,n}  + \mathcal{H}_q^{-,n}.
\end{equation} 
The governing equations functionals $\mathcal{G}_q^{+,n}$ and $\mathcal{G}_q^{-,n}$ ensure that Maxwell's equations 
	in respectively $\Omega^+$ and $\Omega^-$ are approximately fulfilled.
The interface functional $\mathcal{I}_q^n$ weakly enforces the interface conditions. 
The Hermite functional $\mathcal{H}_q^{+,n}$ weakly enforces the correction functions $H^{+,n}_{h,q}$ and $E^{+,n}_{h,q}$ 
	to match the Hermite solution in $\Omega^+$ while $\mathcal{H}_q^{-,n}$ weakly enforces the correction functions 
	$H^{-,n}_{h,q}$ and $E^{-,n}_{h,q}$ to match the Hermite solution in $\Omega^-$.

As for embedded boundary problems,
	each part of the functional $J_q^n$ is computed in different domains. 
The domains of $\mathcal{G}_q^{+,n}$ and $\mathcal{G}_q^{-,n}$ enclose the $q^{\text{th}}$ CF node, 
	the domains of the interface functional and the Hermite functionals.
This then defines the local patch of the $q^{\text{th}}$ CF node. 
The interface functional $\mathcal{I}_q^n$ encloses the part of the interface close to the $q^{\text{th}}$ CF node. 
The Hermite functional $\mathcal{H}_q^{+,n}$ encloses the space-time domains 
	of the closest primal Hermite and dual Hermite nodes in  $\Omega^+$ to the $q^{\text{th}}$ CF node.
Finally, 
	the Hermite functional $\mathcal{H}_q^{-,n}$ encloses the domains 
	of the closest primal Hermite and dual Hermite nodes in  $\Omega^-$ to the $q^{\text{th}}$ CF node.

{\color{black}
As an example, 
	we assume an interface $\Gamma$ located at $x_\Gamma$ between the dual node $x_{i+1/2}$ and 
	the primal node $x_{i+1}$.
In this situation, 
	there are two CF nodes, 
	one primal CF node located at $x_{i+1}$ and one dual CF node located at $x_{i+1/2}$.
We now focus on the primal CF node which we assume corresponds to the zeroth CF node ($q=0$). 

The governing equations functional $\mathcal{G}_0^{+,n}$ contains the residual of Maxwell's equations 
with the parameters from $\Omega^+$ and is integrated over the domain $S_0\times[t_{n-1},t_n]$.
Here the space interval $S_0 = [x_{i-1},x_{i+5/2}]$.
We have 
\begin{equation}
	\mathcal{G}_0^{+,n}(H^{+,n}_{h,0},E^{+,n}_{h,0}) =  \frac{\ell_0}{2} \,\int\limits_{t_{n-1}}^{t_n} \int\limits_{S_0} (\mu^+\,\partial_t H^{+,n}_{h,0}+\partial_x E^{+,n}_{h,0})^2 + (Z^{+})^2 (\epsilon^+\,\partial_t E^{+,n}_{h,0} + \partial_x H^{+,n}_{h,0})^2\,\mathrm{d}x\,\mathrm{d}t.
\end{equation}
Here the characteristic length of the local patch is $\ell_0 = x_{i+5/2}- x_{i-1}$.
The governing equations functional $\mathcal{G}_0^{-,n}$ is defined on the same domain as the functional $\mathcal{G}_0^{+,n}$
	but contains the residual of Maxwell's equations with the parameters from $\Omega^-$.
We then have 
\begin{equation}
	\mathcal{G}_0^{-,n}(H^{-,n}_{h,0},E^{-,n}_{h,0}) = \frac{\ell_0}{2} \,\int\limits_{t_{n-1}}^{t_n} \int\limits_{S_0} (\mu^-\,\partial_t H^{-,n}_{h,0}+\partial_x E^{-,n}_{h,0})^2 
 + (Z^{-})^2 (\epsilon^-\,\partial_t E^{-,n}_{h,0} + \partial_x H^{-,n}_{h,0})^2\,\mathrm{d}x\,\mathrm{d}t.
\end{equation}
The domain of integration of the governing equations functionals $\mathcal{G}_0^{+,n}$ and $\mathcal{G}_0^{-,n}$ is shown 
	in Fig.~\ref{fig:local_patch_1D_interface}.
\begin{figure}
 	\centering
	\includegraphics[width=4.75in]{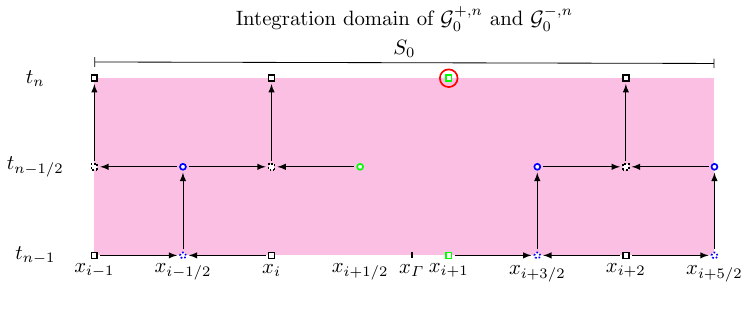}
	\caption{Illustration of the domain of integration $S_0\times [t_{n-1},t_n]$ of $\mathcal{G}_0^{+,n}$ and $\mathcal{G}_0^{-,n}$. 
		   The primal CF and Hermite nodes are respectively represented by green squares and black squares 
		   while the dual CF and Hermite nodes are represented by green circles and blue circles.
		   The CFM seeks the information located at $(x_{i+1},t_{n})$ which is enclosed by the red circle.
		   The space-time local patch $S_0\times [t_{n-1},t_n]$ is denoted by a magenta box.}
	\label{fig:local_patch_1D_interface}
\end{figure}

The interface functional $\mathcal{I}_0^n$ contains the residual of the interface conditions, which in this case we take to be continuity for
$E$ and $H$, and is integrated over 
	the time interval $[t_{n-1},t_n]$.
{\color{black} We then have 
\begin{equation}
\mathcal{I}_0^n(H^{+,n}_{h,0},E^{+,n}_{h,0},H^{-,n}_{h,0},E^{-,n}_{h,0}) = \frac{1}{2} \, \sum_{j=0}^{N_d}\, \bigg(\frac{\ell_0}{c}\bigg)^{2j} \int\limits_{t_{n-1}}^{t_n} \bar{Z}^2 \llbracket \partial_t^j H^{n}_{h,0}(x_\Gamma,t) \rrbracket^2 + \llbracket \partial_t^j E^{n}_{h,0}(x_\Gamma,t) \rrbracket^2\,\mathrm{d}t.
\end{equation}
Here 
\begin{equation} \label{eq:convert_time_derivatives_into_spatial_1D_H}
    \partial_t^{j} H = \left.\Bigg\{ 
    \begin{aligned}
        -\partial_x^{j}E/(\epsilon^{\theta -1}\,\mu^\theta), \qquad &\mbox{if } j \mbox{ odd}, \\
        \partial_x^{j}H/(\epsilon\,\mu)^{j/2}, \qquad &\mbox{otherwise}.
    \end{aligned}\right.
\end{equation}
Note that the interface functional couples the electromagnetic fields from the different subdomains at the interface and that we convert the time derivatives of the electromagnetic fields into spatial derivatives using \eqref{eq:convert_time_derivatives_into_spatial_1D} and \eqref{eq:convert_time_derivatives_into_spatial_1D_H} to improve the stability of the Hermite-Taylor correction function method, 
as shown in the numerical results of Section \ref{sec:numerical_examples}.} We can take $\bar{Z}=(Z^{+}+Z^{-})/2$ or the values from the left or right as convenient. 
The integration domain of $\mathcal{I}_0^n$ is illustrated in Fig.~\ref{fig:boundary_1D_interface}.
\begin{figure}
 	\centering
	\includegraphics[width=4.75in]{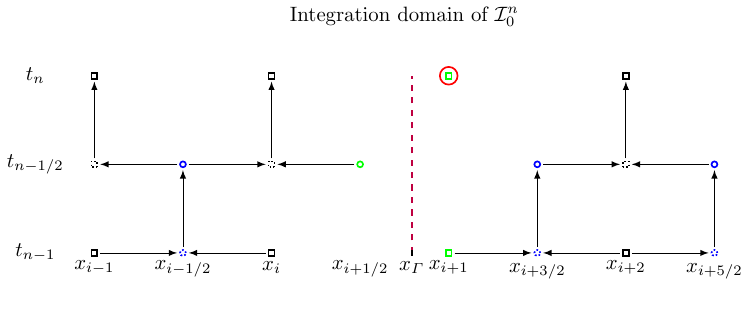}
	\caption{Illustration of the domain of integration $[t_{n-1},t_n]$ at $x_\Gamma$ of $\mathcal{I}_0^n$. 
		   The primal CF and Hermite nodes are respectively represented by green squares and black squares 
		   while the dual CF and Hermite nodes are represented by green circles and blue circles.
		   The CFM seeks the information located at $(x_{i+1},t_{n})$ which is enclosed by the red circle.
		   The intersection between the interface and the local patch,
			that is the line connecting $(x_{\Gamma},t_{n-1})$ to $(x_\Gamma,t_n)$,  
			is denoted by a dashed purple line.}	
	\label{fig:boundary_1D_interface}
\end{figure}

The Hermite functional $\mathcal{H}_0^{+,n}$ weakly enforces the correction functions 
	$H^{+,n}_{h,0}$ and $E^{+,n}_{h,0}$ to match the Hermite solution in $\Omega^+$ 
	over the domains $S_{p,0}^{\mathcal{H}^+}\times [t_{n-1/2},t_n]$ and $S_{d,0}^{\mathcal{H}^+}\times [t_{n-1},t_{n-1/2}]$.
Here $S_{p,0}^{\mathcal{H}^+} = [x_{i+3/2},x_{i+5/2}]$ is the space interval of the primal Hermite node $x_{i+2}$
	and  $S_{d,0}^{\mathcal{H}^+} = [x_{i+1},x_{i+2}]$ is the space interval of the dual Hermite node $x_{i+3/2}$.
We then have 
\begin{equation}
	\mathcal{H}_0^{+,n} = \mathcal{H}_{p,0}^{+,n}+\mathcal{H}_{d,0}^{+,n},
\end{equation}
	with
\begin{equation}
\begin{aligned}
	\mathcal{H}_{p,0}^{+,n}(H^{+,n}_{h,0},E^{+,n}_{h,0}) =&\,\, \frac {1}{2} \frac{c_H}{\Delta x}\int\limits_{t_{n-\frac {1}{2}}}^{t_n}\int\limits_{S_{0,p}^{\mathcal{H}^+}} (Z^{+})^2 (H^{+,n}_{h,0}-H^*)^2 + (E^{+,n}_{h,0}-E^*)^2\,\mathrm{d}x\,\mathrm{d}t, \\
	\mathcal{H}_{d,0}^{+,n}(H^{+,n}_{h,0},E^{+,n}_{h,0}) =&\,\, \frac {1}{2} \frac{c_H}{\Delta x}\int\limits_{t_{n-1}}^{t_{n-\frac {1}{2}}}\int\limits_{S_{0,d}^{\mathcal{H}^+}} (Z^{+})^2 (H^{+,n}_{h,0}-H^*)^2 + (E^{+,n}_{h,0}-E^*)^2\,\mathrm{d}x\,\mathrm{d}t.
\end{aligned}
\end{equation}
Finally, 
	the Hermite functional $\mathcal{H}_0^{-,n}$ weakly enforces the correction functions $H^{-,n}_{h,0}$
	and $E^{-,n}_{h,0}$ to match the Hermite solution in $\Omega^-$ over the domains 
	$S_{p,0}^{\mathcal{H}^-}\times [t_{n-1/2},t_n]$ and $S_{d,0}^{\mathcal{H}^-}\times [t_{n-1},t_{n-1/2}]$.
Here the space intervals $S_{p,0}^{\mathcal{H}^-} = [x_{i-1/2},x_{i+1/2}]$ and $S_{d,0}^{\mathcal{H}^-} = [x_{i-1},x_i]$.
We obtain 
\begin{equation}
	\mathcal{H}_0^{-,n} = \mathcal{H}_{p,0}^{-,n}+\mathcal{H}_{d,0}^{-,n},
\end{equation}
	with
\begin{equation}
\begin{aligned}
	\mathcal{H}_{p,0}^{-,n}(H^{-,n}_{h,0},E^{-,n}_{h,0}) =&\,\, \frac {1}{2} \frac{c_H}{\Delta x}\int\limits_{t_{n-\frac {1}{2}}}^{t_n}\int\limits_{S_{0,p}^{\mathcal{H}^-}} (Z^{-})^2 (H^{-,n}_{h,0}-H^*)^2 + (E^{-,n}_{h,0}-E^*)^2\,\mathrm{d}x\,\mathrm{d}t, \\
	\mathcal{H}_{d,0}^{-,n}(H^{-,n}_{h,0},E^{-,n}_{h,0}) =&\,\, \frac {1}{2} \frac{c_H}{\Delta x} \int\limits_{t_{n-1}}^{t_{n-\frac {1}{2}}}\int\limits_{S_{0,d}^{\mathcal{H}^-}} (Z^{-})^2 (H^{-,n}_{h,0}-H^*)^2 + (E^{-,n}_{h,0}-E^*)^2\,\mathrm{d}x\,\mathrm{d}t.
\end{aligned}
\end{equation}
\begin{figure}
 	\centering
	\includegraphics[width=4.75in]{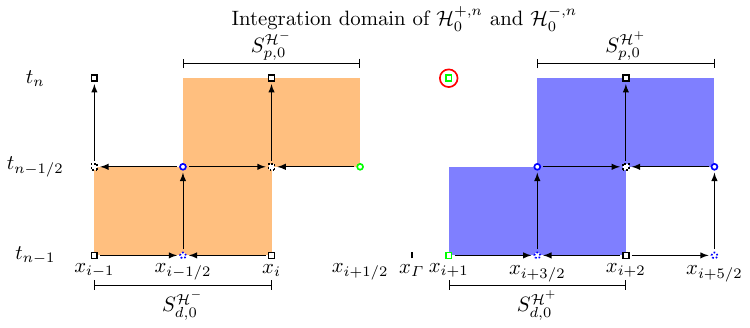}
       \caption{Illustration of the domains of integration $S_{0,p}^{\mathcal{H}^+} \times[t_{n-1/2},t_n]$, 
       		$S_{0,d}^{\mathcal{H}^+} \times[t_{n-1},t_{n-1/2}]$, 
       		$S_{0,p}^{\mathcal{H}^-} \times[t_{n-1/2},t_n]$ and $S_{0,d}^{\mathcal{H}^-} \times[t_{n-1},t_{n-1/2}]$ of 
		$\mathcal{H}_0^{+,n}$ and $\mathcal{H}_0^{-,n}$. 	   
		   The primal CF and Hermite nodes are respectively represented by green squares and black squares 
		   while the dual CF and Hermite nodes are represented by green circles and blue circles.
		   The CFM seeks the information located at $(x_{i+1},t_{n})$ which is enclosed by the red circle.
		   The domains of $\mathcal{H}_0^{+,n}$ and $\mathcal{H}_0^{-,n}$,
			where we enforce the correction functions to match the Hermite-Taylor polynomials,
			are denoted respectively by blue boxes and orange boxes.}
	\label{fig:HermiteTaylor_patch_1D_interface}
\end{figure}

A similar procedure is used to define the functional $J^{n-1/2}$ associated with the dual CF node $x_{i+1/2}$.
}

\subsubsection{The Linear System of Equations that Solves the Optimization Problem}

For each CF node, 
	we solve the following minimization problem 
	\begin{equation} \label{eq:minPblm_1D_interface}
		\begin{aligned}
			&\text{Find } (H^{+,n}_{h,q},E^{+,n}_{h,q},H^{-,n}_{h,q},E^{-,n}_{h,q}) \in V \times V \times V \times V \text{ such that }\\
 			&\qquad (H^{+,n}_{h,q},E^{+,n}_{h,q},H^{-,n}_{h,q},E^{-,n}_{h,q}) =  \underset{v^+,w^+,v^-,w^-\in V}{\arg\min}\, J_q^n(v^+,w^+,v^-,w^-).
		\end{aligned}
	\end{equation}
We solve the minimization problem \eqref{eq:minPblm_1D_interface} using a procedure similar to that of the 
	embedded boundary case.
We therefore have the same properties as before, 
	except that the dimension of the resulting linear system becomes $4\,(k+1)^2$.
The algorithm of the Hermite-Taylor correction function method to evolve the numerical solution remains the same as 
	for the embedded boundary case. 

\subsection{Multi-Dimensional Case}

In this subsection, 
	we extend the Hermite-Taylor correction function method to two and three dimensions for embedded boundary 
	and interface problems.

\subsubsection{Computation of the Local Patches}

In the multi-dimensional case, 
	the time component of the local patches remains the same for all CF nodes while 
	their spatial components are adapted to the geometry of the boundary (or interface).
The spatial component $S_q$ of the local patch associated with the $i^{\text{th}}$ CF node needs to satisfy the following three constraints:
\begin{itemize}
	\item[1.] The $i^{\text{th}}$ CF node must be inside $S_q$; 
	\item[2.] The part of the boundary (or interface) closest to the $i^{\text{th}}$ CF node must be included in $S_q$; 
	\item[3.] The cells of the Hermite nodes closest to the $i^{\text{th}}$ CF node must be included in $S_q$.
\end{itemize}
To reduce the number of minimization problems, 
    we base the construction of the local patches on a parametrization of the boundary or interface and associate multiple CF nodes to a local patch. 
{\color{black} In the following, 
    the subscript $q$ is now associated with the $q^{\text{th}}$ local patch that contains multiple CF nodes.}
    
Focusing in detail on two space dimensions for simplicity,  
	we define the dimension of the spatial component $S_q$ to be $\beta\,h \times \beta\,h$.
Here $h=\Delta x = \Delta y$ is the mesh size and $\beta$ is a given positive constant that depends on the geometry of the boundary (or interface). 
Since the update of the numerical solution with the Hermite-Taylor method for one half time step uses a five-node stencil,
	we therefore have one layer of CF nodes inside the domain along the boundary.
As for the interface case,
	we have one layer of CF nodes inside each subdomain along the interface.

To construct the local patches, 
    consider $\Gamma$ to be a parametrized curve with respect to the parameter $s \in [s_a, s_b]$.
We want to find the nodes $s_q$, 
    discretizing $\Gamma$.
The arc length $\Delta L_\Gamma$ between two nodes $s_q$ and $s_{q+1}$ on $\Gamma$ is 
\begin{equation}
    \Delta L_\Gamma = \int_{s_q}^{s_{q+1}} \sqrt{\bigg(\frac{dx}{ds}\bigg)^2+\bigg(\frac{dy}{ds}\bigg)^2} ds.
\end{equation}
Starting at $s_0 = s_a$, 
    requiring $\Delta L_\Gamma = \alpha \, h$ and approximating the integral with the trapezoidal rule, 
    the remaining nodes $s_q$ are approximated recursively using the secant method.
Here $\alpha$ is a positive constant.

In the vicinity of each node $s_q$ a local patch, $S_q$, is constructed.  
    We first find the closest primal CF node to $(x(s_q),y(s_q))$ and center $S_q$ at the spatial coordinate of this primal CF node,
    denoted as $\mathbold{x}_{s_q}$.
The spatial component of the resulting local patch then encloses 
	the closest part of the boundary (or interface) to the primal CF node located at $\mathbold{x}_{s_q}$ as well as its closest Hermite cells.
For each of the remaining primal and dual CF nodes, 
    we find $q$ such that the distance to $\mathbold{x}_{s_q}$ is minimized and associate the corresponding local patch $S_q$ to it.
For a reasonable value of $\alpha$, 
    the requirements of the spatial component of the local patch should be satisfied for all CF nodes within a local patch. 
Once the space-time domains of the local patches are computed, 
	it is easy to identify the primal and dual Hermite nodes located inside the local patches and 
	compute the space-time regions for the Hermite functionals. 
There are two functionals $\mathcal{J}$ associated with a space-time local patch, 
    one for the primal CF nodes and one for the dual CF nodes.
Note that the difference is due to the time integration of the Hermite functionals $\mathcal{H}_p$ and $\mathcal{H}_d$. 
Fig.~\ref{fig:local_patch_embedded_boundary_2D} and Fig.~\ref{fig:local_patch_interface_2D} illustrate 
	examples of a local patch with $\beta = 5$ for respectively an embedded boundary problem and an interface problem.

\begin{figure}   
	\centering
	\begin{adjustbox}{max width=1.0\textwidth,center}
		\includegraphics[width=2.5in]{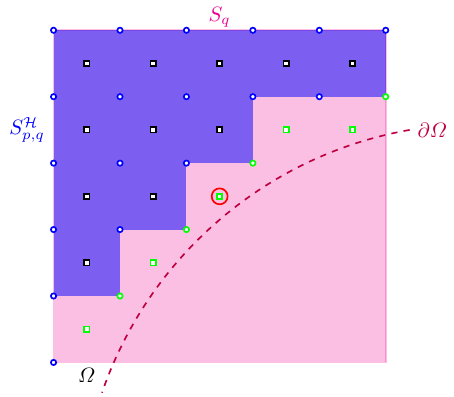}
	   	\includegraphics[width=2.5in]{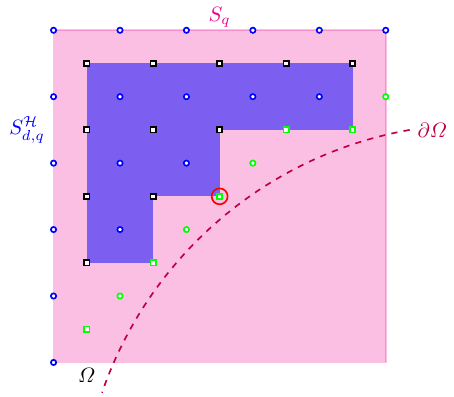}
	\end{adjustbox} 
       \caption{Illustration of a 2-D local patch for an embedded boundary problem. 
       The local patch is associated with the node $s_q$ enclosed by the black circle and centered at the primal CF node enclosed by the dashed red diamond.
       The primal CF and Hermite nodes are respectively represented by green squares and black squares 
		   while the dual CF and Hermite nodes are represented by green circles and blue circles.
	  The CFM seeks the information located at the CF nodes enclosed by a red diamond or circle.
		   The spatial domain $S_q$ of the local patch is denoted by a magenta box.
		   The boundary $\partial \Omega$ is denoted by a dashed purple line.
		   The domains $S_{p,q}^{\mathcal{H}}$ and $S_{d,q}^{\mathcal{H}}$ where we enforce the correction functions to match the Hermite-Taylor polynomials
			are denoted by blue boxes.
		   The left plot is for the spatial components of the local patch over the time interval $[t_{n-1/2},t_{n}]$ while the right plot is for $[t_{n-1},t_{n-1/2}]$ when we update primal CF nodes at $t_n$.
     For the update of dual CF nodes at $t_{n+1/2}$, 
        the time intervals for the left and right plots are respectively 
        $[t_{n-1/2},t_{n}]$ and $[t_{n},t_{n+1/2}]$.}
       \label{fig:local_patch_embedded_boundary_2D}
\end{figure}
\begin{figure}   
	\centering
	\begin{adjustbox}{max width=1.0\textwidth,center}
		\includegraphics[width=2.5in]{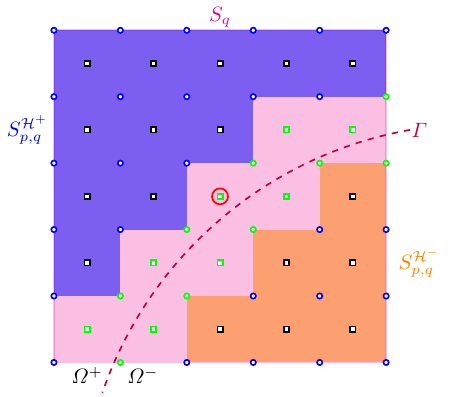}
	   	\includegraphics[width=2.5in]{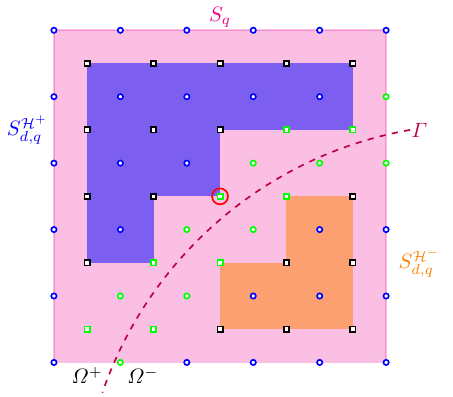}
	\end{adjustbox} 
       \caption{Illustration of a 2-D local patch for an interface problem. The local patch is associated with the node $s_q$ enclosed    by the black circle and centered at the primal CF node enclosed by the   dashed red diamond.
       		   The primal CF and Hermite nodes are respectively represented by green squares and black squares 
		   while the dual CF and Hermite nodes are represented by green circles and blue circles.
		   The CFM seeks the information located at the CF nodes enclosed by a red diamond or circle. 
		   The spatial domain $S_q$ of the local patch is denoted by a magenta box.
		   The interface $\Gamma$ is denoted by a dashed purple line.
		   The domains $S_{p,q}^{\mathcal{H}^+}$ and $S_{d,q}^{\mathcal{H}^+}$ where we enforce the correction functions 
		   	to match the Hermite-Taylor polynomials in $\Omega^+$ are denoted by blue boxes.
		   The domains $S_{p,q}^{\mathcal{H}^-}$ and $S_{d,q}^{\mathcal{H}^-}$ are denoted by orange boxes.
		   The left plot is for the spatial components of the local patch over the time interval $[t_{n-1/2},t_{n}]$ while the right plot is for $[t_{n-1},t_{n-1/2}]$ when we update primal CF nodes at $t_n$.
     For the update of dual CF nodes at $t_{n+1/2}$, 
        the time intervals for the left and right plots are respectively 
        $[t_{n-1/2},t_{n}]$ and $[t_{n},t_{n+1/2}]$.}
       \label{fig:local_patch_interface_2D}
\end{figure}

\subsubsection{Definition of the Correction Function Functional}

Let us first focus on the embedded boundary case.
For simplicity, 
	we consider that all the CF nodes belong to the primal mesh.
The described procedure below can be easily adapted to define the functional for a dual CF node.  
For each local patch, 
	we define the functional \eqref{eq:functional_boundary} to seek the polynomials $\mathbold{H}_{h,q}^n$ and $\mathbold{E}_{h,q}^n$ 
	approximating the electromagnetic fields in the local patch. 
The governing equations functional becomes 
\begin{equation}
	\begin{aligned}
		\mathcal{G}_q^n(\mathbold{H}^n_{h,q},\mathbold{E}^n_{h,q}) =&\,\,  \frac{\ell_q}{2} \,\int\limits_{t_{n-1}}^{t_n}\int\limits_{S_q} (\mu\,\partial_t \mathbold{H}^n_{h,q} + 
		\nabla\times\mathbold{E}^n_{h,q})\cdot(\mu\,\partial_t \mathbold{H}^n_{h,q} + \nabla\times\mathbold{E}^n_{h,q}) \\
		+&\,\, Z^2 ( \epsilon\,\partial_t \mathbold{E}^n_{h,q} - \nabla\times\mathbold{H}^n_{h,q})\cdot( \epsilon\,\partial_t \mathbold{E}^n_{h,q} - 
		\nabla\times\mathbold{H}^n_{h,q}) \\
		+&\,\,  c^2 (\nabla\cdot (\mu\, \mathbold{H}^n_{h,q}))^2 +\frac {1}{\epsilon^2} (\nabla\cdot (\epsilon\, \mathbold{E}^n_{h,q}))^2\,\mathrm{d}\mathbold{x}\,\mathrm{d}t.
	\end{aligned}
\end{equation}
Here $\ell_q = \beta\,h$ is the characteristic length of the local patch.
The second part of the functional $J_q^n$ that weakly enforces the boundary condition \eqref{eq:boundary_condition} becomes 
\begin{equation}
		\mathcal{B}_q^n(\mathbold{E}^n_{h,q}) = \frac{1}{2} \, \sum_{j=0}^{N_d}\, \bigg(\frac{\ell_q}{c}\bigg)^{2s}\,\int\limits_{t_{n-1}}^{t_n}\int\limits_{\partial\Omega\cap S_q} (\mathbold{n}\times\partial_t^j\mathbold{E}^n_{h,q} - \partial_t^j\mathbold{g})\cdot(\mathbold{n}\times\partial_t^j\mathbold{E}^n_{h,q} - \partial_t^j\mathbold{g})\, \mathrm{d}s\,\mathrm{d}t.
\end{equation}
Here 
\begin{equation}
    \partial_t^{j} \mathbold{E} = \left.\Bigg\{ 
    \begin{aligned}
        \nabla^{j-1}(\nabla\times \mathbold{H})/(\epsilon^{\theta}\,\mu^{\theta-1}), \qquad &\mbox{if } j \mbox{ odd}, \\
        -\nabla^{j-2}(\nabla\times\nabla\times \mathbold{E})/(\epsilon\,\mu)^{j/2}, \qquad &\mbox{otherwise},
    \end{aligned}\right.
\end{equation}
and $\theta = (j+1)/2$.
Finally, 
	the two parts in the Hermite functional $\mathcal{H}_{q}^n$ become
\begin{equation} 
	\begin{aligned}
		\mathcal{H}_{p,q}^n(\mathbold{H}^n_{h,q},\mathbold{E}^n_{h,q}) =&\,\, \frac {1}{2} \frac{c_H}{h} \int\limits_{t_{n-\frac {1}{2}}}^{t_n}\int\limits_{S^\mathcal{H}_{p,q}} Z^2 (\mathbold{H}^n_{h,q}-\mathbold{H}^*)\cdot(\mathbold{H}^n_{h,q}-\mathbold{H}^*) +(\mathbold{E}^n_{h,q}-\mathbold{E}^*)\cdot(\mathbold{E}^n_{h,q}-\mathbold{E}^*)\,\mathrm{d}\mathbold{x}\,\mathrm{d}t, \\
		\mathcal{H}_{d,q}^n(\mathbold{H}^n_{h,q},\mathbold{E}^n_{h,q}) =&\,\, \frac {1}{2}\frac{c_H}{h} \int\limits_{t_{n-1}}^{t_{n-\frac {1}{2}}}\int\limits_{S^\mathcal{H}_{d,q}} Z^2 (\mathbold{H}^n_{h,q}-\mathbold{H}^*)\cdot(\mathbold{H}^n_{h,q}-\mathbold{H}^*) + (\mathbold{E}^n_{h,q}-\mathbold{E}^*)\cdot(\mathbold{E}^n_{h,q}-\mathbold{E}^*)\,\mathrm{d}\mathbold{x}\,\mathrm{d}t.
	\end{aligned}
\end{equation}

As for the interface case,
	the functionals of the governing equations and the Hermite functionals in \eqref{eq:functional_interface_1D} are modified in the same way as the embedded boundary case. 
The interface functional that weakly enforces the interface conditions \eqref{eq:interface_cdns} becomes  
\begin{equation}
	\begin{aligned}
	\mathcal{I}_q^n(\mathbold{H}^{+,n}_{h,q},\mathbold{E}^{+,n}_{h,q},\mathbold{H}^{-,n}_{h,q},\mathbold{E}^{-,n}_{h,q}) =&\,\, \frac{1}{2} \,\sum_{j=0}^{N_d}\, \bigg(\frac{\ell_q}{\bar{c}}\bigg)^{2j}\,\int\limits_{t_{n-1}}^{t_n}\int\limits_{\Gamma\cap S_q} \left( \hat{\mathbold{n}}\times\llbracket\partial_t^j\mathbold{E}^n_{h,q}\rrbracket \right) \cdot \left( \hat{\mathbold{n}}\times\llbracket\partial_t^j\mathbold{E}^n_{h,q} \rrbracket \right) \\
			+&\,\, \bar{Z}^2 \left( \hat{\mathbold{n}}\times\llbracket\partial_t^j\mathbold{H}^n_{h,q}\rrbracket \right) \cdot \left( \hat{\mathbold{n}}\times\llbracket\partial_t^j\mathbold{H}^n_{h,q}\rrbracket \right) \\
			+&\,\,  \frac {1}{\bar{\epsilon}^2} (\hat{\mathbold{n}}\cdot\llbracket \epsilon\,\partial_t^j\mathbold{E}^n_{h,q} \rrbracket)^2 
			+ \bar{c}^2 (\hat{\mathbold{n}}\cdot\llbracket \mu\,\partial_t^j\mathbold{H}^n_{h,q} \rrbracket)^2 \, \mathrm{d}s\,\mathrm{d}t. 
	\end{aligned}
\end{equation}
Here again the barred coefficients can be taken as averages or values from either side of the interface, 
    and 
\begin{equation}
    \partial_t^{j} \mathbold{H} = \left.\Bigg\{ 
    \begin{aligned}
        -\nabla^{j-1}(\nabla\times \mathbold{E})/(\epsilon^{\theta-1}\,\mu^{\theta}), \qquad &\mbox{if } j \mbox{ odd}, \\
        -\nabla^{j-2}(\nabla\times\nabla\times \mathbold{H})/(\epsilon\,\mu)^{j/2}, \qquad &\mbox{otherwise}.
    \end{aligned}\right.
\end{equation}

\subsubsection{The Linear System of Equations that Solves the Optimization Problems}

For each local patch and for each time step, 
	we solve the following minimization problems 
\begin{equation} \label{eq:minPblm_3D}
	\begin{aligned}
		&\text{Find } (\mathbold{H}^n_{h,q},\mathbold{E}^n_{h,q}) \in V \times V \text{ such that }\\
		&\qquad (\mathbold{H}^n_{h,q},\mathbold{E}^n_{h,q}) =  \underset{\mathbold{v},\mathbold{w}\in V}{\arg\min}\, J_q^n(\mathbold{v},\mathbold{w}),
	\end{aligned}
\end{equation}
for the embedded boundary case and 
	\begin{equation} \label{eq:minPblm_3D_interface}
		\begin{aligned}
			&\text{Find } (\mathbold{H}^{+,n}_{h,q},\mathbold{E}^{+,n}_{h,q},\mathbold{H}^{-,n}_{h,q},\mathbold{E}^{-,n}_{h,q}) \in V \times V \times V \times V \text{ such that }\\
 			&\qquad (\mathbold{H}^{+,n}_{h,q},\mathbold{E}^{+,n}_{h,q},\mathbold{H}^{-,n}_{h,q},\mathbold{E}^{-,n}_{h,q}) =  \underset{\mathbold{v}^+,\mathbold{w}^+,\mathbold{v}^-,\mathbold{w}^-\in V}{\arg\min}\, J_q^n(\mathbold{v}^+,\mathbold{w}^+,\mathbold{v}^-,\mathbold{w}^-),
		\end{aligned}
	\end{equation}
	for the interface case.
Here, in general,  
\begin{equation}
	V = \{ \mathbold{v} \in [\mathbb{Q}^k(S_q\times[t_{n-1},t_n])]^3\},
\end{equation} 
with the obvious reduction in dimensions if TM or TE modes are evolved. 
We solve the minimization problems \eqref{eq:minPblm_3D} and \eqref{eq:minPblm_3D_interface} using a 
	procedure similar to that of the one-dimensional case.
We therefore compute the matrices of the linear systems of equations, 
	their scaling and LU factorizations as a pre-computation step. 
The dimension of the matrices is $3\,(k+1)^3$ for problems posed in two space dimensions and $6\,(k+1)^4$ in three space dimensions for the embedded boundary case.
As for the interface case, 
	the dimension of the matrices is $6\,(k+1)^3$ in two space dimensions and $12\,(k+1)^4$ in three space dimensions.
	
For each update of the numerical solution, 
	we therefore need to compute the right-hand side of the linear systems of equations,
	solve for the polynomial coefficients and approximate the electromagnetic fields and 
	the required spatial derivatives at the CF nodes using the correction functions. 
The algorithm of the Hermite-Taylor correction function method to evolve the numerical solution remains the same as 
	the one-dimensional case.

The following propositions guarantee that the minimization problems 
	\eqref{eq:minPblm_3D} and \eqref{eq:minPblm_3D_interface} are well-posed.
\begin{proposition} \label{prop:global_minimizer_boundary}
The minimization problem \eqref{eq:minPblm_3D} has a unique global minimizer. 
\end{proposition}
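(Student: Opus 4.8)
The plan is to exploit the fact that $J_i^n$ is a non-negative quadratic functional on the finite-dimensional space $V\times V$, so that the whole question reduces to the positive definiteness of a single symmetric matrix. Writing $\mathbold{c}$ for the vector of space-time Legendre coefficients of the pair $(\mathbold{H}^n_{h,i},\mathbold{E}^n_{h,i})$, each of the three contributions $\mathcal{G}_i^n$, $\mathcal{B}_i^n$ and $\mathcal{H}_i^n$ is an integral of a sum of squares, so $J_i^n$ takes the form $\tfrac12\,\mathbold{c}^{\!\top} M_i^n\,\mathbold{c} - (\mathbold{b}_i^n)^{\!\top}\mathbold{c} + \mathrm{const}$ with $M_i^n$ symmetric and positive \emph{semi}-definite. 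Existence of a global minimizer is then automatic, and it is unique precisely when $M_i^n$ is positive \emph{definite}. The first step is thus simply to record this reduction together with the normal equations $M_i^n\,\mathbold{c}=\mathbold{b}_i^n$ already derived in the text, which reduces the claim to showing that the associated homogeneous quadratic form vanishes only at $\mathbold{c}=\mathbold{0}$.

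The heart of the argument is therefore to analyze the homogeneous functional $J_{i,0}^n(\mathbold{v},\mathbold{w})$ obtained by setting the data $\mathbold{g}$, $\mathbold{H}^*$ and $\mathbold{E}^*$ to zero, since $J_{i,0}^n(\mathbold{v},\mathbold{w})=\tfrac12\,\mathbold{c}^{\!\top}M_i^n\,\mathbold{c}$. Suppose $J_{i,0}^n(\mathbold{v},\mathbold{w})=0$. Because every integrand is a sum of squares, each term must vanish separately; in particular the homogeneous Hermite functional contributes
\[
\int\limits_{t_{n-\frac12}}^{t_n}\!\int\limits_{S^{\mathcal H}_{p,i}} \big(Z^2\,\mathbold{v}\cdot\mathbold{v}+\mathbold{w}\cdot\mathbold{w}\big)\,\mathrm{d}\mathbold{x}\,\mathrm{d}t
+\int\limits_{t_{n-1}}^{t_{n-\frac12}}\!\int\limits_{S^{\mathcal H}_{d,i}}\big(Z^2\,\mathbold{v}\cdot\mathbold{v}+\mathbold{w}\cdot\mathbold{w}\big)\,\mathrm{d}\mathbold{x}\,\mathrm{d}t=0,
\]
which forces $\mathbold{v}$ and $\mathbold{w}$ to vanish almost everywhere on these two space-time Hermite regions. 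The point is that, unlike $\mathcal{G}_i^n$ (whose null space contains every local exact solution) or $\mathcal{B}_i^n$ (which only controls the tangential trace of $\mathbold{E}$ on $\partial\Omega\cap S_i$), the Hermite term supplies full $L^2$ control of \emph{both} fields over a region of positive measure.

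The decisive observation is that these Hermite regions are full-dimensional: the spatial pieces $S^{\mathcal H}_{p,i}$ and $S^{\mathcal H}_{d,i}$ are genuine Hermite cells of side $h>0$, and the time slabs $[t_{n-1/2},t_n]$ and $[t_{n-1},t_{n-1/2}]$ have positive length, so each carries positive space-time measure. Since $\mathbold{v},\mathbold{w}\in V$ have tensor-product polynomial components, and a polynomial that vanishes on a nonempty open box vanishes identically, we conclude that $\mathbold{v}$ and $\mathbold{w}$ vanish on the entire local patch $S_i\times[t_{n-1},t_n]$. Hence $(\mathbold{v},\mathbold{w})=(\mathbold{0},\mathbold{0})$, the matrix $M_i^n$ is positive definite, $J_i^n$ is strictly convex and coercive, and \eqref{eq:minPblm_3D} admits a unique global minimizer. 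I expect the main obstacle to be stating this last step cleanly — in particular, confirming that the Hermite integration domains always retain positive $(d+1)$-dimensional measure no matter how the embedded boundary cuts the patch, so that the polynomial-identity argument is always available; it is precisely this anchoring by $\mathcal{H}_i^n$ that pins down all coefficients.
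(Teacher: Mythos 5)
Your proof is correct and follows essentially the same route as the paper: reduce to positive definiteness of the quadratic form, observe that a vanishing homogeneous functional forces both fields to vanish on the full-dimensional Hermite regions $S^{\mathcal{H}}_{p,i}\times(t_{n-1/2},t_n)$ and $S^{\mathcal{H}}_{d,i}\times(t_{n-1},t_{n-1/2})$, and invoke the fact that only the zero polynomial vanishes on a set of positive measure. Your write-up is somewhat more explicit than the paper's (e.g., spelling out the polynomial-identity step and noting that the Hermite cells retain positive measure regardless of how the boundary cuts the patch), but the underlying argument is identical.
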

\begin{proof}
Since we are seeking the correction functions in the polynomial space $V$, 
	we have 
\begin{equation} 
	\mathbold{H}_{h,q}^n = \sum_{j} c_{q,j}^{H,n}\mathbold{\phi}_j \qquad \mbox{and} \qquad
	\mathbold{E}_{h,q}^n = \sum_{j} c_{q,j}^{E,n}\mathbold{\phi}_j.
\end{equation}
Here $c_{q,j}^{H,n}$ and $c_{q,j}^{E,n}$ are scalars,
	and $\mathbold{\phi}_j$ are basis functions of the polynomial space $V$.
The quadratic functional $J_q^n$ can therefore be written has 
\begin{equation}
	J_q^n(\mathbold{c}_q^n) = r_q^n + (\mathbold{g}_q^n)^T\mathbold{c}_q^n + \frac{1}{2} (\mathbold{c}_q^n)^TM_q\mathbold{c}_q^n.
\end{equation}
Here $\mathbold{c}_q^n$ is a vector containing the coefficients $c_{q,j}^{H,n}$ and $c_{q,j}^{E,n}$ for all $j$.
Let us now verify that $M$ is a positive definite matrix to ensure that we have a global minimizer.
Assuming $\mathbold{c}_q^n \neq 0$, 
	we notice that 
\begin{eqnarray*}
	& & \frac{1}{2} (\mathbold{c}_q^n)^TM_q\mathbold{c}_q^n = \underbrace{\mathcal{G}_q^n(\mathbold{H}^n_{h,q},\mathbold{E}^n_{h,q})}_{\geq 0} + \underbrace{\frac{1}{2} \,\int\limits_{t_{n-1}}^{t_n}\int\limits_{\partial\Omega\cap S_q} \left( \mathbold{n}\times\mathbold{E}^n_{h,q} \right) \cdot \left( \mathbold{n}\times\mathbold{E}^n_{h,q}\, \right) \mathrm{d}s\,\mathrm{d}t}_{\geq 0}  \\
      & &  +\underbrace{\frac{1}{2} \, \sum_{j=1}^{N_d}\, \bigg(\frac{\ell_q}{c}\bigg)^{2j}\,\int\limits_{t_{n-1}}^{t_n}\int\limits_{\partial\Omega\cap S_q} (\mathbold{n}\times\partial_t^j\mathbold{E}^n_{h,q})\cdot(\mathbold{n}\times\partial_t^j\mathbold{E}^n_{h,q})\, \mathrm{d}s\,\mathrm{d}t}_{\geq 0} \\
	& &	+ \underbrace{\frac{1}{2} \frac{c_H}{h} \int\limits_{t_{n-\frac {1}{2}}}^{t_n}\int\limits_{S^\mathcal{H}_{p,q}} Z^2  \mathbold{H}^n_{h,q}\cdot\mathbold{H}^n_{h,q}
		+ \mathbold{E}^n_{h,q}\cdot\mathbold{E}^n_{h,q}\,\mathrm{d}\mathbold{x}\,\mathrm{d}t}_{>0} 
		+ \underbrace{\frac {1}{2} \frac{c_H}{h} \int\limits_{t_{n-1}}^{t_{n-\frac {1}{2}}}\int\limits_{S^\mathcal{H}_{d,q}} Z^2 \mathbold{H}^n_{h,q}\cdot\mathbold{H}^n_{h,q} 
		+ \mathbold{E}^n_{h,q}\cdot\mathbold{E}^n_{h,q}\,\mathrm{d}\mathbold{x}\,\mathrm{d}t}_{>0}, 
\end{eqnarray*}
since only the zero polynomial can vanish uniformly on $S^{\mathcal{H}_{p,q}} \times (t_{n-1/2},t_n)$ or $S^{\mathcal{H}_{d,q}} \times (t_{n-1},t_{n-1/2})$.
The matrix $M_q$ is therefore positive definite and the minimization problem \eqref{eq:minPblm_3D}
	has a global minimizer. 
\end{proof}

\begin{proposition} \label{prop:global_minimizer_interface}
The minimization problem \eqref{eq:minPblm_3D_interface} has a unique global minimizer. 
\end{proposition}
\begin{proof}
The proof is similar to that of Proposition \ref{prop:global_minimizer_boundary}. 
\end{proof}

\begin{remark}
	The correction function method should preserve the accuracy of the Hermite-Taylor 
		method if $k=2\,m$.
	Let us assume that $k=2\,m$ and that polynomials approximating the correction functions have an 
	accuracy of $\mathcal{O}(\ell_i^{k+1})$. 
	Using 
	\begin{equation}
		\begin{aligned}
			\mathbold{H}^n_q =&\,\, \mathbold{H} + \mathcal{O}(\ell_q^{k+1}), \\ 
		 	\mathbold{E}^n_q =&\,\, \mathbold{E} + \mathcal{O}(\ell_q^{k+1}),
		\end{aligned} 
	\end{equation}
	in the functional $J_q^n$ we obtain that all terms in the functional $J_q^n$ scale as $\mathcal{O}(\ell_q^{2\,k+5})$. 
{\color{black} This is also consistent with the dimensional analysis made in Remark~\ref{rem:dim_analysis}.}
	Here $J_q^n$ is either given by \eqref{eq:functional_boundary} for an 
		embedded boundary problem or \eqref{eq:functional_interface_1D} for an interface problem.
	We require that $c_H > 0$ to have a well-posed minimization problem.
	We also require that $c_H \leq 1$ so that the Hermite functionals are not dominating the Maxwell's 
		equations residuals 
		and the boundary or interface conditions in $J_q^n$.
    {\color{black} We choose $c_H = 1$ since the condition number of the matrices $M_q$ scales as $\mathcal{O}(c_H^{-1})$, 
    as shown in \cite{LawAppelo2023}.}
\end{remark}

\begin{remark}

Taking $k=2\,m$, 
    the computational cost of the CFM is not negligible and increases as $m$ increases because
    the dimension of the matrices scales as $(2\,m+1)^3$ and $(2\,m+1)^4$ in respectively two and three space dimensions. 
Fortunately, 
    for a given time step, 
    the minimization problems are independent and 
    therefore can be solved in parallel,
    reducing the computational cost of the CFM. 
Note that the pre-computation step of the CFM (computation of the matrices $M_i$, their scaling and LU factorizations) can also be performed in parallel.   
We refer the reader to \cite{Abraham2017} for more information about the benefits of a parallel implementation of the CFM. 
\end{remark}

\section{Numerical Examples} \label{sec:numerical_examples}

In this section, 
	we numerically investigate the stability of the Hermite-Taylor correction function method and 
	perform convergence studies in one and two space dimensions.  

\subsection{Hermite-Taylor Correction Function Methods in One Dimension}

We consider the 1-D simplification of Maxwell's equations \eqref{eq:Maxwell_1D}. 
For a $(2\,m+1)$-order Hermite-Taylor method, 
	we use polynomials of degree $2\,m$ as the correction functions to maintain accuracy and choose $c_H = 1$.
The physical parameters are $\mu=1$ and $\epsilon = 1$.
	
\subsubsection{Stability}

The first example considers a domain where
    all the CF nodes belong to the primal mesh. 
This allows us to write the Hermite-Taylor correction function 
    method as a one step method for the solution on the primal nodes since the Hermite functionals for the primal CF nodes only require the numerical solutions at times $t_n$ and $t_{n+1/2}$ to evolve the data from $t_n$ to $t_{n+1}$.
In this situation, we express as a one-step method
\begin{equation} \label{eq:one_step_version}
	\mathbold{W}^{n+1} = A \, \mathbold{W}^n.
\end{equation}
Here $\mathbold{W}^n$ contains all the degrees of freedom on the primal mesh at time $t_n$ and $A$ is a square matrix of 
	dimension $2\,(m+1)\,(N_x+1)$.
A stable method should have all the eigenvalues of $A$ on or inside the unit circle. 
We then compute the spectral radius of $A$,
	denoted as $\rho(A)$.
{\color{black} Note that this is only a necessary condition.
That being said, 
    the stability properties of the method are also corroborated by the results of long time simulations presented at the end of this subsection.}
We consider the physical domain $\Omega = [h_{\max}-\frac{h_{\min}}{3}, 1-h_{\max}+\frac{5\,h_{\min}}{12}]$ and 
	the computational domain $\Omega_c = [0,1]$.
Here $h \in \{\frac{1}{25},\frac{1}{50},\frac{1}{100},\frac{1}{200},\frac{1}{400},\frac{1}{800},\frac{1}{1600}\}$.
	
The absolute difference between $\rho(A)$ and one is illustrated in Fig.~\ref{fig:spatial_derivative_spectral_radius_1D} for different 
    mesh sizes,
    CFL constants and values of $N_d$,
    that is the maximum order of the derivatives in the functional $\mathcal{B}$ \eqref{eq:constraint_derivatives_1D}.
Note that we consider the spatial derivatives of the correction functions in the functional $\mathcal{B}$. 
In other words, 
    we convert the time derivatives of the electromagnetic fields in 
    the functional $\mathcal{B}$ using the relations \eqref{eq:convert_time_derivatives_into_spatial_1D}.
We consider that the method is stable if the spectral radius $\rho(A)$ 
    is at most one with an error of $\mathcal{O}(10^{-10})$.
\begin{figure}   
	\centering
	\begin{adjustbox}{max width=0.8\textwidth,center}
		\includegraphics[width=2.5in,trim={0cm 0cm 1.75cm 0cm},clip]{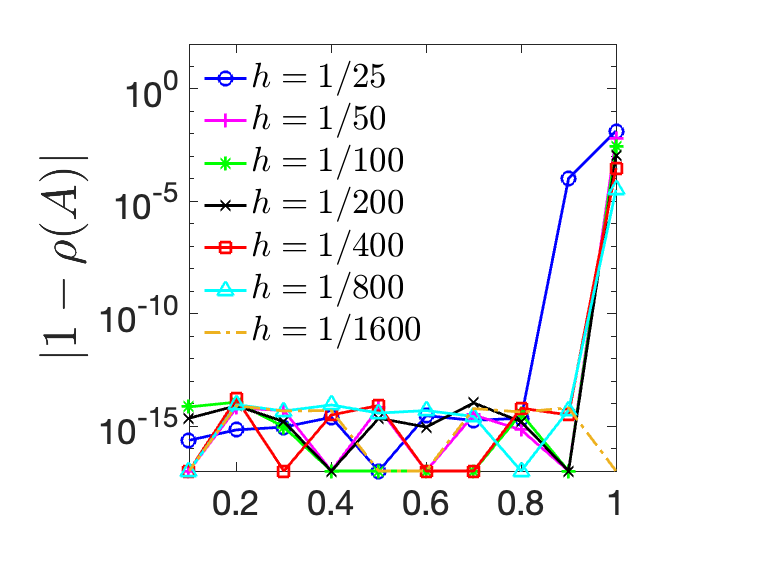} \hspace{-20.0pt}
		\includegraphics[width=2.5in,trim={0cm 0cm 1.75cm 0cm},clip]{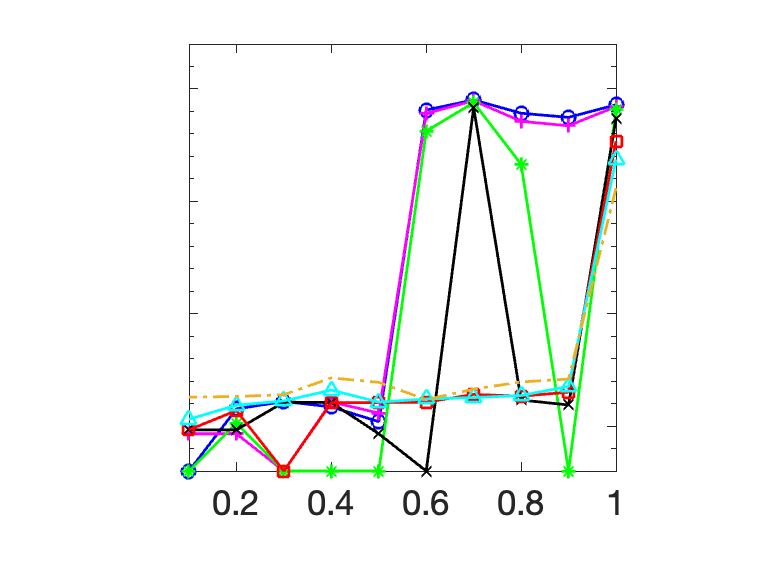}\hspace{-20pt}
		\includegraphics[width=2.5in,trim={0cm 0cm 1.75cm 0cm},clip]{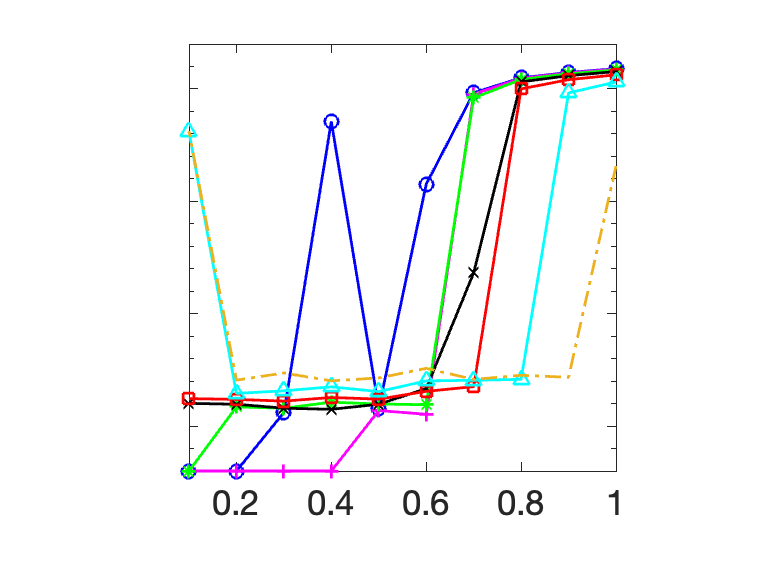}\hspace{-20pt}
		\includegraphics[width=2.5in,trim={0cm 0cm 1.75cm 0cm},clip]{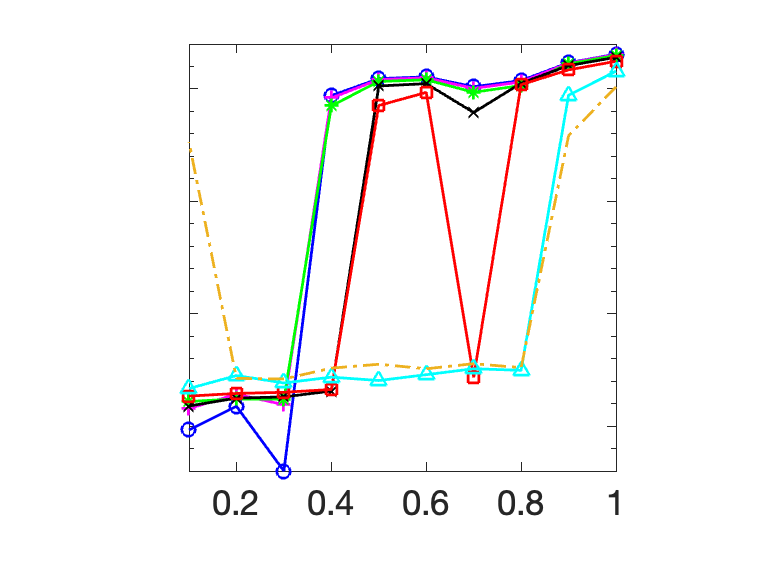}
	\end{adjustbox} 
	\begin{adjustbox}{max width=0.8\textwidth,center}
		\includegraphics[width=2.5in,trim={0cm 0cm 1.75cm 0cm},clip]{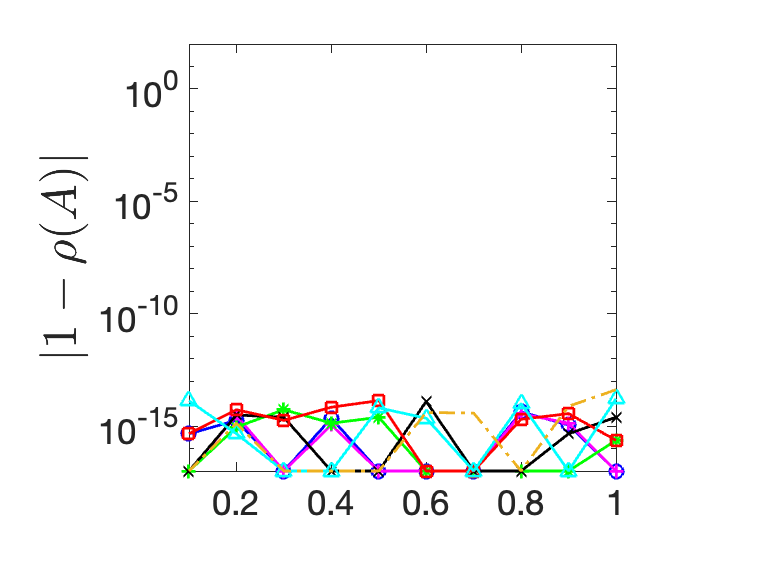} \hspace{-20pt}
		\includegraphics[width=2.5in,trim={0cm 0cm 1.75cm 0cm},clip]{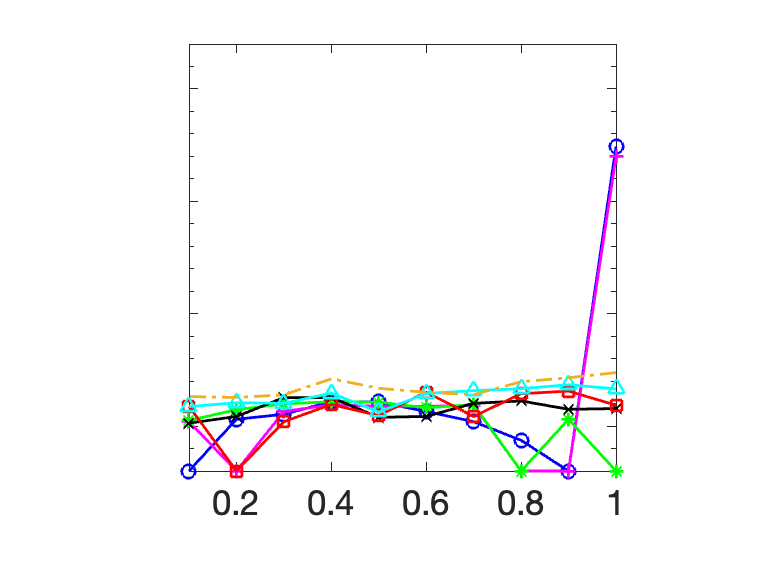}\hspace{-20pt}
		\includegraphics[width=2.5in,trim={0cm 0cm 1.75cm 0cm},clip]{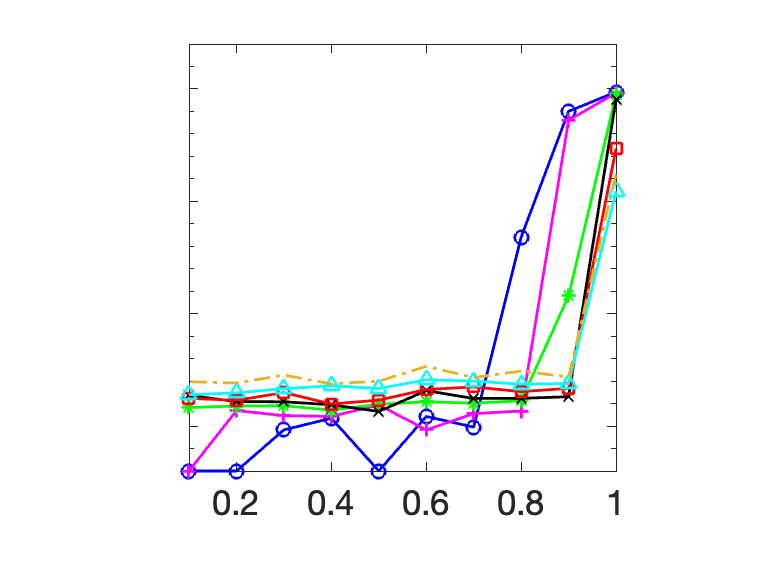}\hspace{-20pt}
		\includegraphics[width=2.5in,trim={0cm 0cm 1.75cm 0cm},clip]{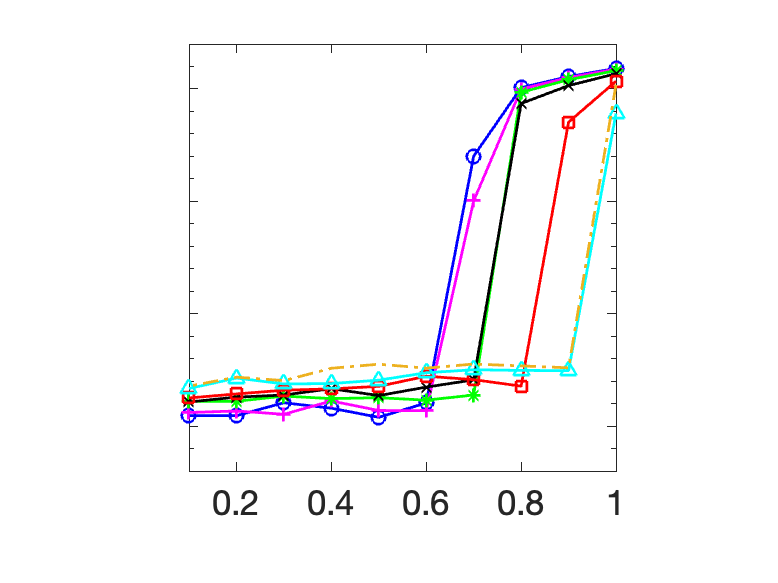}
	\end{adjustbox}  
	\begin{adjustbox}{max width=0.8\textwidth,center}
		\includegraphics[width=2.5in,trim={0cm 0cm 1.75cm 0cm},clip]{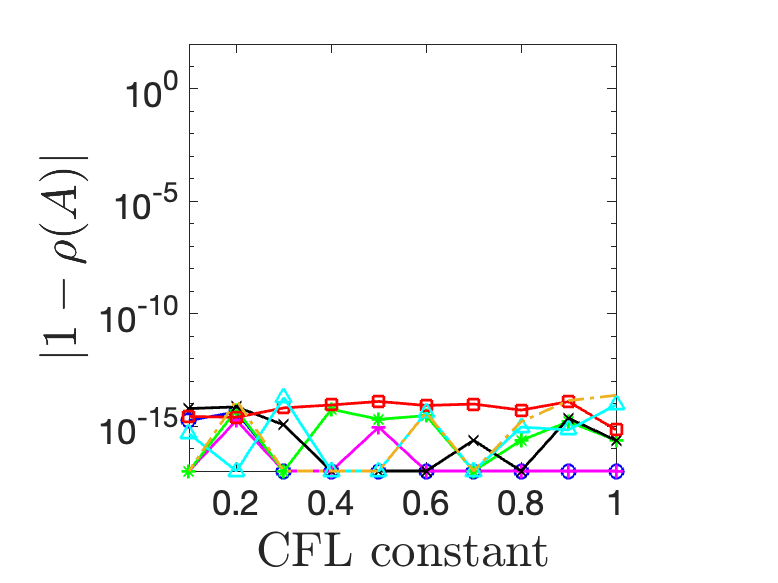} \hspace{-20pt}
		\includegraphics[width=2.5in,trim={0cm 0cm 1.75cm 0cm},clip]{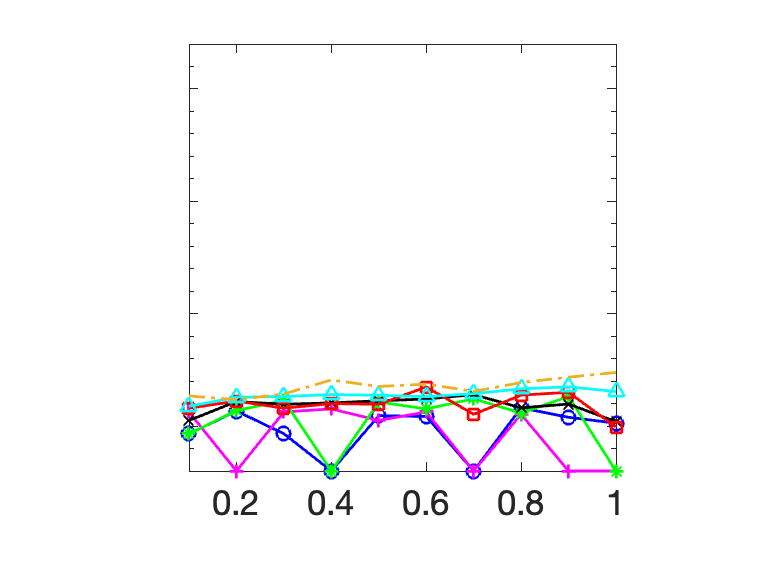}\hspace{-20pt}
		\includegraphics[width=2.5in,trim={0cm 0cm 1.75cm 0cm},clip]{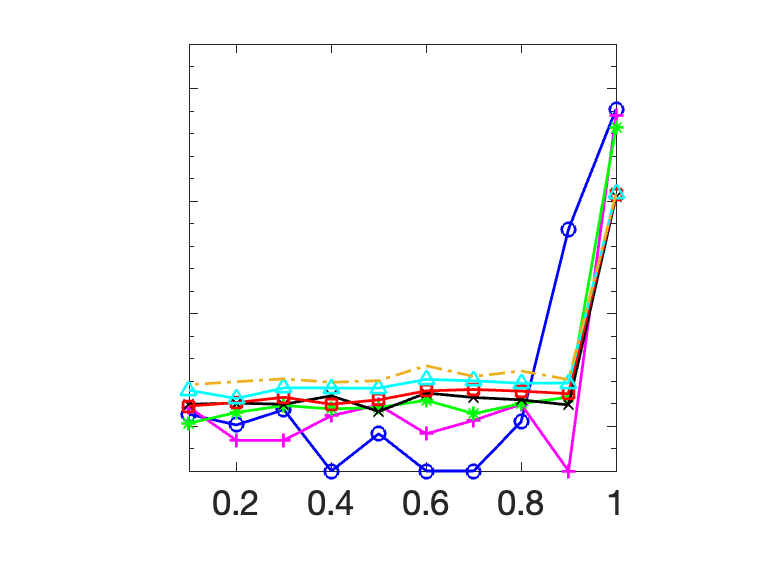}\hspace{-20pt}
		\includegraphics[width=2.5in,trim={0cm 0cm 1.75cm 0cm},clip]{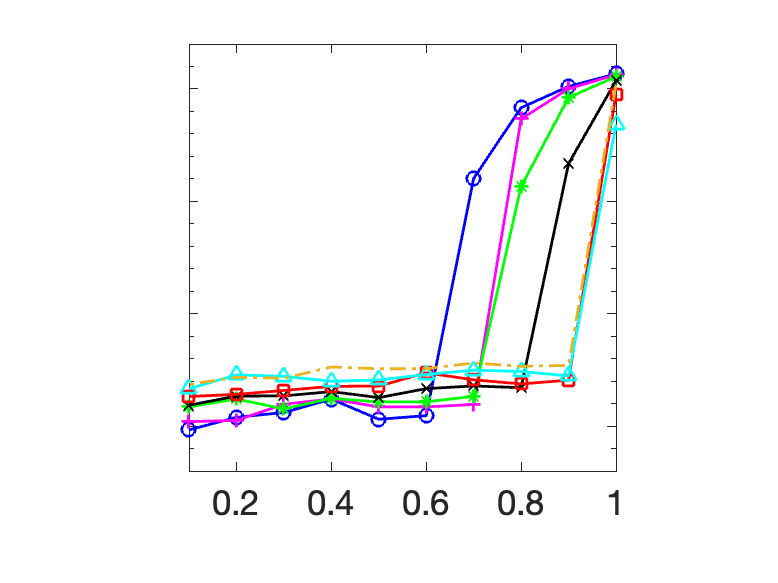}
	\end{adjustbox} 
	\begin{adjustbox}{max width=0.8\textwidth,center}
		\phantom{\includegraphics[width=2.5in,trim={0cm 0cm 1.75cm 0cm},clip]{spatial_derivatives_spectral_radius_m_1_nb_der_2.png}} \hspace{-20pt}
		\includegraphics[width=2.5in,trim={0cm 0cm 1.75cm 0cm},clip]{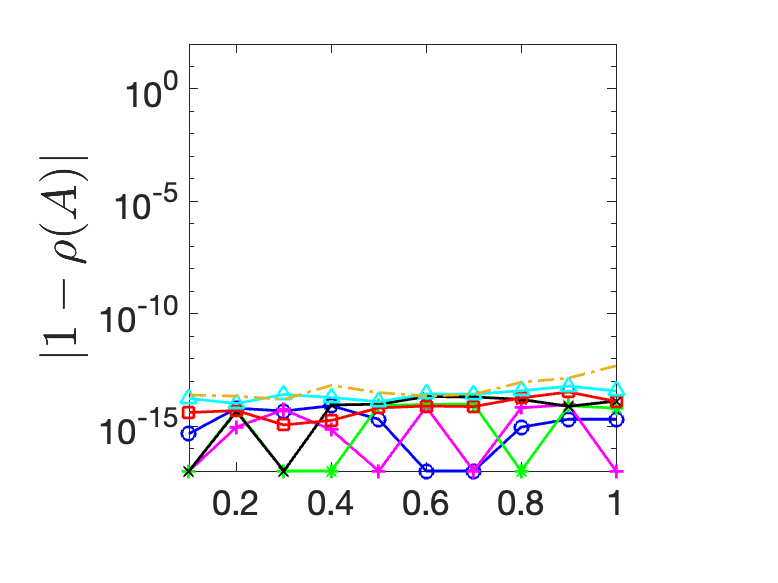}\hspace{-20pt}
		\includegraphics[width=2.5in,trim={0cm 0cm 1.75cm 0cm},clip]{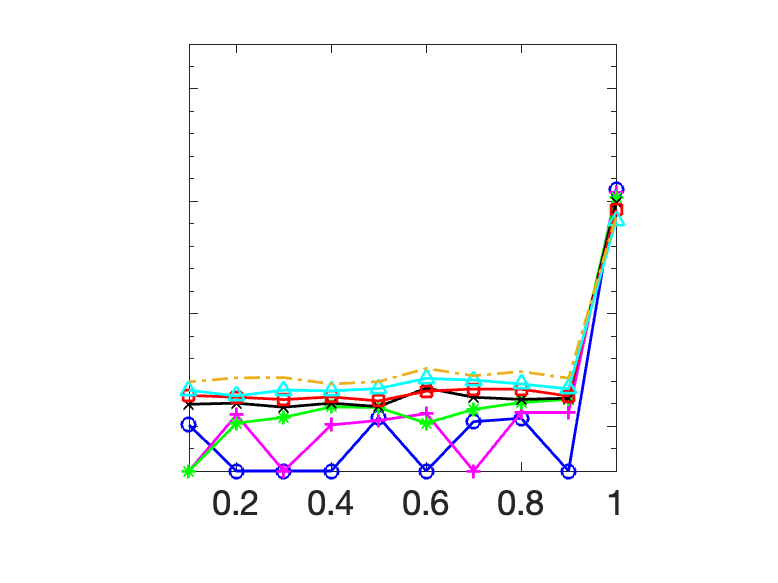}\hspace{-20pt}
		\includegraphics[width=2.5in,trim={0cm 0cm 1.75cm 0cm},clip]{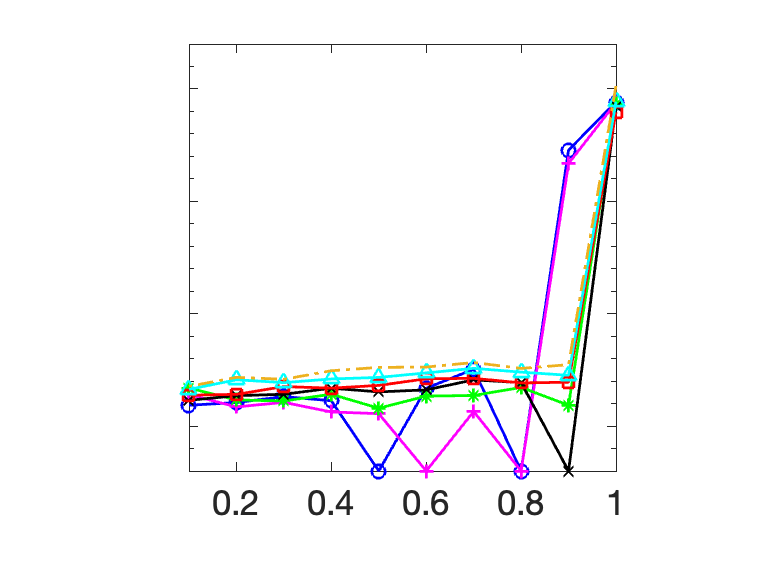}
	\end{adjustbox} 
	\begin{adjustbox}{max width=0.8\textwidth,center}
		\phantom{\includegraphics[width=2.5in,trim={0cm 0cm 1.75cm 0cm},clip]{spatial_derivatives_spectral_radius_m_1_nb_der_2.png}} \hspace{-20pt}
		\includegraphics[width=2.5in,trim={0cm 0cm 1.75cm 0cm},clip]{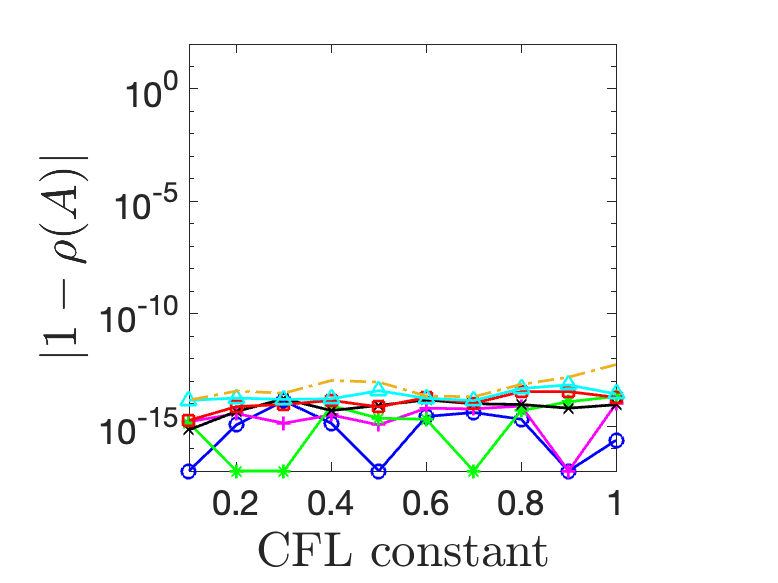}\hspace{-20pt}
		\includegraphics[width=2.5in,trim={0cm 0cm 1.75cm 0cm},clip]{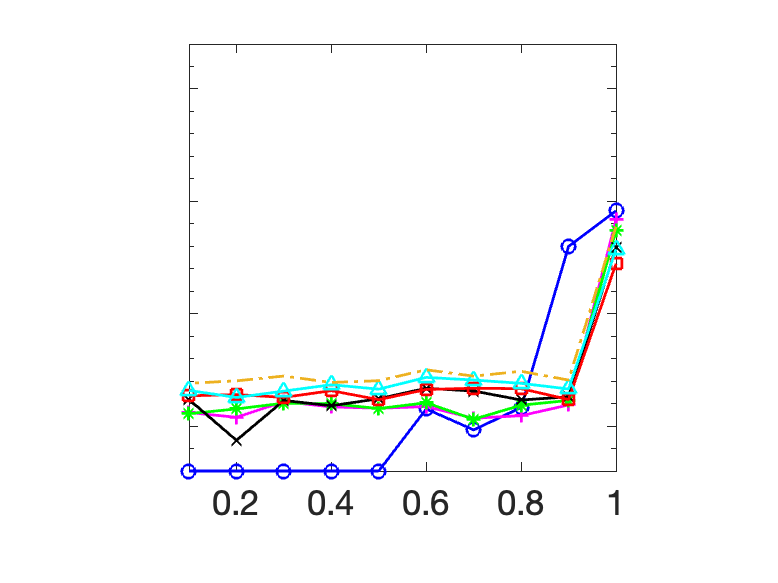}\hspace{-20pt}
		\includegraphics[width=2.5in,trim={0cm 0cm 1.75cm 0cm},clip]{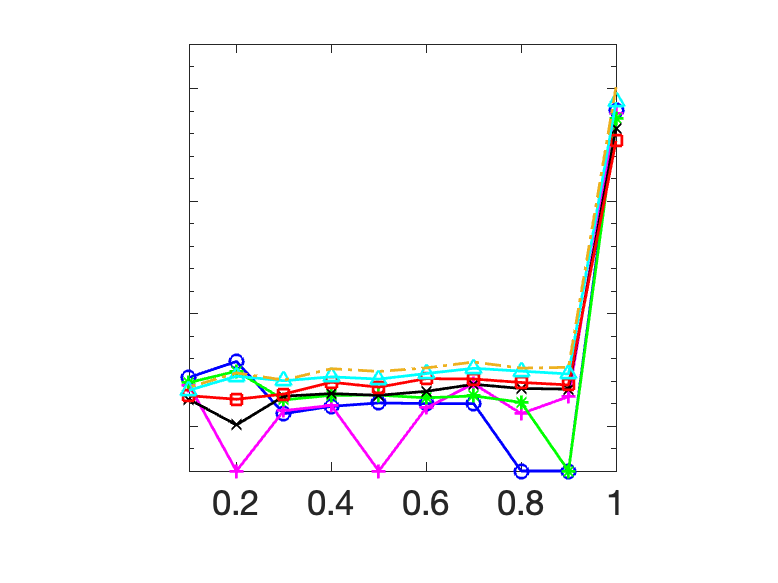}
	\end{adjustbox} 
	\begin{adjustbox}{max width=0.8\textwidth,center}
		\phantom{\includegraphics[width=2.5in,trim={0cm 0cm 1.75cm 0cm},clip]{spatial_derivatives_spectral_radius_m_1_nb_der_2.png}} \hspace{-20pt}
		\phantom{\includegraphics[width=2.5in,trim={0cm 0cm 1.75cm 0cm},clip]{spatial_derivatives_spectral_radius_m_2_nb_der_4.png}}\hspace{-20pt}
		\includegraphics[width=2.5in,trim={0cm 0cm 1.75cm 0cm},clip]{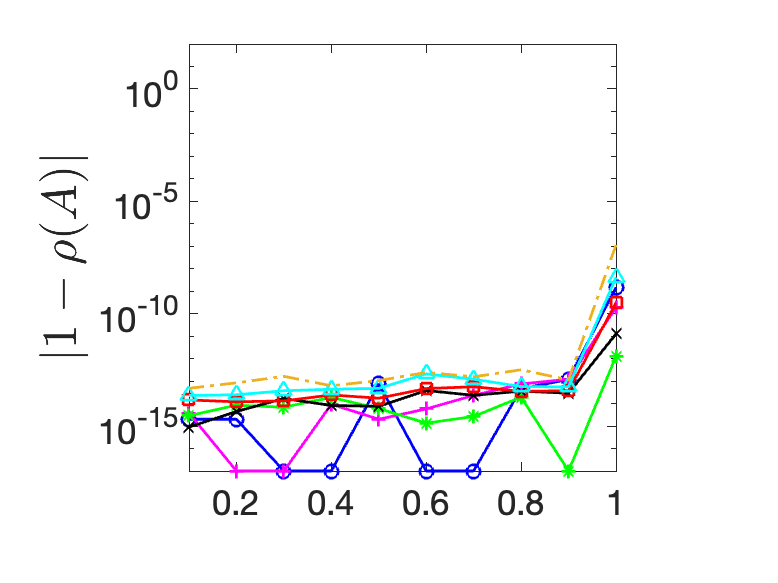}\hspace{-20pt}
		\includegraphics[width=2.5in,trim={0cm 0cm 1.75cm 0cm},clip]{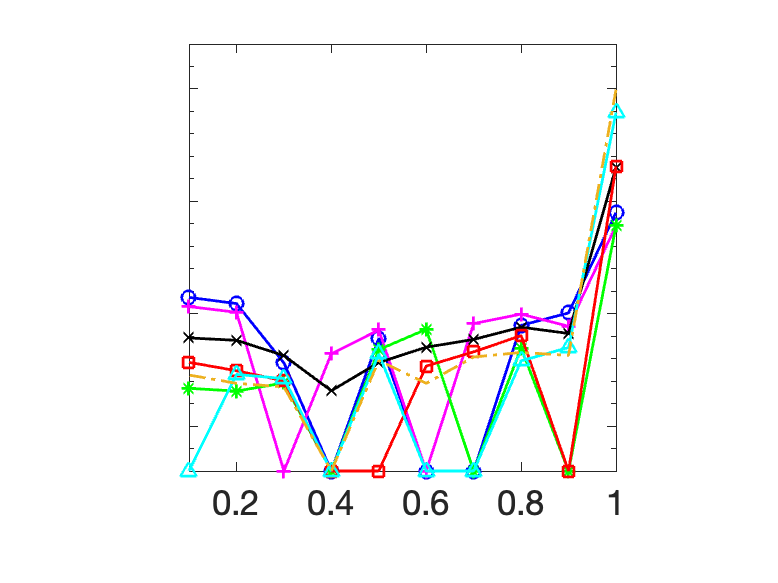}
	\end{adjustbox} 
	\begin{adjustbox}{max width=0.8\textwidth,center}
		\phantom{\includegraphics[width=2.5in,trim={0cm 0cm 1.75cm 0cm},clip]{spatial_derivatives_spectral_radius_m_1_nb_der_2.png}} \hspace{-20pt}
		\phantom{\includegraphics[width=2.5in,trim={0cm 0cm 1.75cm 0cm},clip]{spatial_derivatives_spectral_radius_m_2_nb_der_4.png}}\hspace{-20pt}
		\includegraphics[width=2.5in,trim={0cm 0cm 1.75cm 0cm},clip]{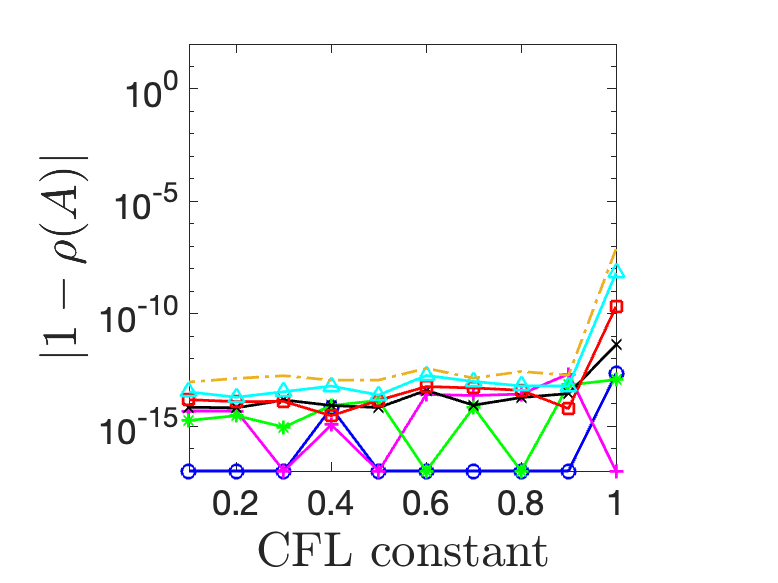}\hspace{-20pt}
		\includegraphics[width=2.5in,trim={0cm 0cm 1.75cm 0cm},clip]{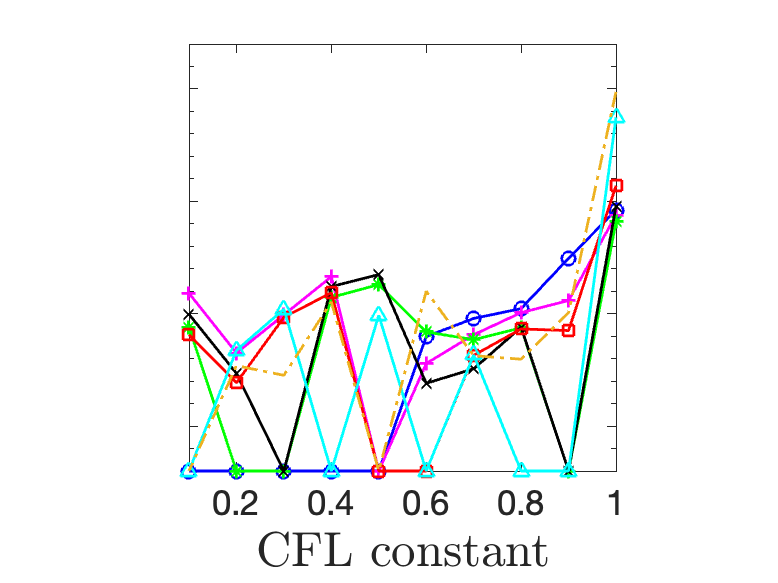}
	\end{adjustbox} 
       \caption{Absolute difference between one and the spectral radius $\rho(A)$ of the matrix $A$ as a function of the CFL constant for different mesh sizes using the constraints on spatial derivatives. 
       The columns are  for different $m$: 1 to 4 from the left to the right. The rows are for the maximum order of the considered spatial derivatives ($N_d$) at the boundary: 0 to 6 from the top to the bottom.}
       \label{fig:spatial_derivative_spectral_radius_1D}
\end{figure}
According to the numerical results, 
    the stability of the method improves as either the CFL constant decreases, 
    the value $N_d$ increases or 
    the mesh size decreases. 
Note that the CFL constant under which the method is stable increases as the mesh size decreases or the value $N_d$ increases.
Moreover,
    for $m = 1 - 2$ and $N_d = m$, 
    we recover the stability condition of the original Hermite-Taylor method ($\mbox{CFL}=1$). 
Although the same behaviour is observed for $m=4$, 
    the difference between $\rho(A)$ and one is larger, 
    particularly for $N_d \geq 5$.
{\color{black} This can be partly explained by the large condition number of the CFM matrices (see subsection~\ref{sec:cond_numb_1D}) that prevents an accurate approximation of the matrix $A$ and therefore of its spectral radius.}

{\color{black} 
The stability of the proposed Hermite-Taylor correction function method can also be further improved by the Hermite smoothing step suggested in \cite{compat_wave_hermite_AAL_DEAA_WDH}. 
Note that we do not need to update the CF nodes at each iteration of the smoothing step.
We repeated the same numerical examples but with one and ten iterations of the Hermite smoothing step at the end of each time step. 
The numerical results suggest that increasing $N_d$ has a much greater impact on the stability. 
}

We now perform the same test but this time we use directly the time derivatives of the electromagnetic fields in the functional $\mathcal{B}$ instead of converting them into spatial derivatives,
    leading to a method that is easier to implement. 
By consistency, 
    this would also implicitly penalize the spatial derivatives and therefore we should expect an improvement of the stability condition.
The absolute difference between $\rho(A)$ and one is illustrated in Fig.~\ref{fig:time_derivative_spectral_radius_1D} for different 
    mesh sizes,
    CFL constants and values of $N_d$,
    that is the maximum order of the time derivatives that are considered in the functional $\mathcal{B}$ \eqref{eq:constraint_derivatives_1D}.
For all values of $m$, 
    there is a clear improvement when $N_d$ goes from 0 to 1. 
However, 
    for $N_d>1$, 
    there is no significant improvement of the stability of the method when compared with $N_d=1$.
For $m=4$ and $N_d \geq 5$, 
    we even observe a deterioration of the stability of the method.
{\color{black} We therefore always convert time derivatives of the variable fields into spatial derivatives since this leads to a Hermite-Taylor correction function method that is stable for a larger CFL constant.}
\begin{figure}   
	\centering
	\begin{adjustbox}{max width=0.8\textwidth,center}
		\includegraphics[width=2.5in,trim={0cm 0cm 1.75cm 0cm},clip]{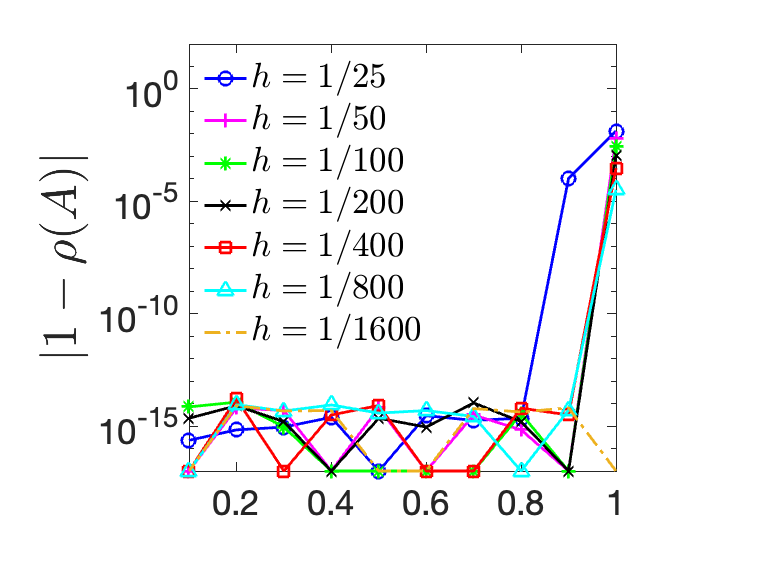} \hspace{-20.0pt}
		\includegraphics[width=2.5in,trim={0cm 0cm 1.75cm 0cm},clip]{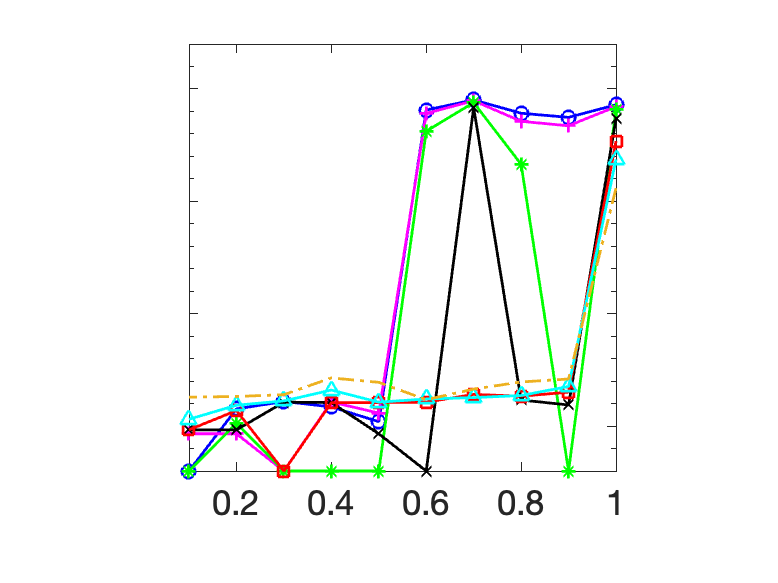}\hspace{-20pt}
		\includegraphics[width=2.5in,trim={0cm 0cm 1.75cm 0cm},clip]{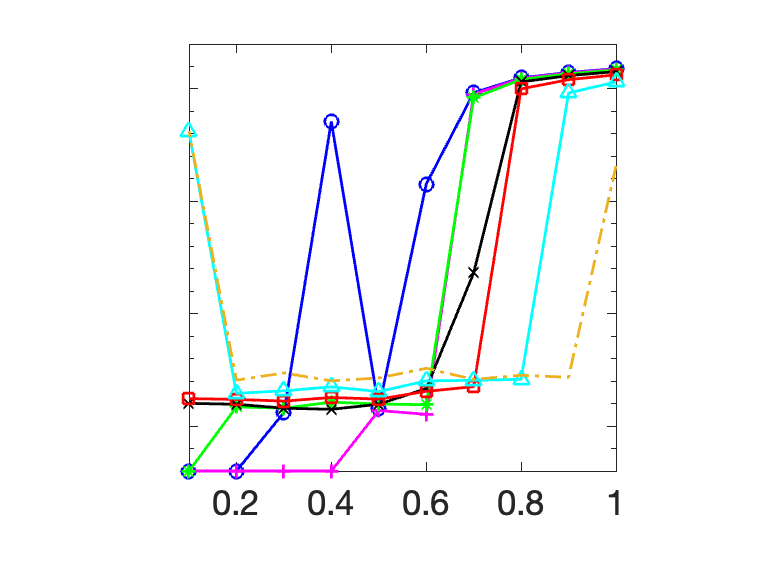}\hspace{-20pt}
		\includegraphics[width=2.5in,trim={0cm 0cm 1.75cm 0cm},clip]{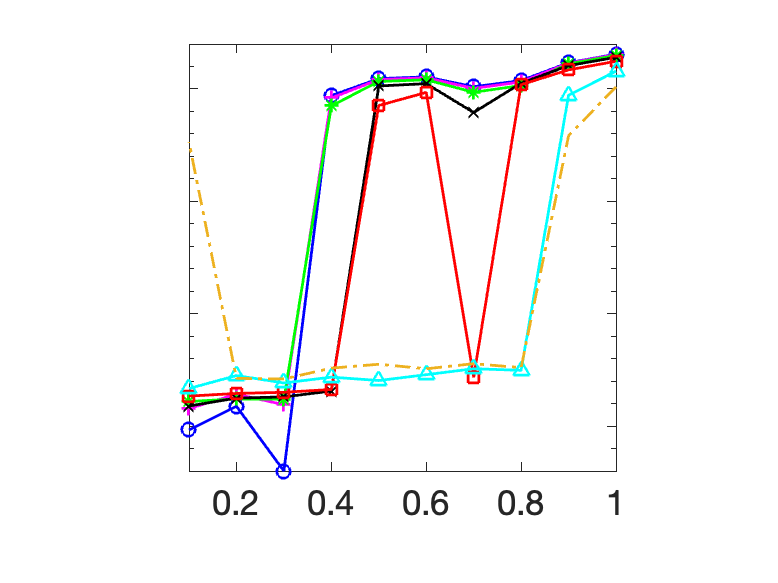}
	\end{adjustbox} 
	\begin{adjustbox}{max width=0.8\textwidth,center}
		\includegraphics[width=2.5in,trim={0cm 0cm 1.75cm 0cm},clip]{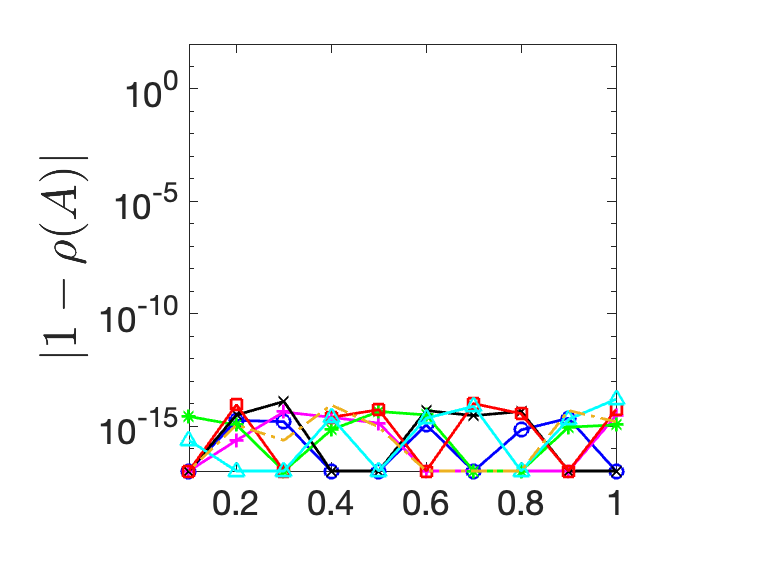} \hspace{-20pt}
		\includegraphics[width=2.5in,trim={0cm 0cm 1.75cm 0cm},clip]{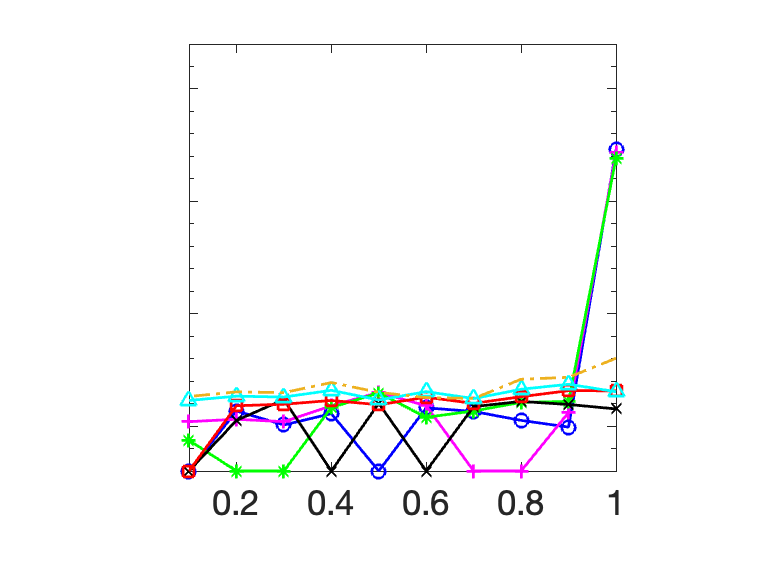}\hspace{-20pt}
		\includegraphics[width=2.5in,trim={0cm 0cm 1.75cm 0cm},clip]{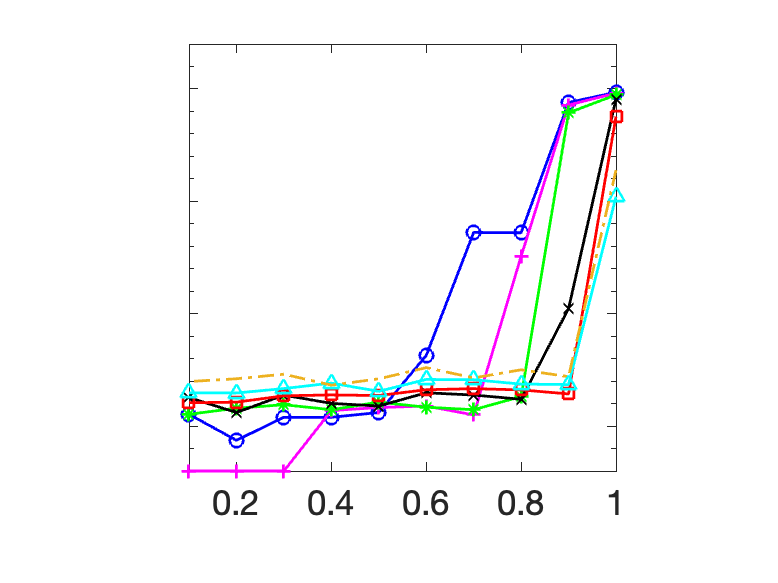}\hspace{-20pt}
		\includegraphics[width=2.5in,trim={0cm 0cm 1.75cm 0cm},clip]{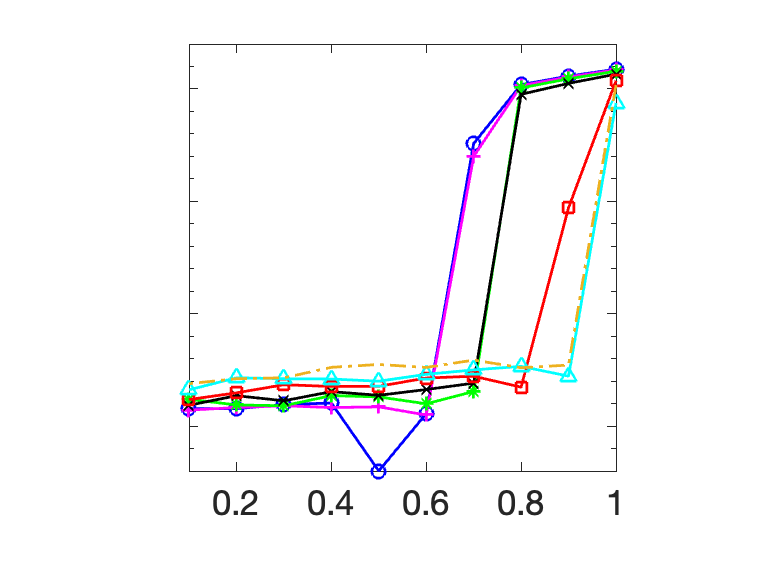}
	\end{adjustbox}  
	\begin{adjustbox}{max width=0.8\textwidth,center}
		\includegraphics[width=2.5in,trim={0cm 0cm 1.75cm 0cm},clip]{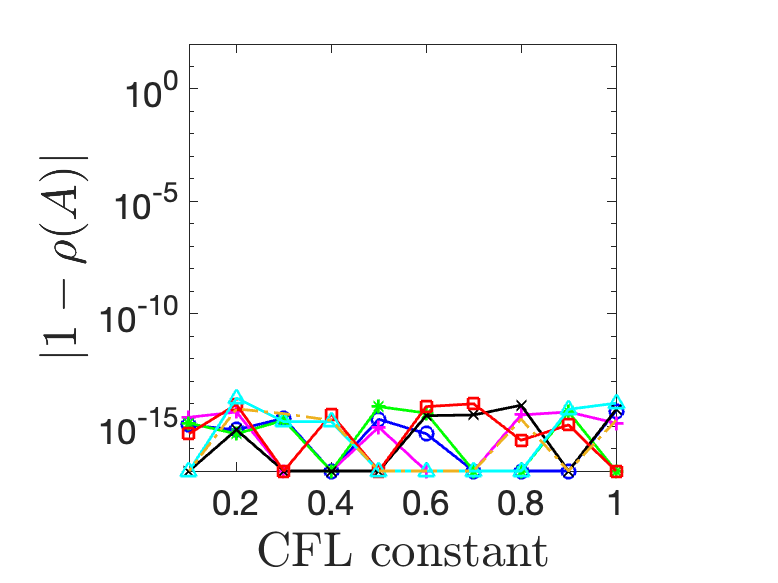} \hspace{-20pt}
		\includegraphics[width=2.5in,trim={0cm 0cm 1.75cm 0cm},clip]{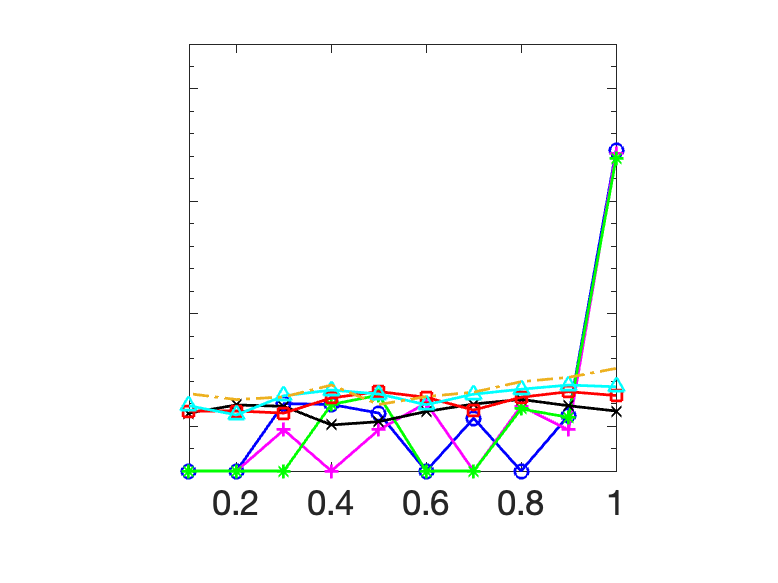}\hspace{-20pt}
		\includegraphics[width=2.5in,trim={0cm 0cm 1.75cm 0cm},clip]{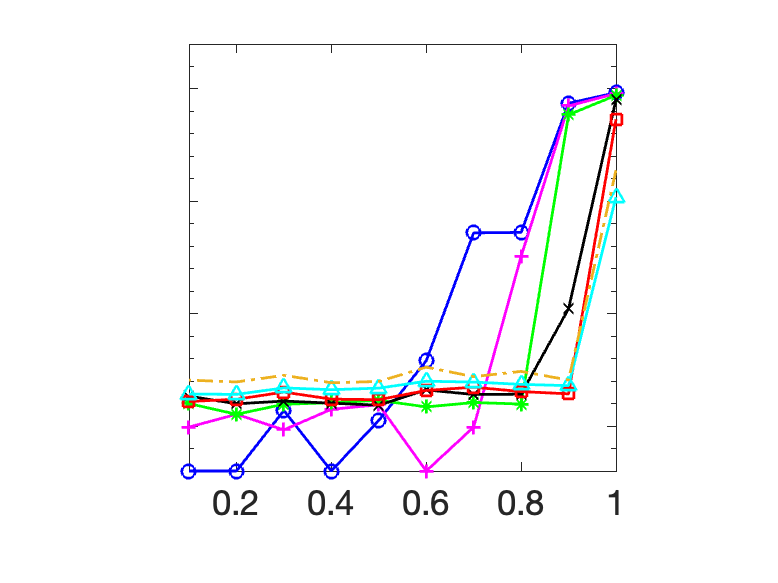}\hspace{-20pt}
		\includegraphics[width=2.5in,trim={0cm 0cm 1.75cm 0cm},clip]{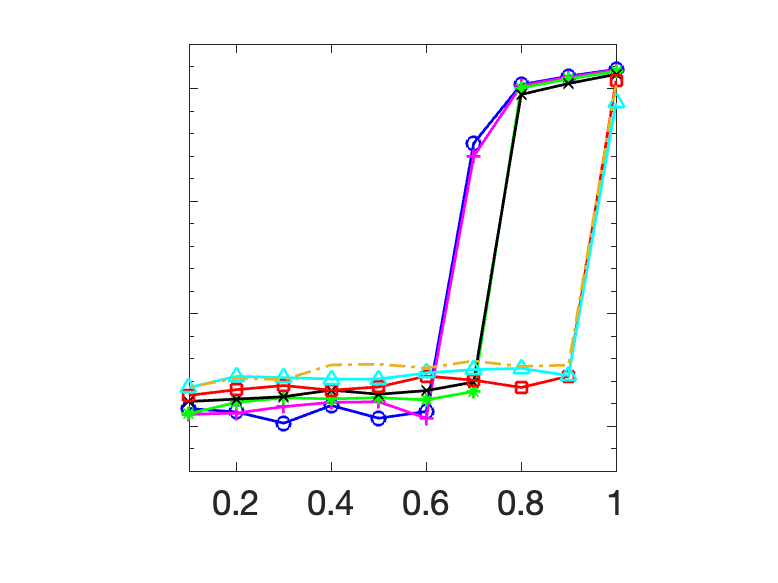}
	\end{adjustbox} 
	\begin{adjustbox}{max width=0.8\textwidth,center}
		\phantom{\includegraphics[width=2.5in,trim={0cm 0cm 1.75cm 0cm},clip]{time_derivatives_spectral_radius_m_1_nb_der_2.png}} \hspace{-20pt}
		\includegraphics[width=2.5in,trim={0cm 0cm 1.75cm 0cm},clip]{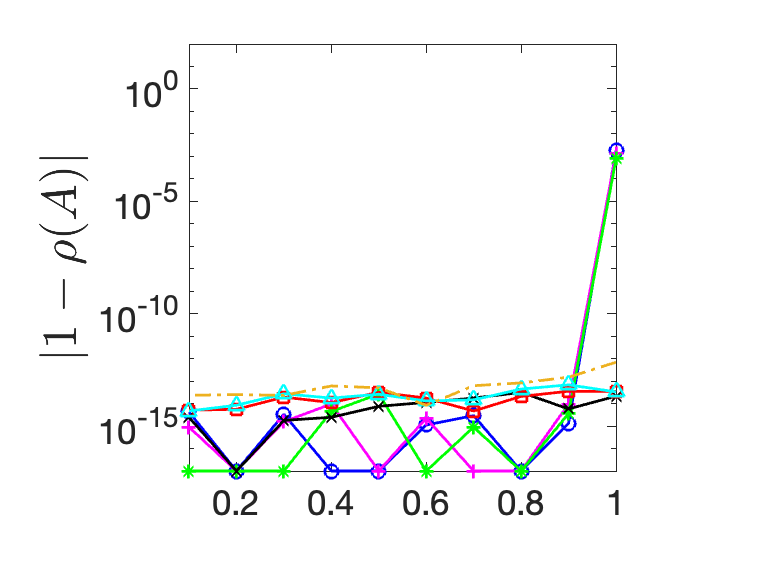}\hspace{-20pt}
		\includegraphics[width=2.5in,trim={0cm 0cm 1.75cm 0cm},clip]{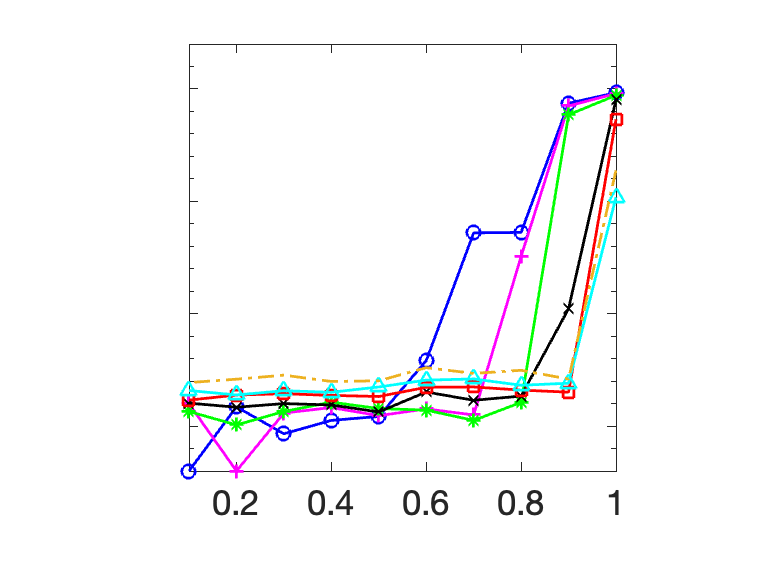}\hspace{-20pt}
		\includegraphics[width=2.5in,trim={0cm 0cm 1.75cm 0cm},clip]{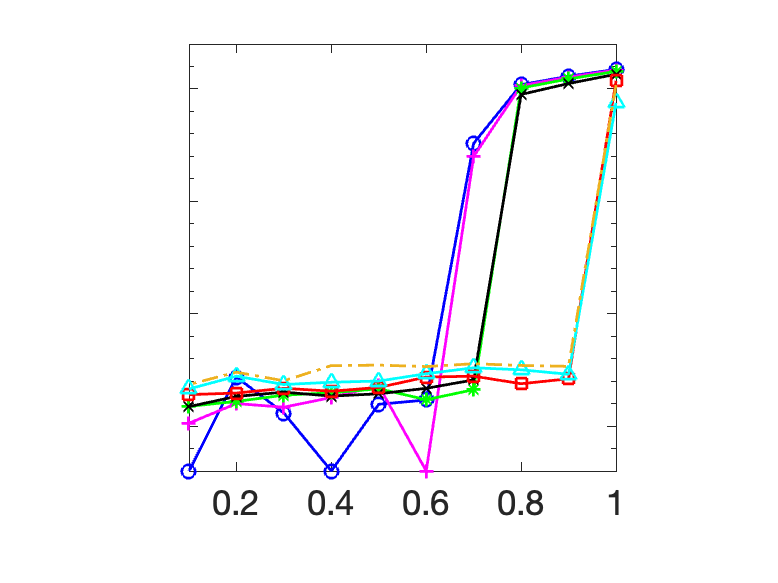}
	\end{adjustbox} 
	\begin{adjustbox}{max width=0.8\textwidth,center}
		\phantom{\includegraphics[width=2.5in,trim={0cm 0cm 1.75cm 0cm},clip]{time_derivatives_spectral_radius_m_1_nb_der_2.png}} \hspace{-20pt}
		\includegraphics[width=2.5in,trim={0cm 0cm 1.75cm 0cm},clip]{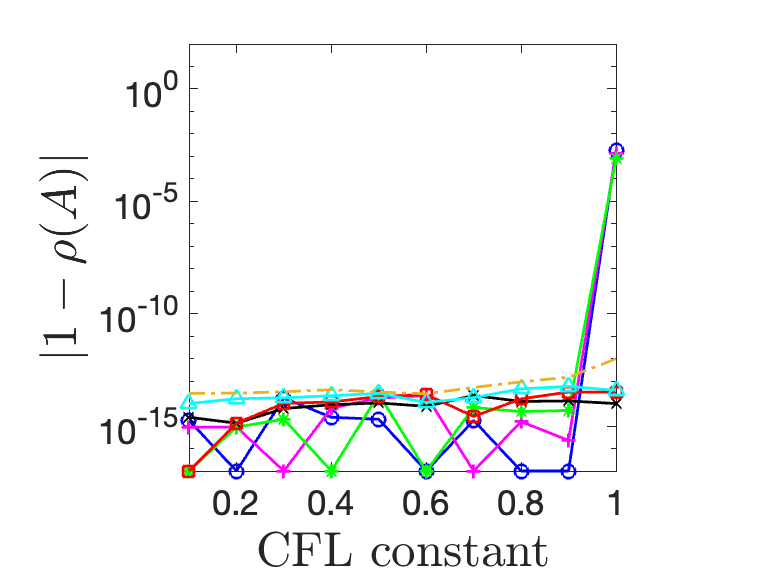}\hspace{-20pt}
		\includegraphics[width=2.5in,trim={0cm 0cm 1.75cm 0cm},clip]{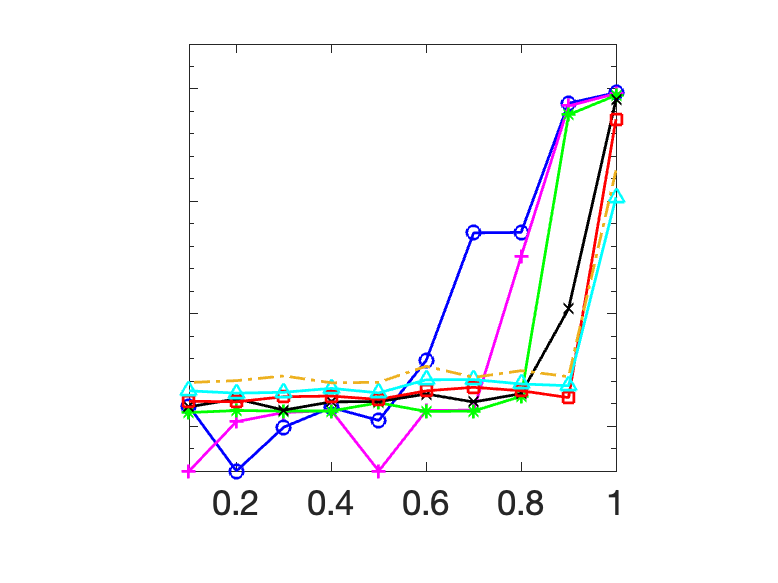}\hspace{-20pt}
		\includegraphics[width=2.5in,trim={0cm 0cm 1.75cm 0cm},clip]{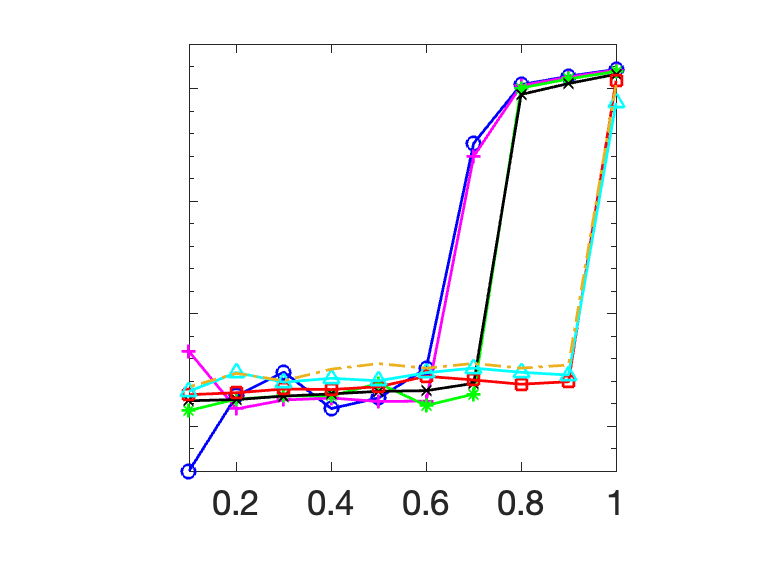}
	\end{adjustbox} 
	\begin{adjustbox}{max width=0.8\textwidth,center}
		\phantom{\includegraphics[width=2.5in,trim={0cm 0cm 1.75cm 0cm},clip]{time_derivatives_spectral_radius_m_1_nb_der_2.png}} \hspace{-20pt}
		\phantom{\includegraphics[width=2.5in,trim={0cm 0cm 1.75cm 0cm},clip]{time_derivatives_spectral_radius_m_2_nb_der_4.png}}\hspace{-20pt}
		\includegraphics[width=2.5in,trim={0cm 0cm 1.75cm 0cm},clip]{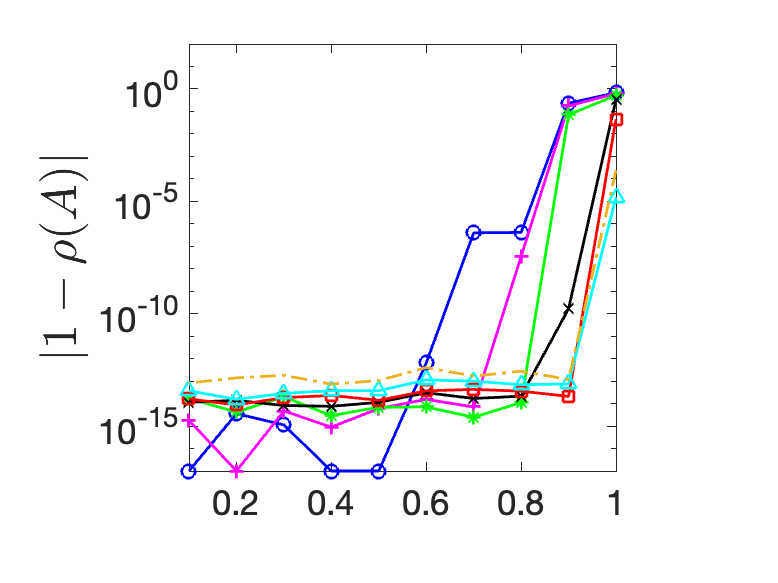}\hspace{-20pt}
		\includegraphics[width=2.5in,trim={0cm 0cm 1.75cm 0cm},clip]{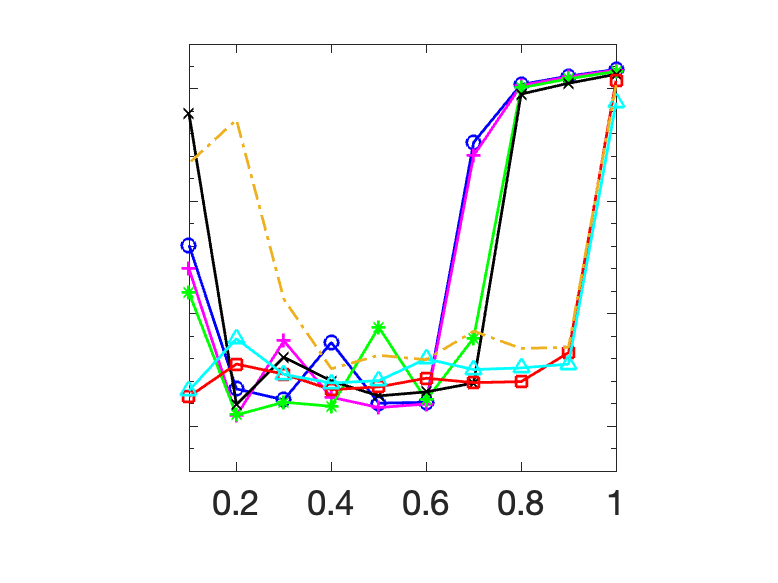}
	\end{adjustbox} 
	\begin{adjustbox}{max width=0.8\textwidth,center}
		\phantom{\includegraphics[width=2.5in,trim={0cm 0cm 1.75cm 0cm},clip]{time_derivatives_spectral_radius_m_1_nb_der_2.png}} \hspace{-20pt}
		\phantom{\includegraphics[width=2.5in,trim={0cm 0cm 1.75cm 0cm},clip]{time_derivatives_spectral_radius_m_2_nb_der_4.png}}\hspace{-20pt}
		\includegraphics[width=2.5in,trim={0cm 0cm 1.75cm 0cm},clip]{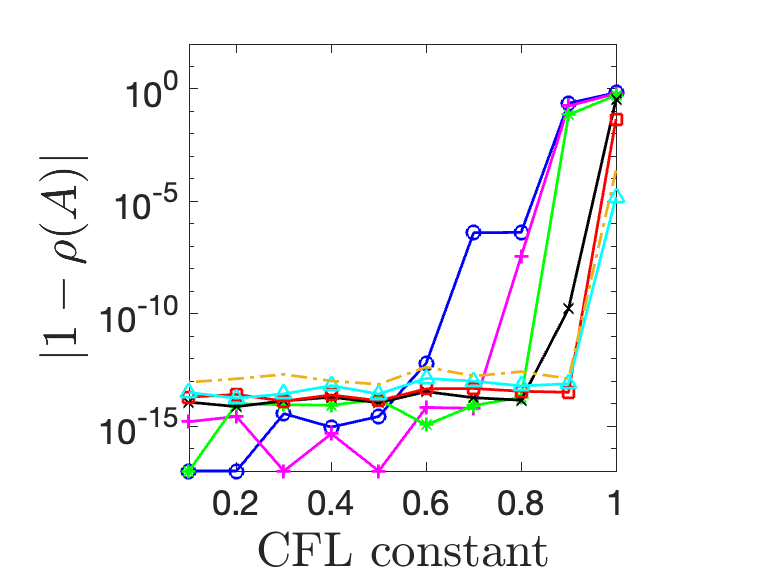}\hspace{-20pt}
		\includegraphics[width=2.5in,trim={0cm 0cm 1.75cm 0cm},clip]{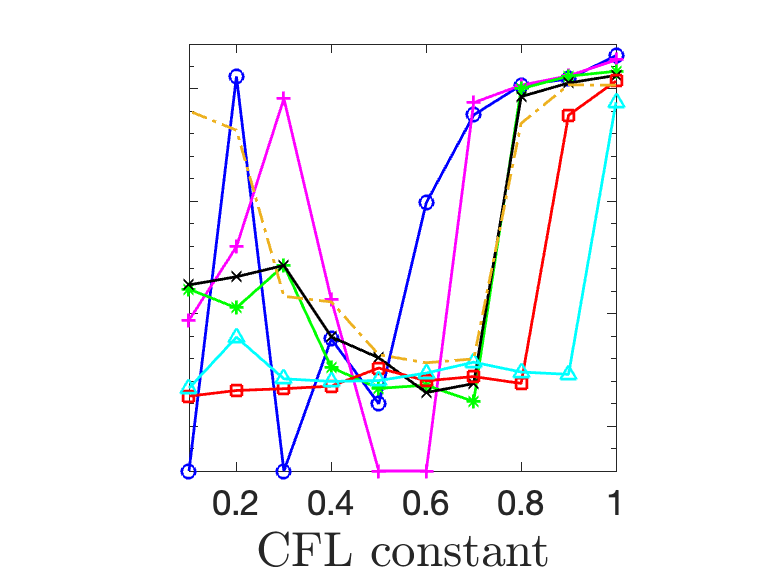}
	\end{adjustbox} 
       \caption{Absolute difference between one and the spectral radius $\rho(A)$ of the matrix $A$ as a function of the CFL constant for different mesh sizes using the constraints on the time derivatives. 
       The columns are  for different $m$: 1 to 4 from the left to the right. The rows are for the maximum order of the considered time derivatives ($N_d$) at the boundary: 0 to 6 from the top to the bottom.}
       \label{fig:time_derivative_spectral_radius_1D}
\end{figure}

Let us now consider a domain where there are primal and dual CF nodes. 
The physical domain $\Omega = [\frac{\pi}{50}, 1- \frac{\pi}{100}]$ while the computational domain is $\Omega_c = [0,1]$.
In this situation, to express the method as a one-step evolution we would have to include both the primal and dual node solutions in the definition of $\mathbold{W}^n$ in \eqref{eq:one_step_version}. Instead we here
investigate 
	the stability using long time simulations. 
{\color{black}
We consider the trivial solution for all electromagnetic fields but with initial data,
	namely the electromagnetic fields and their first $m$ derivatives, 
	to be random numbers in $(-10\,\epsilon_m,10\,\epsilon_m)$.}
Here $\epsilon_m$ is the machine precision.

Fig.~\ref{fig:investigation_long_time_1D} illustrates the maximum norm of the numerical solution over $10^6$ time steps as a function of the CFL constant for different 
	mesh sizes and values of $N_d$.
Here again, 
    the stability of the Hermite-Taylor correction function methods improves as either the CFL constant decreases, the value $N_d$ 
    increases or 
    the mesh size decreases. 
For $m=1,2,3$ with respectively $N_d = 0, 1, 4$,
    we recover the stability condition of the original Hermite-Taylor method ($\mbox{CFL}=1$).

\begin{figure}   
	\centering
	\begin{adjustbox}{max width=0.8\textwidth,center}
		\includegraphics[width=2.5in,trim={0cm 0cm 1.75cm 0cm},clip]{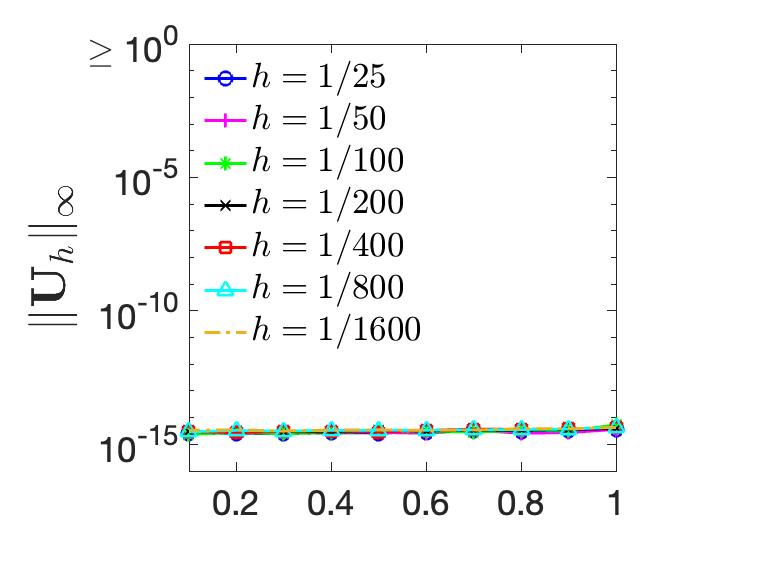} \hspace{-20.0pt}
		\includegraphics[width=2.5in,trim={0cm 0cm 1.75cm 0cm},clip]{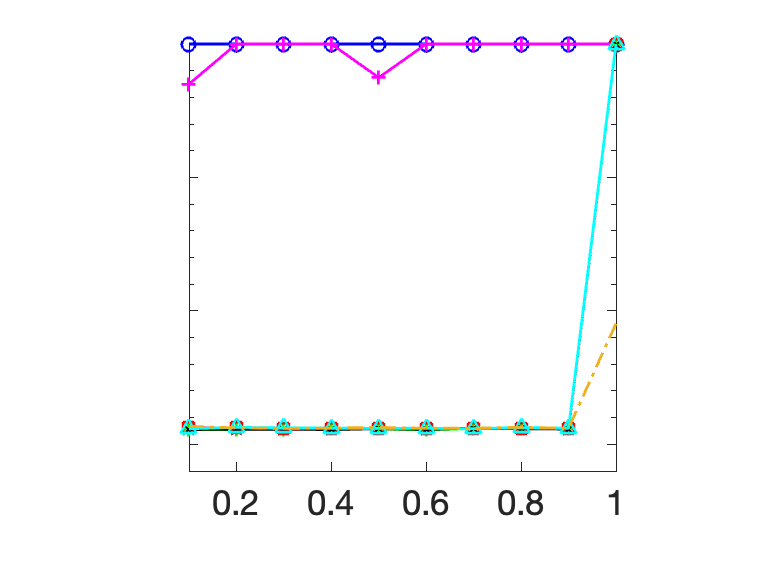}\hspace{-20pt}
		\includegraphics[width=2.5in,trim={0cm 0cm 1.75cm 0cm},clip]{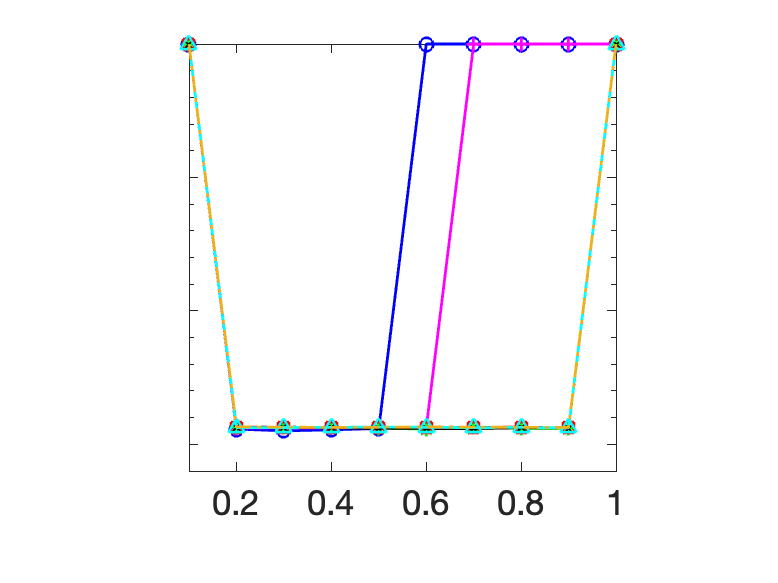}\hspace{-20pt}
		\includegraphics[width=2.5in,trim={0cm 0cm 1.75cm 0cm},clip]{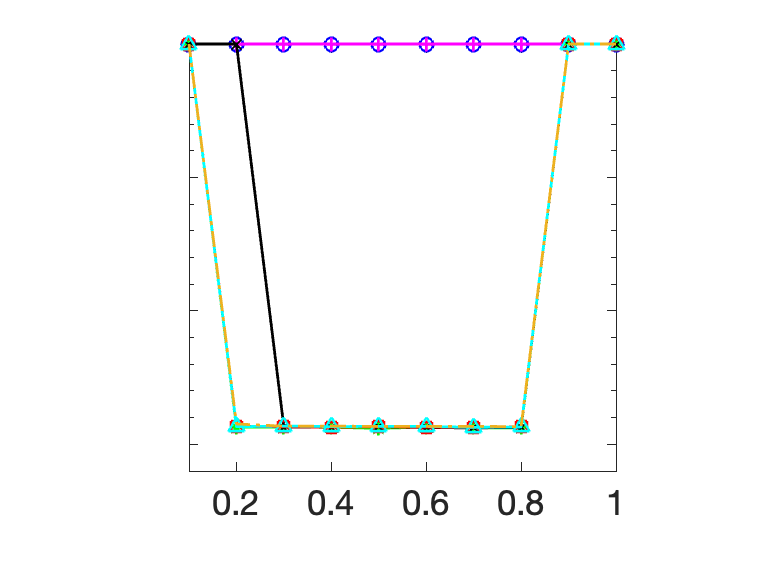}
	\end{adjustbox} 
	\begin{adjustbox}{max width=0.8\textwidth,center}
		\includegraphics[width=2.5in,trim={0cm 0cm 1.75cm 0cm},clip]{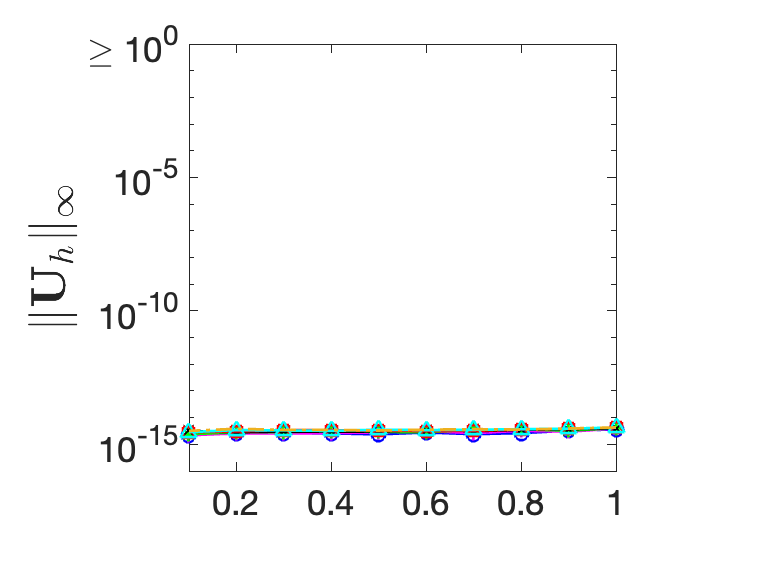} \hspace{-20pt}
		\includegraphics[width=2.5in,trim={0cm 0cm 1.75cm 0cm},clip]{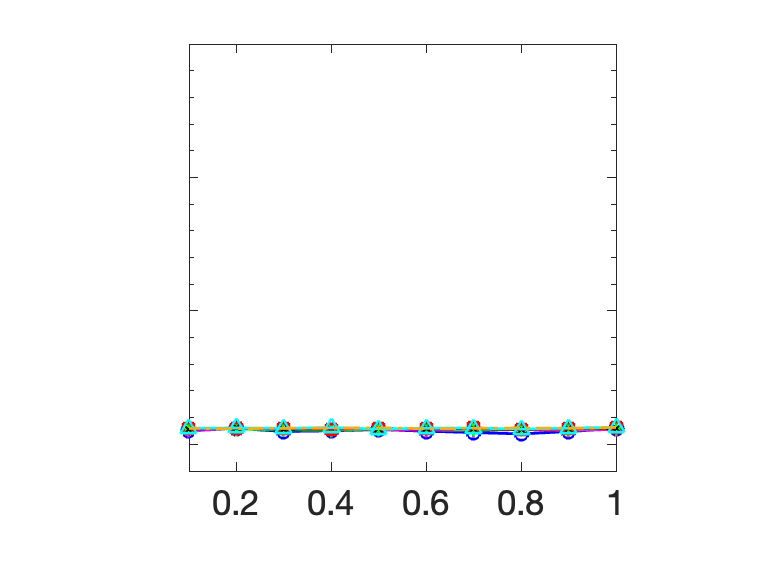}\hspace{-20pt}
		\includegraphics[width=2.5in,trim={0cm 0cm 1.75cm 0cm},clip]{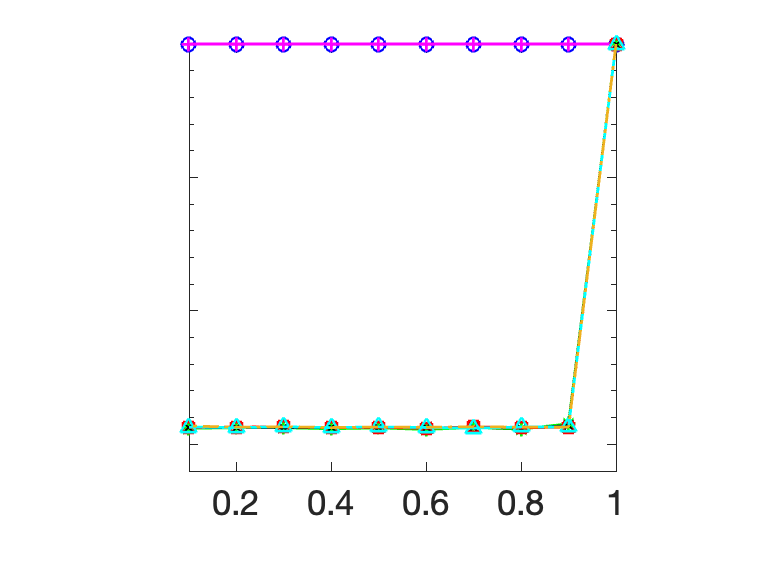}\hspace{-20pt}
		\includegraphics[width=2.5in,trim={0cm 0cm 1.75cm 0cm},clip]{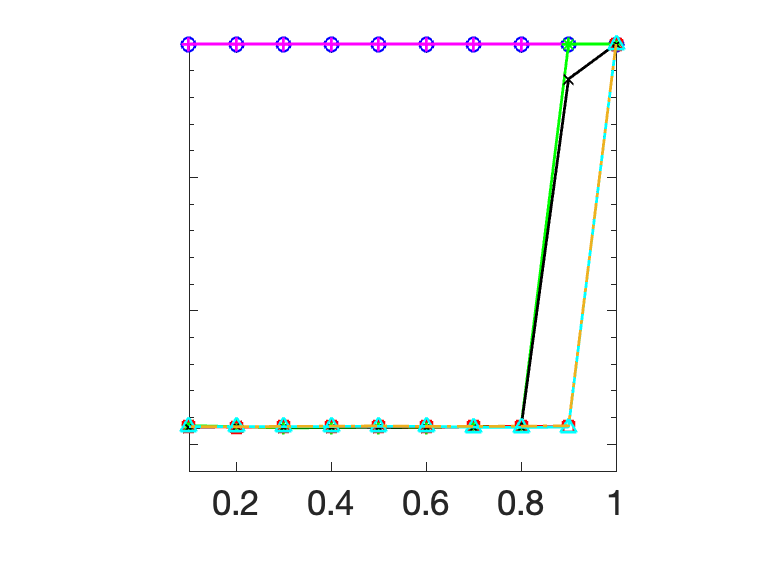}
	\end{adjustbox}  
	\begin{adjustbox}{max width=0.8\textwidth,center}
		\includegraphics[width=2.5in,trim={0cm 0cm 1.75cm 0cm},clip]{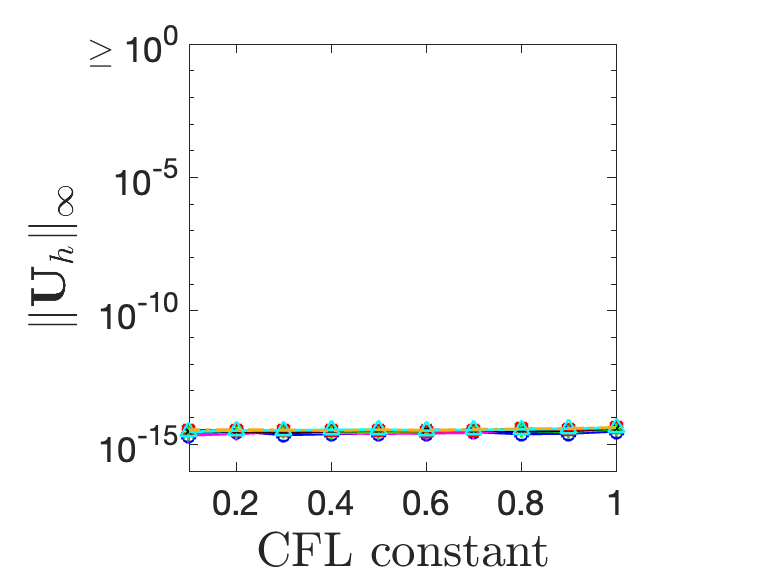} \hspace{-20pt}
		\includegraphics[width=2.5in,trim={0cm 0cm 1.75cm 0cm},clip]{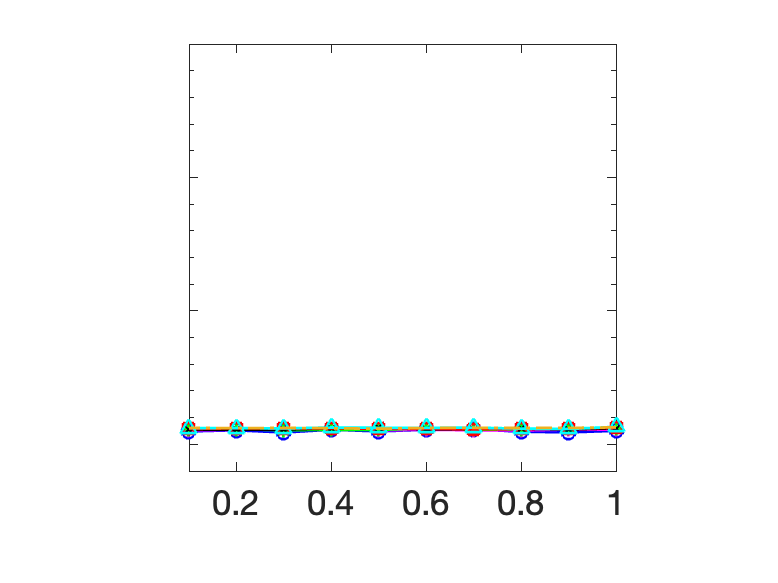}\hspace{-20pt}
		\includegraphics[width=2.5in,trim={0cm 0cm 1.75cm 0cm},clip]{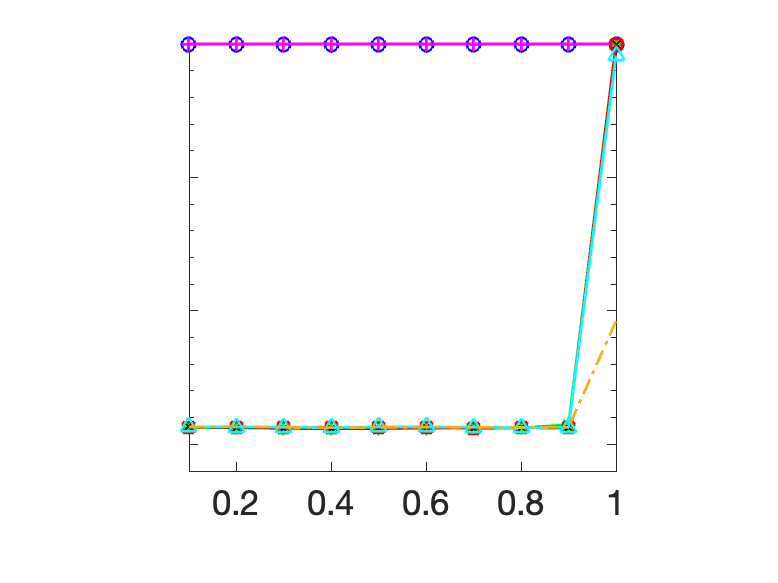}\hspace{-20pt}
		\includegraphics[width=2.5in,trim={0cm 0cm 1.75cm 0cm},clip]{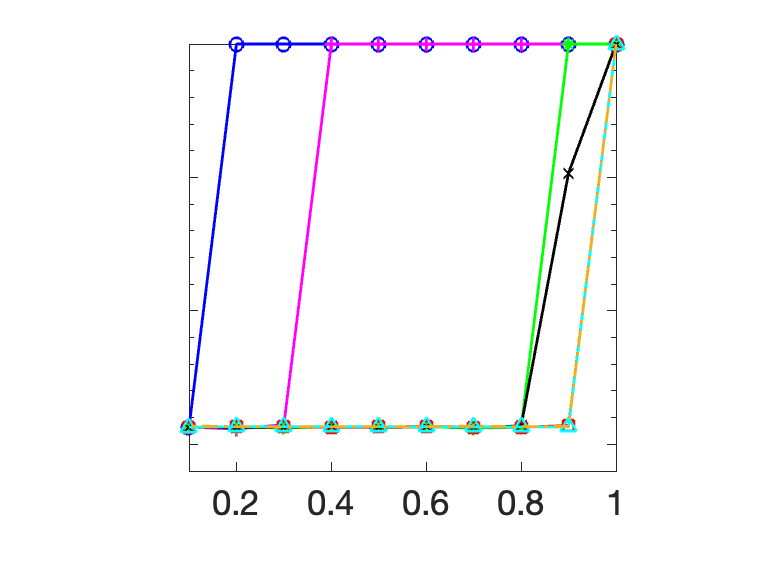}
	\end{adjustbox} 
	\begin{adjustbox}{max width=0.8\textwidth,center}
		\phantom{\includegraphics[width=2.5in,trim={0cm 0cm 1.75cm 0cm},clip]{max_norm_m_1_nb_der_2.png}} \hspace{-20pt}
		\includegraphics[width=2.5in,trim={0cm 0cm 1.75cm 0cm},clip]{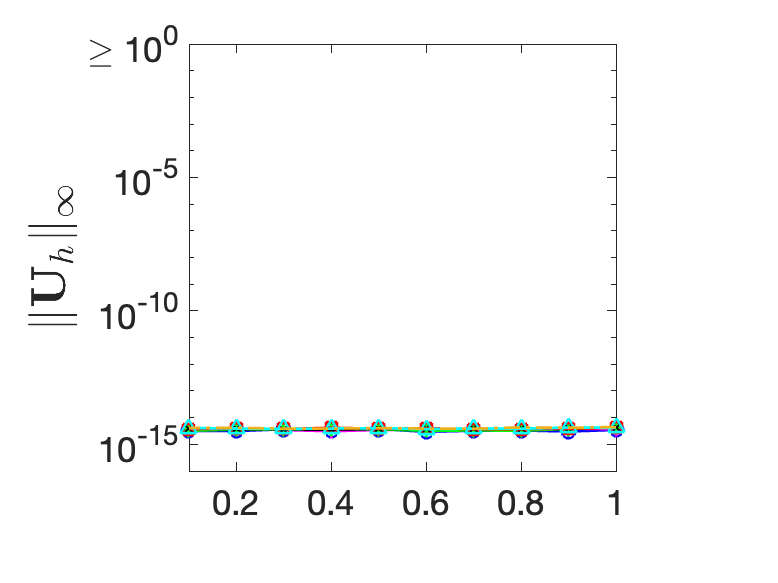}\hspace{-20pt}
		\includegraphics[width=2.5in,trim={0cm 0cm 1.75cm 0cm},clip]{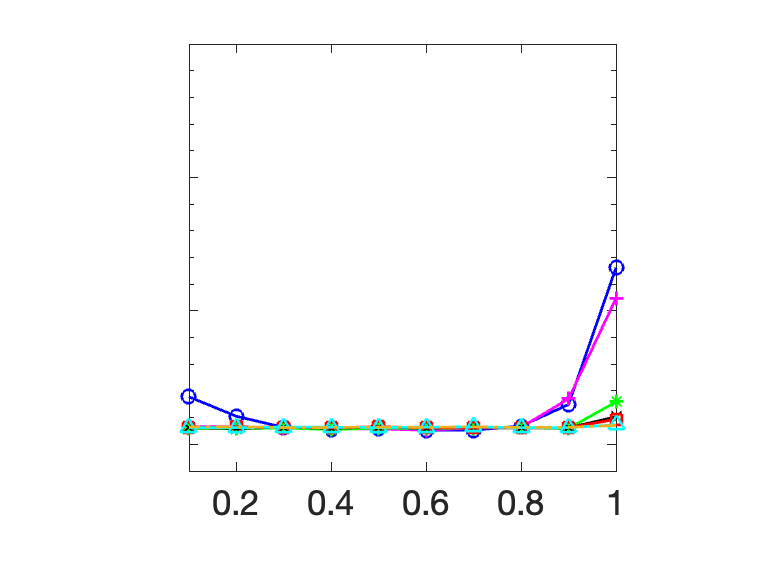}\hspace{-20pt}
		\includegraphics[width=2.5in,trim={0cm 0cm 1.75cm 0cm},clip]{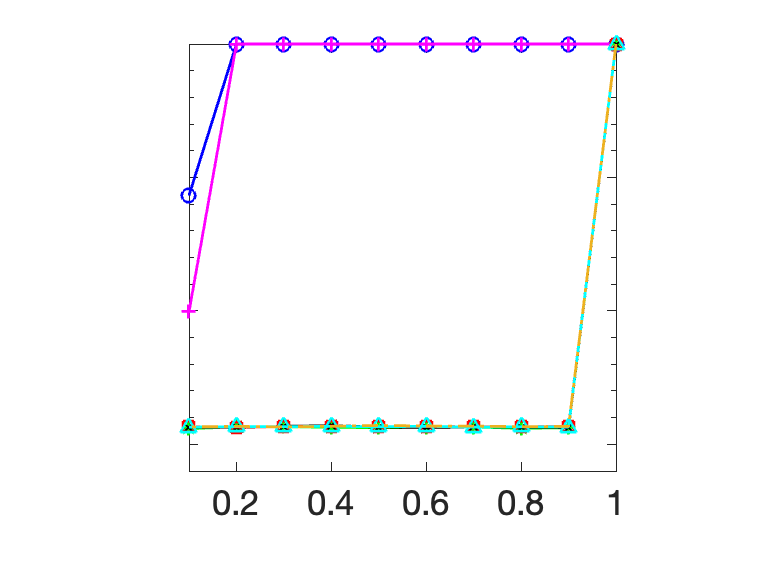}
	\end{adjustbox} 
	\begin{adjustbox}{max width=0.8\textwidth,center}
		\phantom{\includegraphics[width=2.5in,trim={0cm 0cm 1.75cm 0cm},clip]{max_norm_m_1_nb_der_2.png}} \hspace{-20pt}
		\includegraphics[width=2.5in,trim={0cm 0cm 1.75cm 0cm},clip]{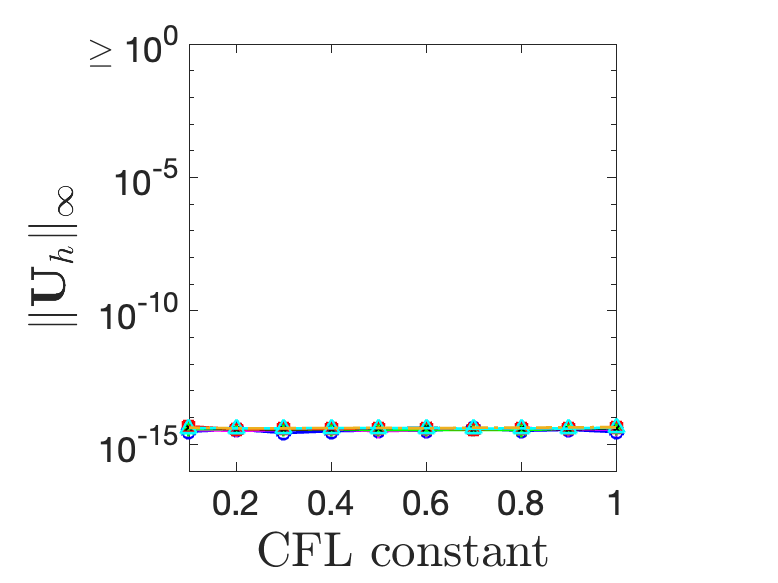}\hspace{-20pt}
		\includegraphics[width=2.5in,trim={0cm 0cm 1.75cm 0cm},clip]{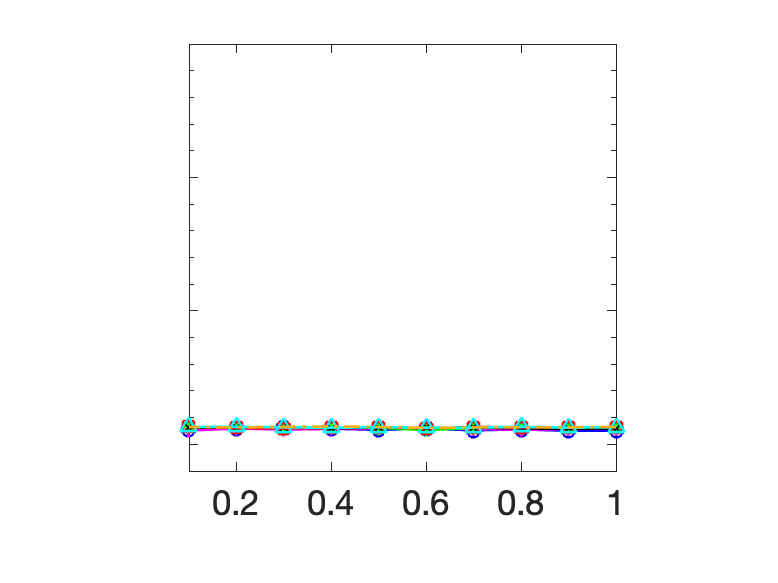}\hspace{-20pt}
		\includegraphics[width=2.5in,trim={0cm 0cm 1.75cm 0cm},clip]{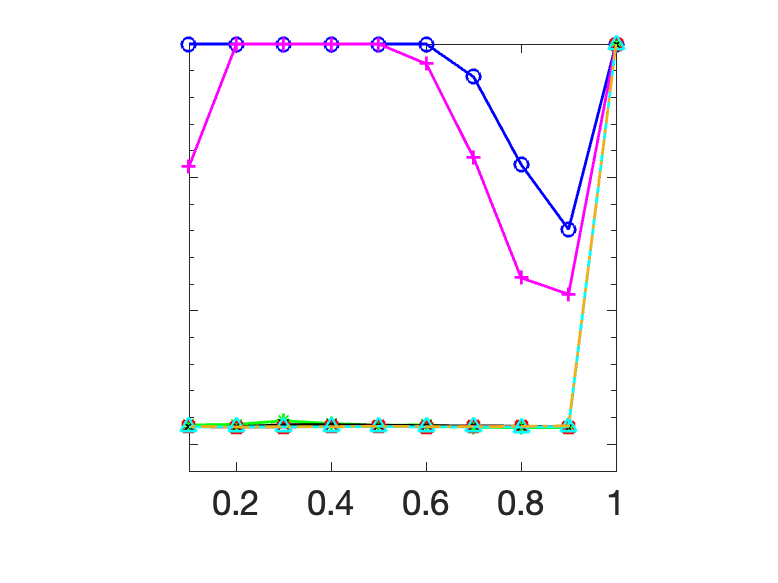}
	\end{adjustbox} 
	\begin{adjustbox}{max width=0.8\textwidth,center}
		\phantom{\includegraphics[width=2.5in,trim={0cm 0cm 1.75cm 0cm},clip]{max_norm_m_1_nb_der_2.png}} \hspace{-20pt}
		\phantom{\includegraphics[width=2.5in,trim={0cm 0cm 1.75cm 0cm},clip]{max_norm_m_2_nb_der_4.png}}\hspace{-20pt}
		\includegraphics[width=2.5in,trim={0cm 0cm 1.75cm 0cm},clip]{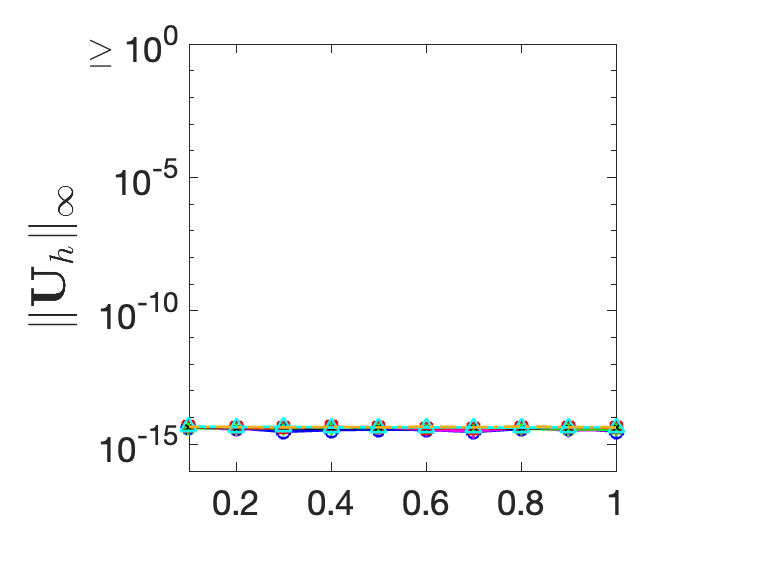}\hspace{-20pt}
		\includegraphics[width=2.5in,trim={0cm 0cm 1.75cm 0cm},clip]{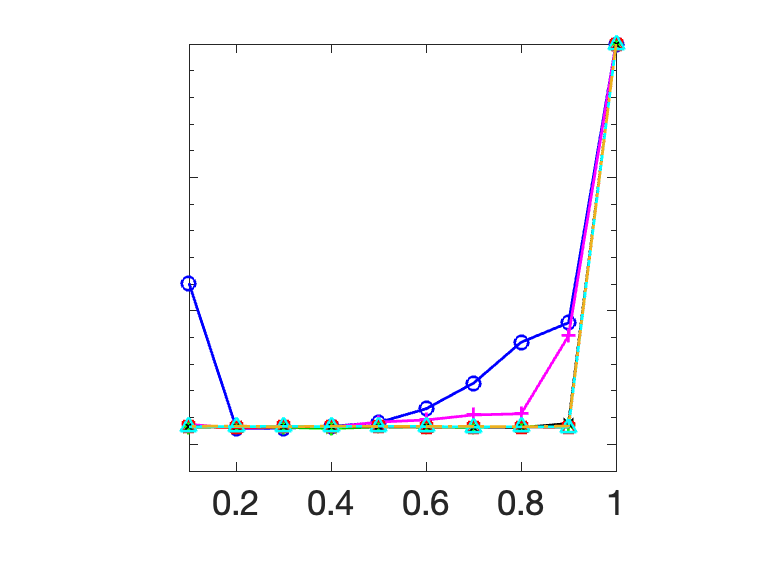}
	\end{adjustbox} 
	\begin{adjustbox}{max width=0.8\textwidth,center}
		\phantom{\includegraphics[width=2.5in,trim={0cm 0cm 1.75cm 0cm},clip]{max_norm_m_1_nb_der_2.png}} \hspace{-20pt}
		\phantom{\includegraphics[width=2.5in,trim={0cm 0cm 1.75cm 0cm},clip]{max_norm_m_2_nb_der_4.png}}\hspace{-20pt}
		\includegraphics[width=2.5in,trim={0cm 0cm 1.75cm 0cm},clip]{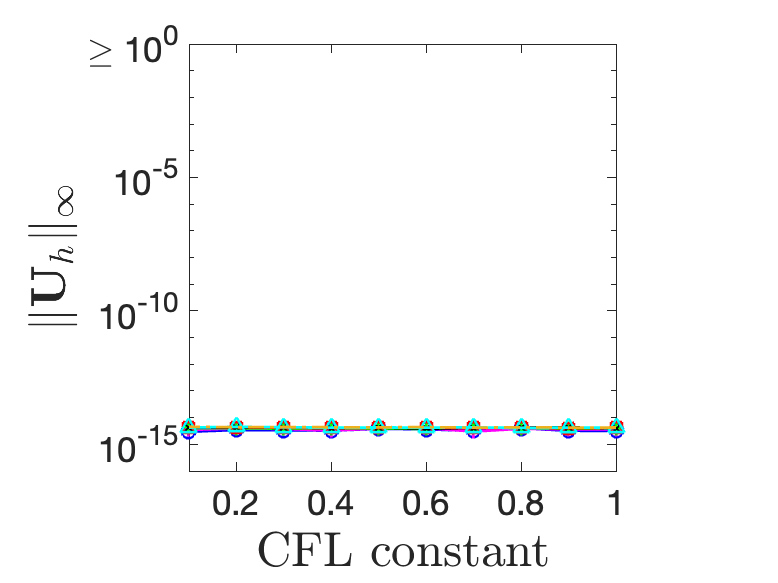}\hspace{-20pt}
		\includegraphics[width=2.5in,trim={0cm 0cm 1.75cm 0cm},clip]{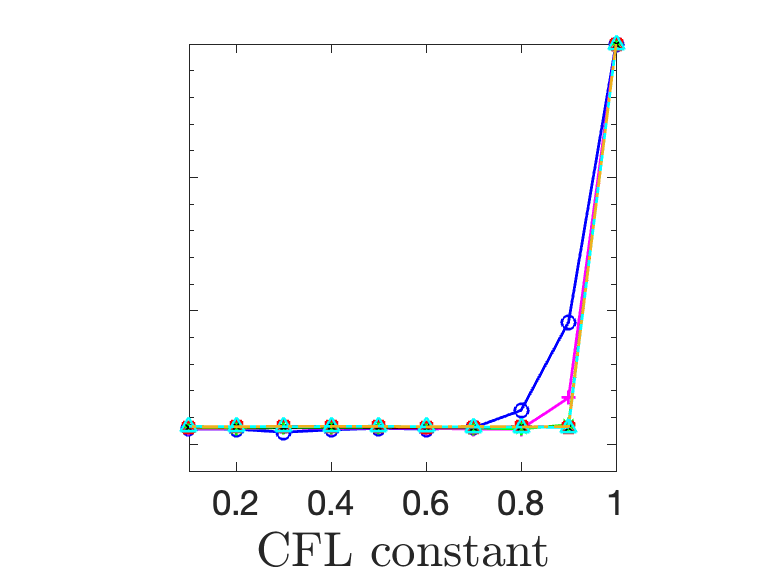}
	\end{adjustbox} 
       \caption{Maximum value of the maximum norm of the numerical solution over $10^6$ time steps as a function of the CFL constant for different mesh sizes.
       The columns are  for different $m$: 1 to 4 from the left to the right. The rows are for the maximum order of the considered spatial derivatives ($N_d$) at the boundary: 0 to 6 from the top to the bottom.}
       \label{fig:investigation_long_time_1D}
\end{figure}

{\color{black}
\subsubsection{Condition Number of Correction Function Matrices} \label{sec:cond_numb_1D}

In this subsection, 
    we investigate the condition number of the correction function matrices coming from the minimization procedure.
We consider the numerical example where only primal CF nodes are needed with the same settings as previously described. 
Fig.~\ref{fig:spatial_derivatives_condition_number_1D} 
    illustrates the maximum condition number of the CF matrices as a function of the CFL for different mesh sizes and values of $N_d$.
We observe that the condition number of the matrices $M$ increases as $N_d$ increases.
Note that the condition number of the CF matrices remains roughly constant with respect to the CFL constant for $m\leq2$. 
For $m=3$ and $m=4$, 
    it decreases when the CFL constant diminishes.
\begin{figure}   
	\centering
	\begin{adjustbox}{max width=1.0\textwidth,center}
		\includegraphics[width=2.5in,trim={0cm 0cm 1.75cm 0cm},clip]{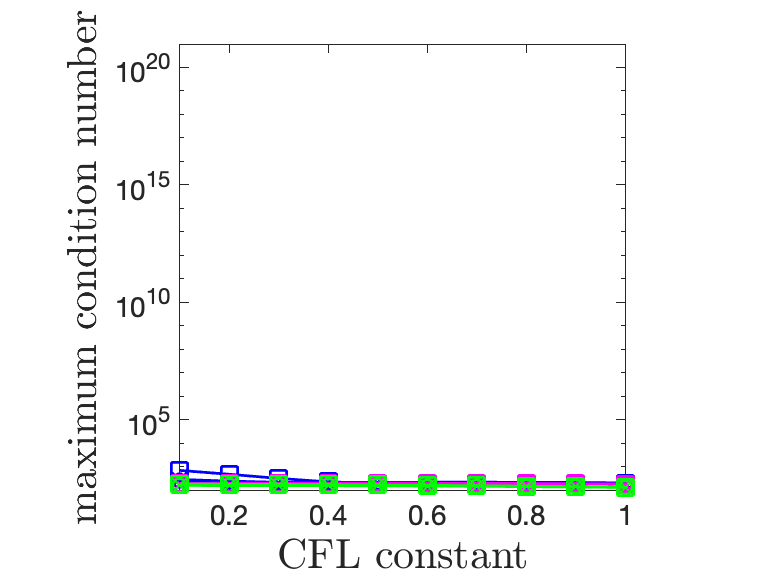} \hspace{-20.0pt}
		\includegraphics[width=2.5in,trim={0cm 0cm 1.75cm 0cm},clip]{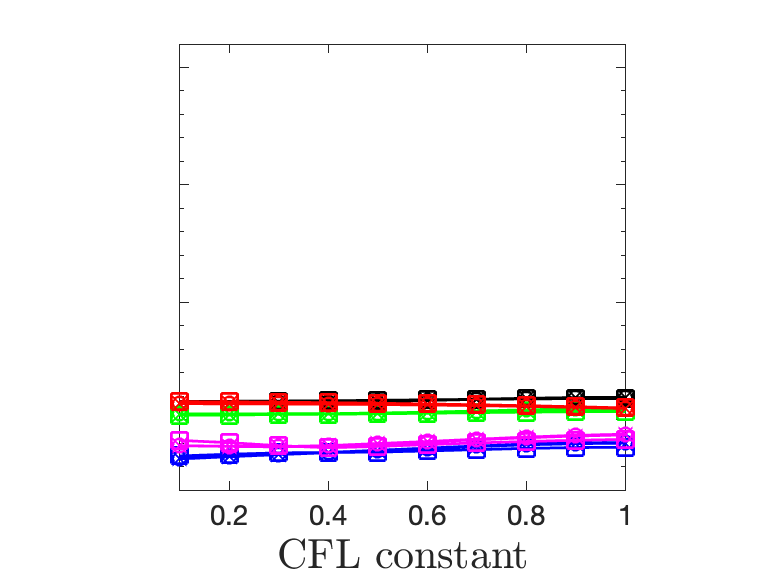}\hspace{-20pt}
		\includegraphics[width=2.5in,trim={0cm 0cm 1.75cm 0cm},clip]{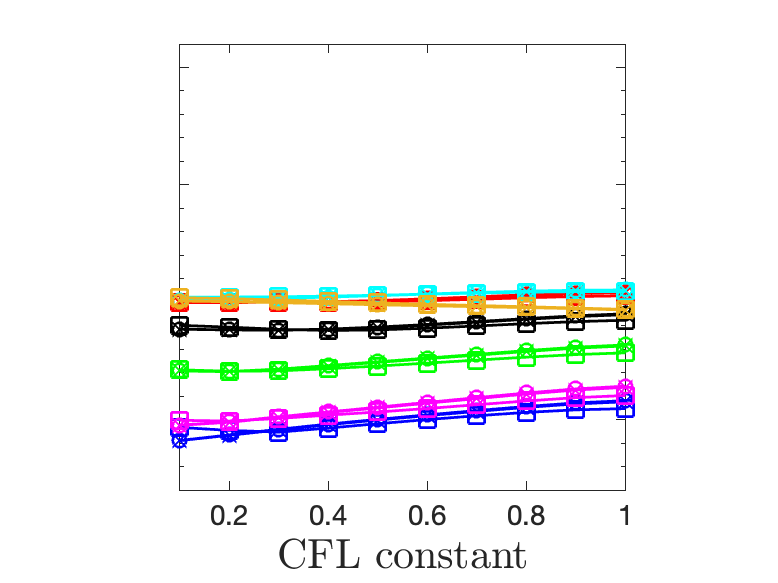}\hspace{-20pt}
		\includegraphics[width=2.5in,trim={0cm 0cm 1.75cm 0cm},clip]{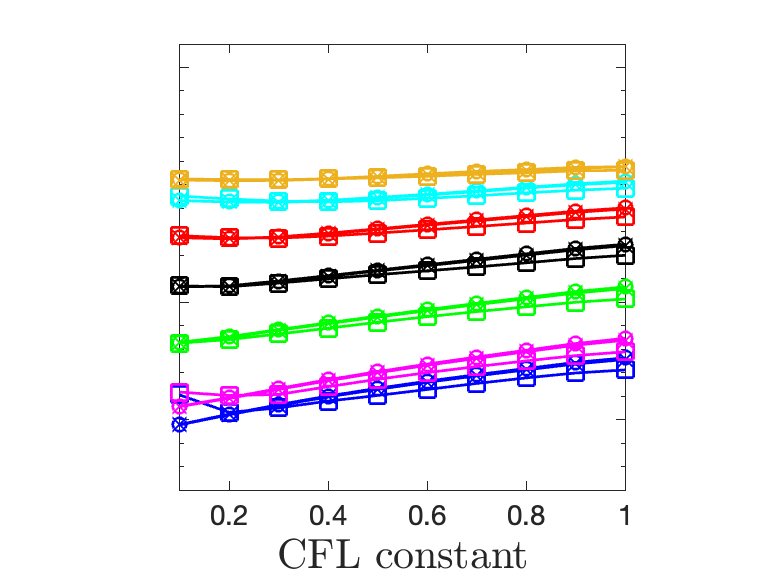}
	\end{adjustbox} 
       \caption{Maximum condition number of CF matrices as a function of the CFL constant for different mesh sizes and number of spatial derivatives at the boundary. 
       The value $m$ is 0 to 4 from the left to the right.
       The circle, cross and square markers stand for $\Delta x = \frac{1}{25}$, $\Delta x = \frac{1}{200}$ and $\Delta x = \frac{1}{1600}$. 
       The colors blue, magenta, green, black, red, cyan and orange are respectively for $N_d = 0 - 6$.}
\label{fig:spatial_derivatives_condition_number_1D}
\end{figure}

Fig.~\ref{fig:time_derivatives_condition_number_1D} illustrates the condition number of the CF matrices as a function of the CFL constant for different meshes and $N_d$ when the time derivatives are directly considered in the functional $\mathcal{B}$.
The condition number increases when $N_d$ increases and, 
    for $N_d > 2$,
    when the CFL constant diminishes. 
Note that,
    when we convert the time derivatives of the electromagnetic fields into spatial derivatives,
    it leads to better conditioned CF matrices.
This further motivates us to convert time derivative of the electromagnetic fields used in the boundary (or interface) conditions into spatial derivatives. 
\begin{figure}
	\centering
	\begin{adjustbox}{max width=1.0\textwidth,center}
		\includegraphics[width=2.5in,trim={0cm 0cm 1.75cm 0cm},clip]{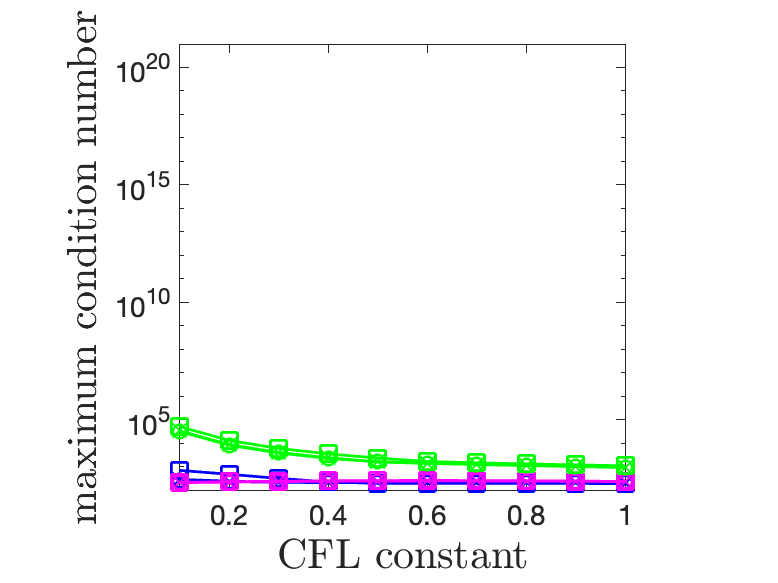} \hspace{-20.0pt}
		\includegraphics[width=2.5in,trim={0cm 0cm 1.75cm 0cm},clip]{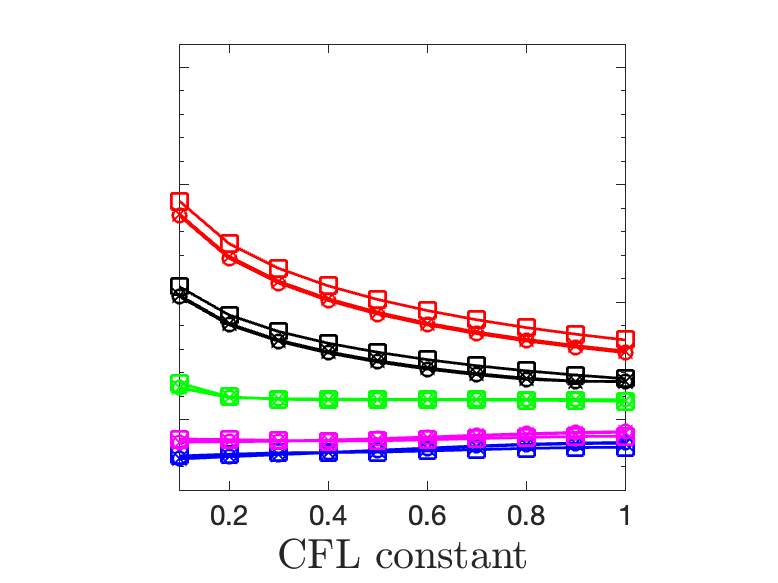}\hspace{-20pt}
		\includegraphics[width=2.5in,trim={0cm 0cm 1.75cm 0cm},clip]{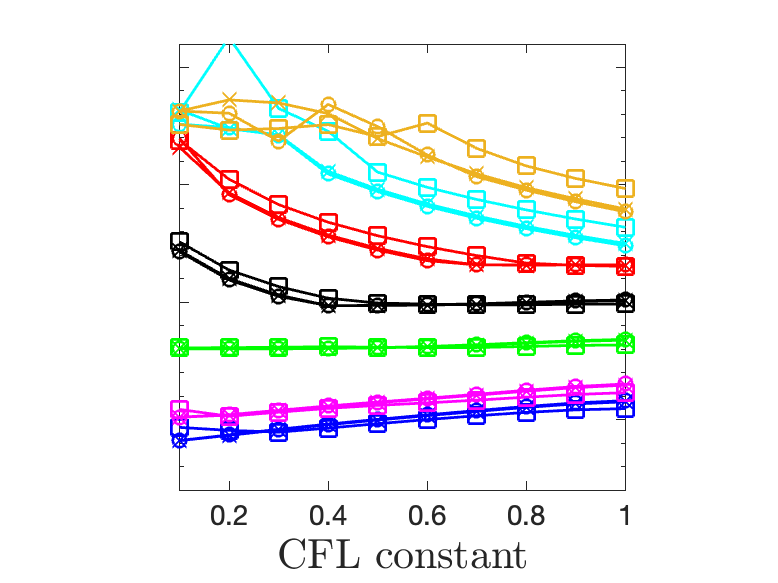}\hspace{-20pt}
		\includegraphics[width=2.5in,trim={0cm 0cm 1.75cm 0cm},clip]{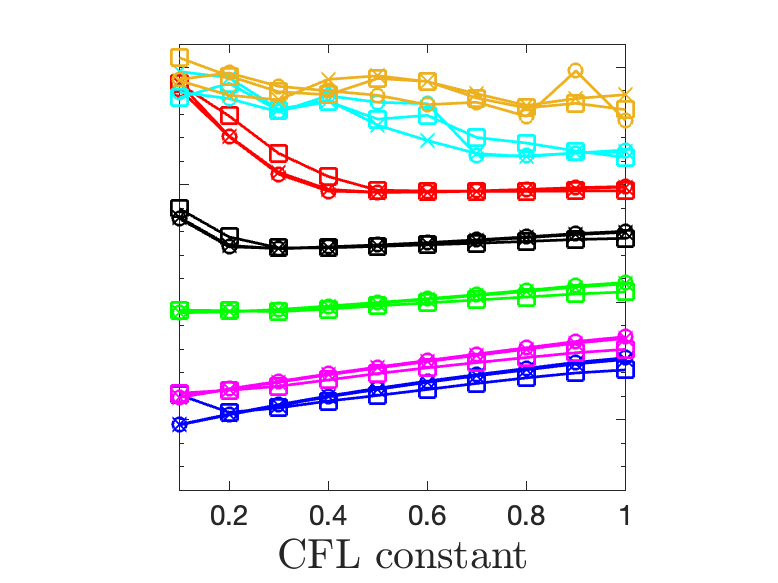}
	\end{adjustbox} 
       \caption{Maximum condition number of CF matrices as a function of the CFL constant for different mesh sizes and number of time derivatives at the boundary. 
       The value $m$ is 0 to 4 from the left to the right.
       The circle, cross and square markers stand for $\Delta x = \frac{1}{25}$, $\Delta x = \frac{1}{200}$ and $\Delta x = \frac{1}{1600}$. 
       The colors blue, magenta, green, black, red, cyan and orange are respectively for $N_d = 0 - 6$.}  \label{fig:time_derivatives_condition_number_1D}
\end{figure}
}
\subsubsection{Accuracy}

Let us now investigate the accuracy of the Hermite-Taylor correction function method with $m = 1 - 4$. 
The physical parameters are $\mu=1$ and $\epsilon=1$. 
We set $\Delta t = 0.9\,h$ for $m\leq 3$ and $N_d=0,2,5$ for respectively $m =1,2,3$. 
For $m=4$, 
    we use $\Delta t = 0.8\,h$ and $N_d =3$.
The physical domain is $\Omega = [\frac{\pi}{50}, 1- \frac{\pi}{100}]$ while the computational domain is $\Omega_c = [0,1]$.  
The time interval is $I=[0,20]$.
The initial and boundary conditions are chosen so that the solution to Maxwell's equations is 
\begin{equation}
	\begin{aligned}
		H(x,t) =&\,\, \sin(250\,x)\,\sin(250\,t), \\
		E(x,t) =&\,\, \cos(250\,x)\,\cos(250\,t).
	\end{aligned}
\end{equation}
The relative error in the $L^2$-norm is computed at the final time.
Fig.~\ref{fig:conv_embedded_boundary_1D} illustrates the convergence plots for $m = 1 - 4$.
We observe a rough $2\,m+1$ convergence order for all values of $m$. 
\begin{figure}   
	\centering
	\includegraphics[width=2.5in,trim={0.0cm 0cm 1.75cm 0cm},clip]{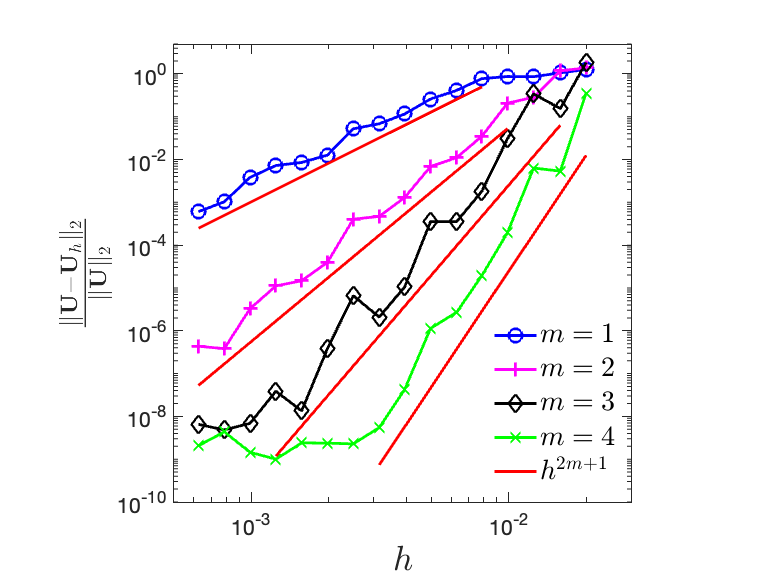} 
       \caption{Convergence plots for embedded boundary problems using the Hermite-Taylor correction function method with $1\leq m \leq 4$ at final time $t_f=20$. 
       Here $\mathbf{U} = [H, E]^T$. }
       \label{fig:conv_embedded_boundary_1D}
\end{figure}

	
\subsection{Hermite-Taylor Correction Function Methods in Two Dimensions}

We consider the two-dimensional simplification of Maxwell's equations \eqref{eq:TMz_syst}. 
The correction function polynomials are chosen to be elements of $\mathbb{Q}^{2\,m}$ to preserve the accuracy of the Hermite-Taylor method and we choose $c_H = 1$.
We set $\Delta L_\Gamma = \alpha h$ with $\alpha = 1.5$ for the local patches.

{\color{black}
Considering an interface problem with two different media, 
    the $L^2$-norm of the divergence of the magnetic field at the final time is computed using 
\begin{equation} \label{eq:computation_divergence_H}
    \|\nabla\cdot(\mu\mathbold{H}_h)\|_2 = \|\nabla\cdot(\mu\mathbold{H}^*)\|_2 + \sum_{q=0}^{N_{cf}-1} \Big(\|\nabla\cdot(\mu\mathbold{H}^+_{h,q})\|_2 + 
    \|\nabla\cdot(\mu\mathbold{H}^-_{h,q})\|_2\Big).
\end{equation}
Here $N_{cf}$ is the number of patches required for the CFM, $\mathbold{H}^*$ is the approximation of the magnetic field coming from the Hermite-Taylor method, 
    and 
    $\mathbold{H}_{h,q}^\pm$, 
    $q = 0,\dots,N_{cf}-1$, 
    are the CF magnetic field approximations associated with the subdomain $\Omega^\pm$.
The first term in \eqref{eq:computation_divergence_H} computes the $L^2$-norm of the divergence of magnetic field approximation coming from the Hermite-Taylor numerical solution.
Since the Hermite-Taylor cells do not cover a narrow band around the interface,
    as shown in Figure~\ref{fig:local_patch_interface_2D},
    we also consider the contribution of the divergence of the magnetic field approximations coming from the correction function method. 
The second term therefore sums the $L^2$-norm of the divergence of the CF magnetic field on each local patch. 
}

\subsubsection{Stability} \label{sec:stability_2d}

In this subsection, 
    we investigate the stability of 
    the Hermite-Taylor correction function method for embedded boundary and interface problems. 
To do so, 
    we use long time simulations.
We first consider an embedded boundary problem. 
The computational domain is $\Omega_c = [0,1]\times[0,1]$ and the embedded boundary $\Gamma$ is a circle with a radius of $0.3$ and centered 
	at $(0.5,0.5)$ that encloses the physical domain $\Omega$.	
{\color{black}
As in the one-dimensional experiments described above, 
	we consider the trivial solution for all electromagnetic fields but with initial data,
	namely the electromagnetic fields and the necessary derivatives, 
	to be random numbers in $(-10\,\epsilon_m,10\,\epsilon_m)$.
    }
The physical parameters are set to $\mu=1$ and $\epsilon=1$. 
	
Fig.~\ref{fig:investigation_long_time_2D_PEC} illustrates the evolution of the maximum norm of the 
	numerical solution over $10^5$ time steps for $m=1-2$ using 
    different mesh sizes,
    values of $N_d$ and CFL constants.
\begin{figure}   
	\centering
	\begin{adjustbox}{max width=1.0\textwidth,center}
		\includegraphics[width=2.5in,trim={0cm 0cm 1.75cm 0cm},clip]{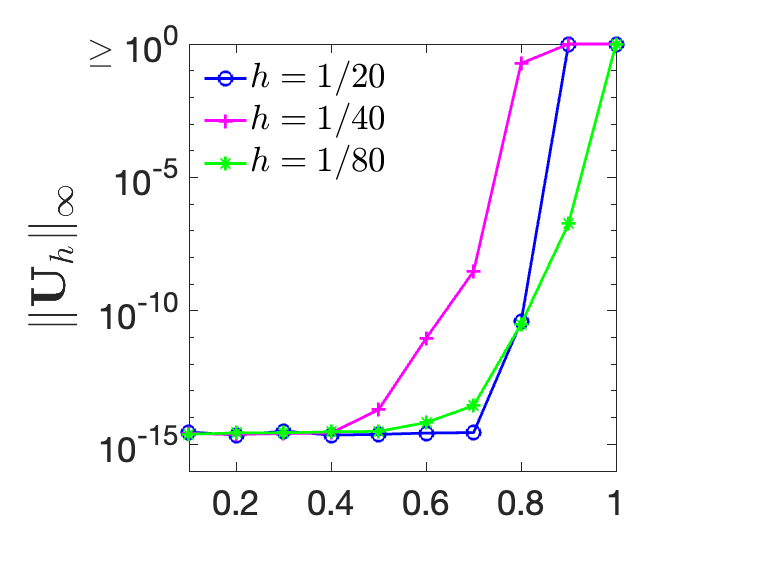} \hspace{-20.0pt}
		\includegraphics[width=2.5in,trim={0cm 0cm 1.75cm 0cm},clip]{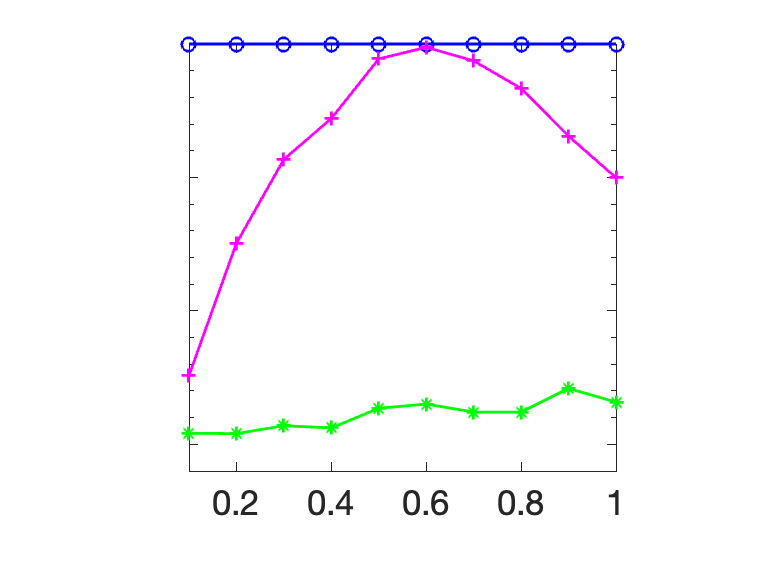}\hspace{-20pt}
		\includegraphics[width=2.5in,trim={0cm 0cm 1.75cm 0cm},clip]{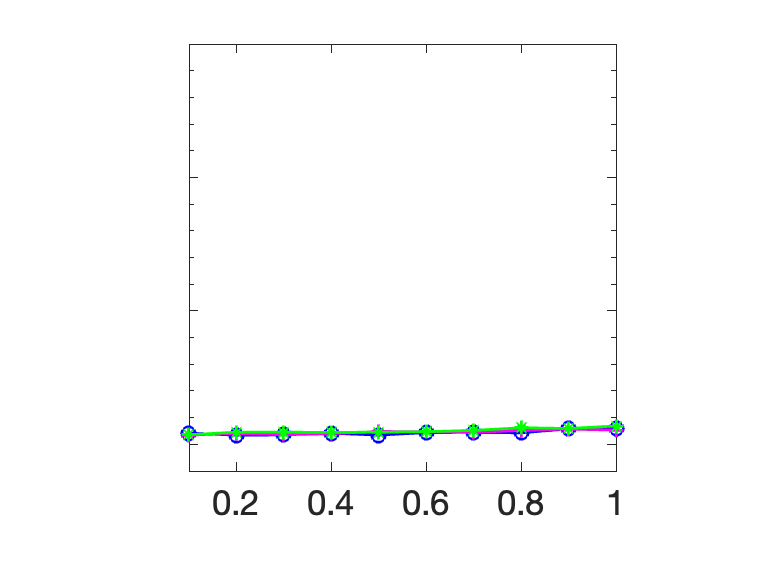}\hspace{-20pt}
		\phantom{{\includegraphics[width=2.5in,trim={0cm 0cm 1.75cm 0cm},clip]{pec_max_norm_m_1_nb_der_0.png}}}\hspace{-20pt}
	        \phantom{{\includegraphics[width=2.5in,trim={0cm 0cm 1.75cm 0cm},clip]{pec_max_norm_m_1_nb_der_0.png}}}
	\end{adjustbox} 
	\begin{adjustbox}{max width=1.0\textwidth,center}
		\includegraphics[width=2.5in,trim={0cm 0cm 1.75cm 0cm},clip]{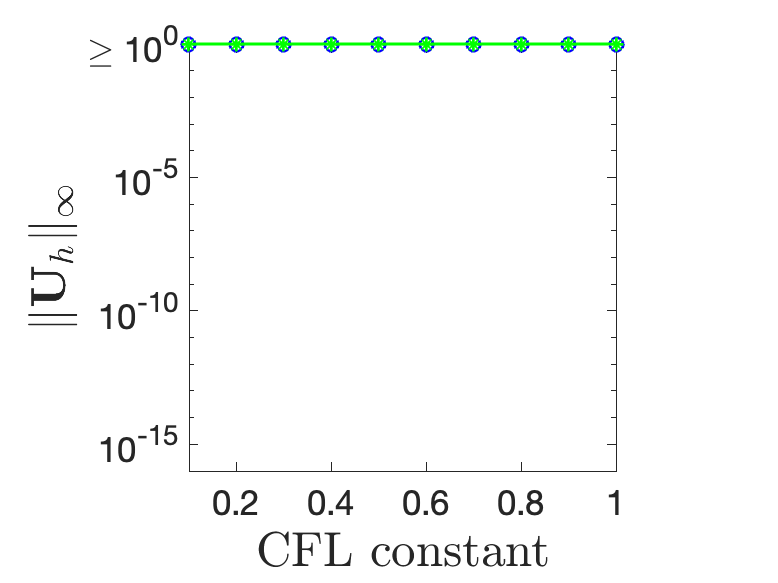} \hspace{-20pt}
		\includegraphics[width=2.5in,trim={0cm 0cm 1.75cm 0cm},clip]{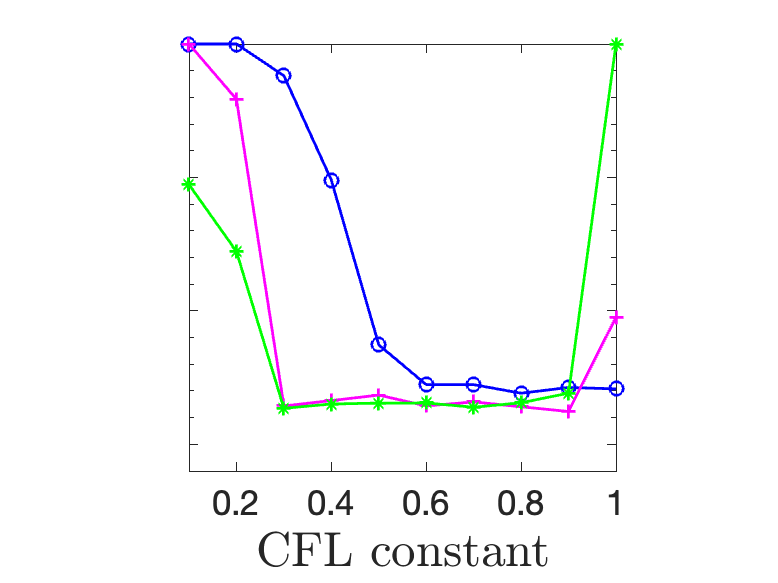}\hspace{-20pt}
		\includegraphics[width=2.5in,trim={0cm 0cm 1.75cm 0cm},clip]{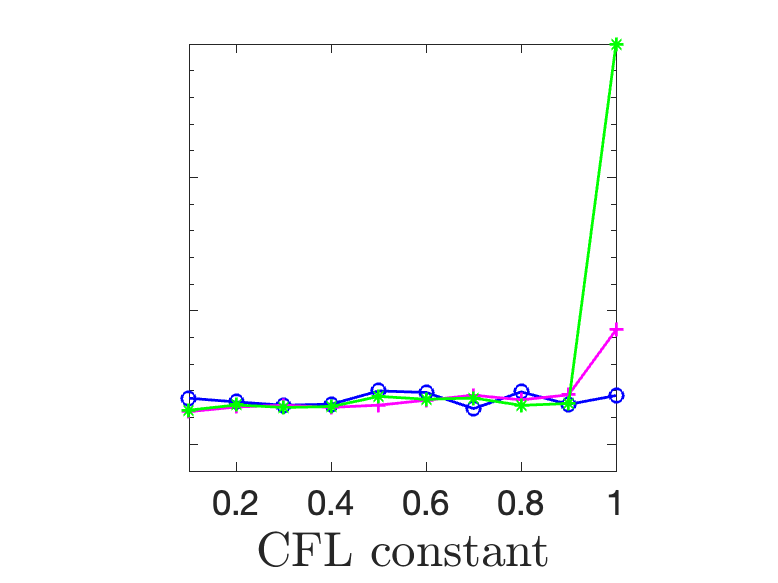}\hspace{-20pt}
		\includegraphics[width=2.5in,trim={0cm 0cm 1.75cm 0cm},clip]{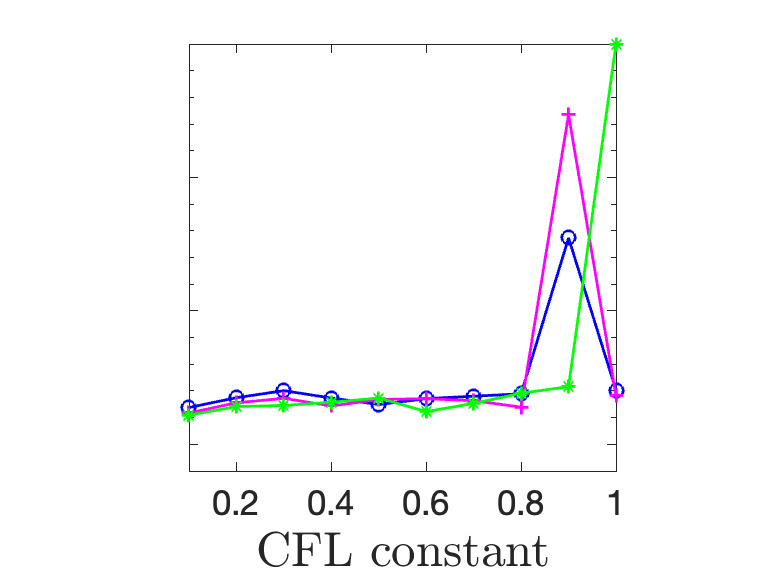}\hspace{-20pt}
		\includegraphics[width=2.5in,trim={0cm 0cm 1.75cm 0cm},clip]{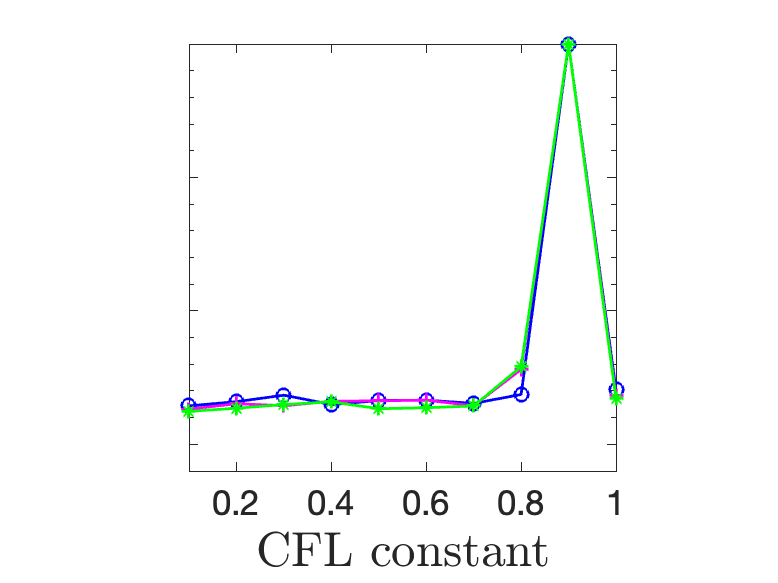}
	\end{adjustbox}  
       \caption{Maximum value of the maximum norm of the numerical solution over $10^5$ time steps as a function of the CFL constant for different mesh sizes in 2-D for PEC boundary conditions.
       The rows are for different $m$: 1 to 2 from the top to bottom.The columns are for the maximum order of the considered spatial derivatives ($N_d$) at the boundary: 0 to 4 from the left to the right.}
       \label{fig:investigation_long_time_2D_PEC}
\end{figure}
For $m=1$, 
    the Hermite-Taylor correction function method is stable for all considered mesh sizes and CFL constants when $N_d = 2$, 
    the maximum value of $N_d$ in this situation.
As for $m=2$, 
    the stability of the Hermite-Taylor correction function method is clearly improving as $N_d$ increases. 
More specifically, 
    the method for $m=2$ is stable for all considered meshes when the CFL constant is smaller than $0.8$ and $N_d \geq 2$.

Let us now consider interface problems. 
We investigate the stability using the same setup as for the embedded boundary problems.
In this situation, 
    the domain is $\Omega =[0,1]\times[0,1]$, 
	$\Gamma$ is an interface and the subdomain $\Omega^+$ is enclosed by $\Gamma$.
We consider periodic boundary conditions.
We are seeking a numerical solution in $\Omega^+$ and $\Omega^-$.
We consider $\mu^+=1$,
	$\epsilon^+=1$, 
	$\mu^-=2$ and $\epsilon^-=2.25$
\footnote{Note that we set $Z^+=Z^-=1$ and $c^+=c^-=1$ in the correction function functional for all numerical examples in this work.}.
Fig.~\ref{fig:investigation_long_time_2D_dielectric} illustrates the evolution of the maximum norm of the 
	numerical solution over $10^5$ time steps for $m=1-2$ using 
    different mesh sizes,
    values of $N_d$ and CFL constants.
For $m=1$ and $N_d\geq 1$, 
    the Hermite-Taylor correction function method is stable for all considered CFL constants. 
Regarding $m=2$, 
    the method is stable for $N_d\geq 3$ and a CFL constant under 0.8. 
\begin{figure}   
	\centering
	\begin{adjustbox}{max width=1.0\textwidth,center}
		\includegraphics[width=2.5in,trim={0cm 0cm 1.75cm 0cm},clip]{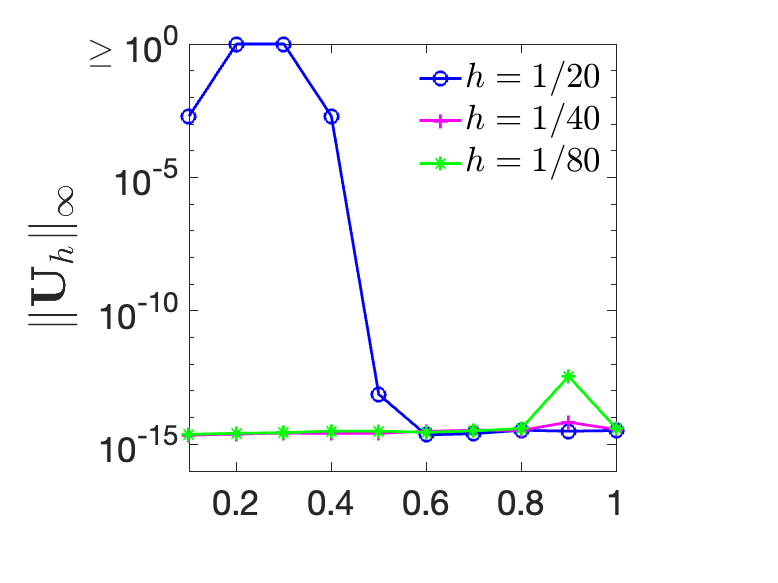} \hspace{-20.0pt}
		\includegraphics[width=2.5in,trim={0cm 0cm 1.75cm 0cm},clip]{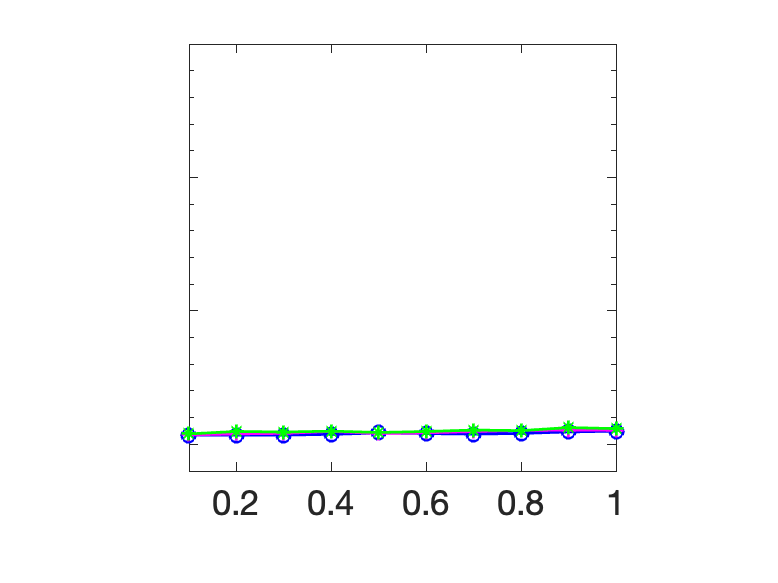}\hspace{-20pt}
		\includegraphics[width=2.5in,trim={0cm 0cm 1.75cm 0cm},clip]{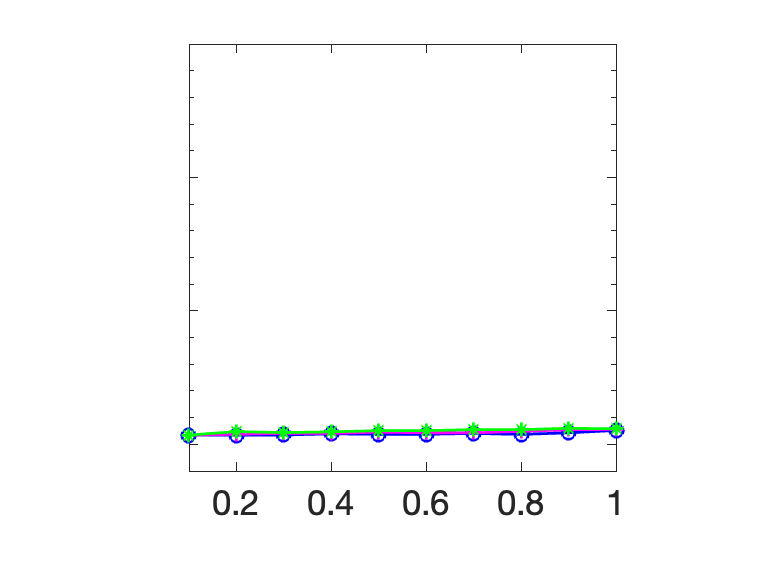}\hspace{-20pt}
		\phantom{{\includegraphics[width=2.5in,trim={0cm 0cm 1.75cm 0cm},clip]{interface_max_norm_m_1_nb_der_0.png}}}\hspace{-20pt}
	        \phantom{{\includegraphics[width=2.5in,trim={0cm 0cm 1.75cm 0cm},clip]{interface_max_norm_m_1_nb_der_0.png}}}
	\end{adjustbox} 
	\begin{adjustbox}{max width=1.0\textwidth,center}
		\includegraphics[width=2.5in,trim={0cm 0cm 1.75cm 0cm},clip]{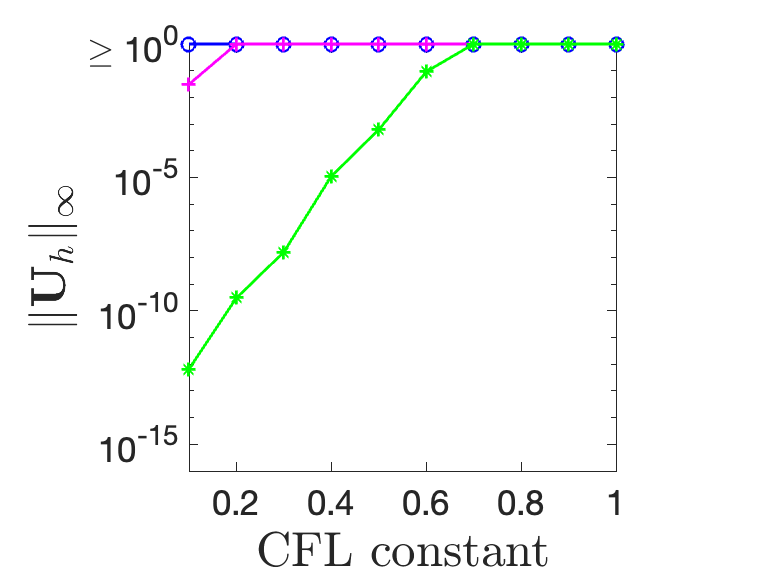} \hspace{-20pt}
		\includegraphics[width=2.5in,trim={0cm 0cm 1.75cm 0cm},clip]{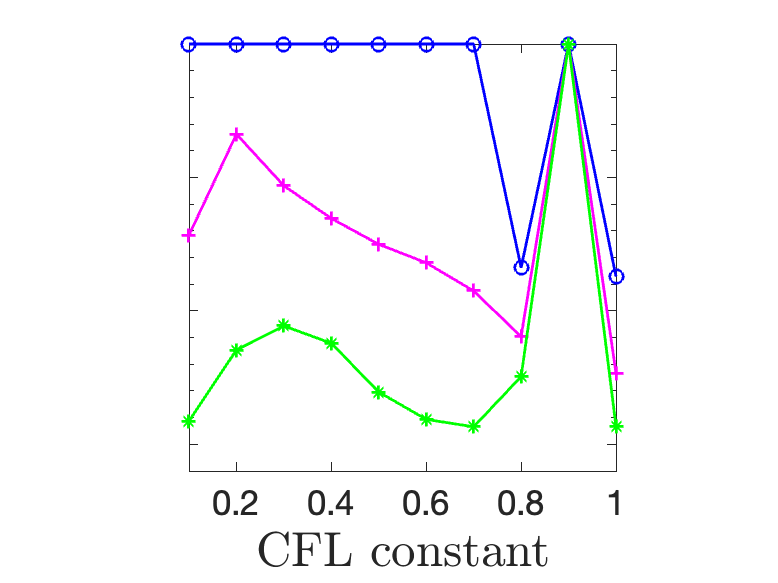}\hspace{-20pt}
		\includegraphics[width=2.5in,trim={0cm 0cm 1.75cm 0cm},clip]{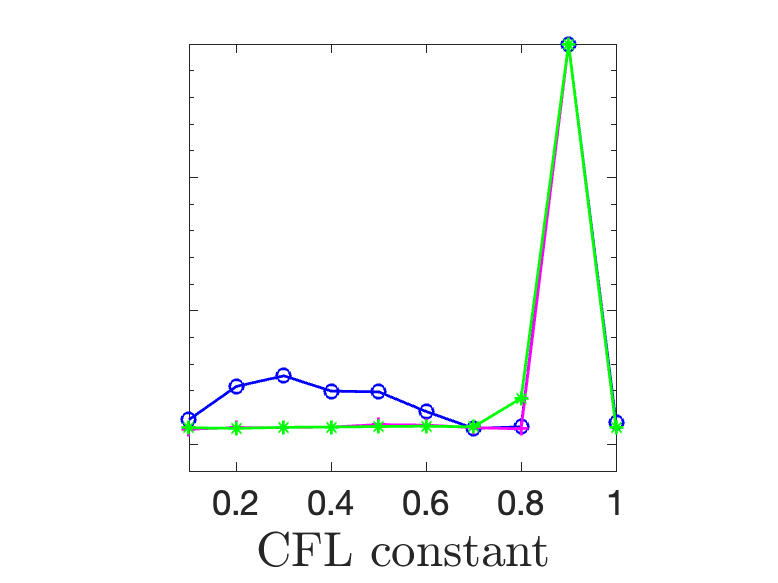}\hspace{-20pt}
		\includegraphics[width=2.5in,trim={0cm 0cm 1.75cm 0cm},clip]{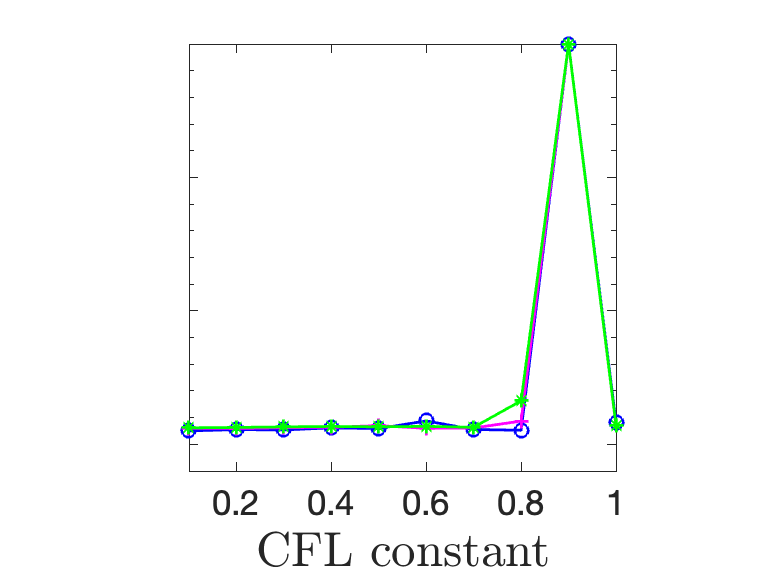}\hspace{-20pt}
		\includegraphics[width=2.5in,trim={0cm 0cm 1.75cm 0cm},clip]{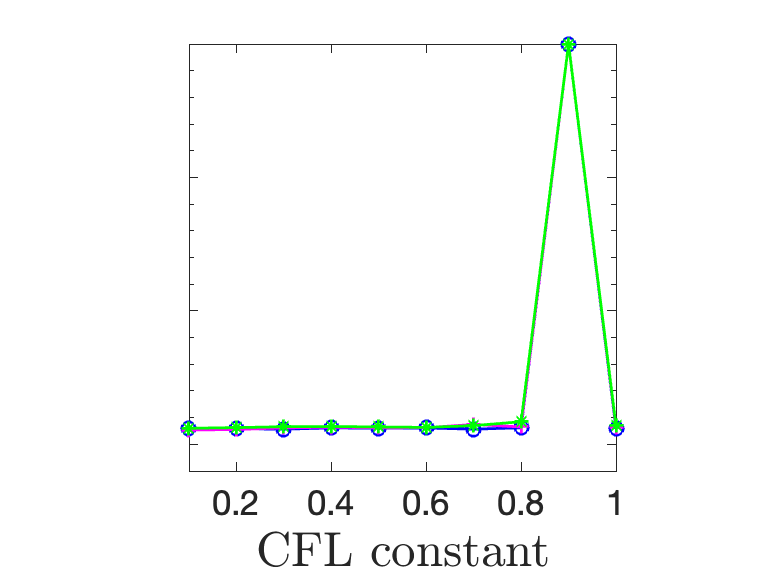}
	\end{adjustbox}  
       \caption{Maximum value of the maximum norm of the numerical solution over $10^5$ time steps as a function of the CFL constant for different mesh sizes in 2-D for interface conditions.
       The rows are for different $m$: 1 to 2 from the top to bottom.The columns are for the maximum order of the considered spatial derivatives ($N_d$) at the boundary: 0 to 4 from the left to the right.}
       \label{fig:investigation_long_time_2D_dielectric}
\end{figure}
{\color{black}
\subsubsection{Condition Number of Correction Function Matrices}

We first investigate the condition number of the CF matrices for the embedded boundary problem described previously.
The maximum condition number of CFM matrices as a function of the CFL constant for different mesh sizes 
    and values of $N_d$ is illustrated in Fig.~\ref{fig:investigation_condition_number_2D_PEC}. 
As in the one space dimension case,
    the condition number increases as $m$ increases. 
For $m=2$, 
    the condition number increases when the CFL constant diminishes, 
    suggesting that we should choose the largest CFL constant that leads to a stable method. 
As for $N_d$,
    we observe that the CFM matrices are better conditioned when $N_d$ goes from 0 to 1.
However, 
    for $N_d>1$,
    the condition number increases as $N_d$ increases.
For $m=2$, 
	we notice that the condition number increases as  the mesh size decreases.
    
Let us now consider the interface problem described in the previous subsection. 
Fig.~\ref{fig:investigation_condition_number_2D_dielectric} illustrates the condition number of 
    the CFM matrices as a function of the CFL constant for different meshes and values of $N_d$. 
For $m=1$, 
	we observe an improvement of the condition number when $N_d$ goes from $0$ to $1$ but 
	it increases as the CFL constant diminishes for $N_d=0-1$.
For $m=2$ and $N_d<3$, 
    the condition number diminishes as $N_d$ increases while it increases as the CFL constant diminishes.
For $m=2$ and $N_d=4$,
    the condition number decreases as the CFL decreases.

\begin{figure}   
	\centering
	\begin{adjustbox}{max width=1.0\textwidth,center}
		\includegraphics[width=2.5in,trim={0cm 0cm 1.75cm 0cm},clip]{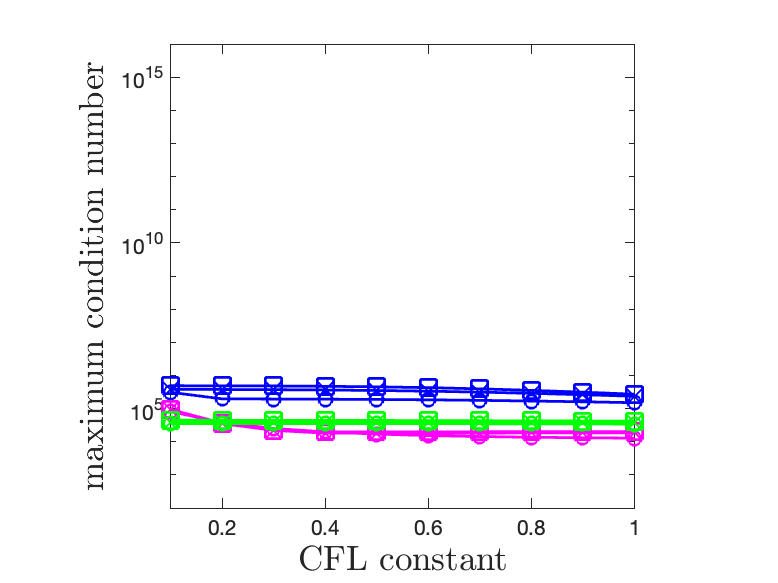} \hspace{-20.0pt}
		\includegraphics[width=2.5in,trim={0cm 0cm 1.75cm 0cm},clip]{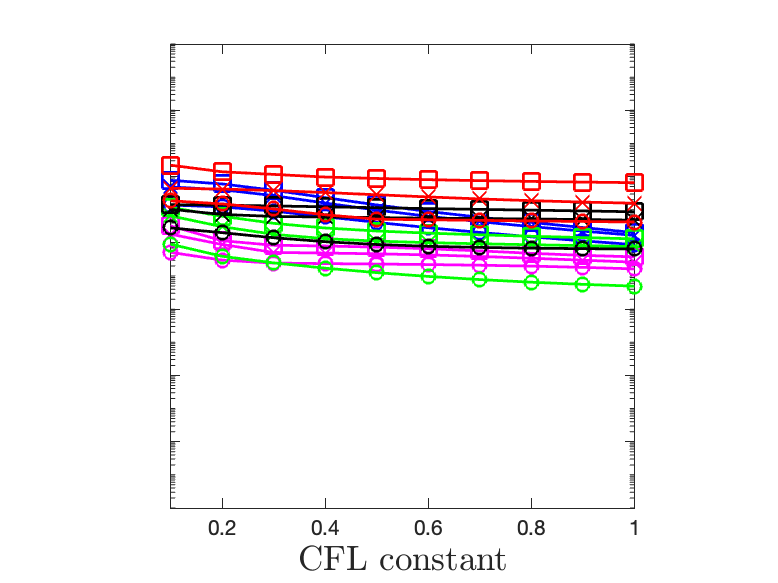}
	\end{adjustbox} 
       \caption{Maximum condition number of CF matrices as a function of the CFL constant for different mesh sizes and number of derivatives in 2-D for PEC boundary conditions. 
       The left and right plots illustrate the results for respectively $m=1$ and $m=2$.
       The circle, cross and square markers stand for $\Delta x = \frac{1}{25}$, $\Delta x = \frac{1}{200}$ and $\Delta x = \frac{1}{1600}$. 
       The colors blue, magenta, green, black and red are respectively for $N_d = 0 - 4.$}
       \label{fig:investigation_condition_number_2D_PEC}
\end{figure}

Due to the condition number of the CF matrices, 
    we limit the value of $m$ to 2 in the multi-dimensional case. 
Nevertheless,
    we obtain third-order and fifth-order Hermite-Taylor correction function methods that are stable under a CFL constant of around 1 and 0.8 respectively.
Future research will explore a collocation method, 
    similar to what is done in \cite{Zhou2024},
    to obtain better conditioned linear systems for the correction functions and therefore to consider larger values of $m$.
\begin{figure}   
	\centering
	\begin{adjustbox}{max width=1.0\textwidth,center}
		\includegraphics[width=2.5in,trim={0cm 0cm 1.75cm 0cm},clip]{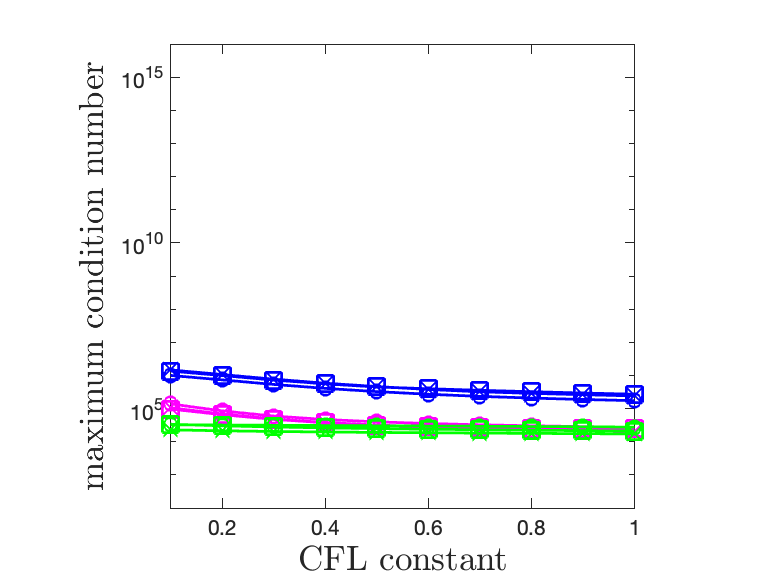} \hspace{-20.0pt}
		\includegraphics[width=2.5in,trim={0cm 0cm 1.75cm 0cm},clip]{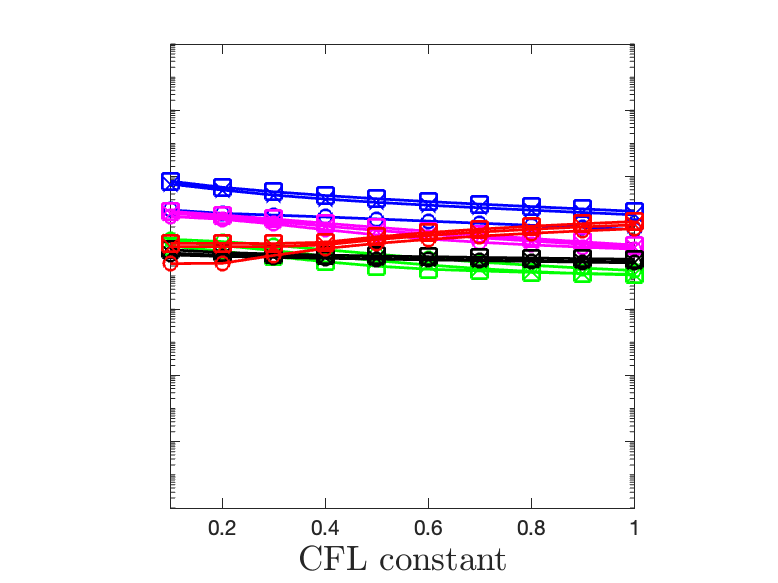}
	\end{adjustbox} 
       \caption{Maximum condition number of CF matrices as a function of the CFL constant for different mesh sizes and number of derivatives in 2-D for interface conditions. 
       The left and right plots illustrate the results for respectively $m=1$ and $m=2$.
       The circle, cross and square markers stand for $\Delta x = \frac{1}{25}$, $\Delta x = \frac{1}{200}$ and $\Delta x = \frac{1}{1600}$. 
       The colors blue, magenta, green, black and red are respectively for $N_d = 0 - 4$.}
       \label{fig:investigation_condition_number_2D_dielectric}
\end{figure}
}

\subsubsection{Accuracy}

Let us now investigate the accuracy of the Hermite-Taylor correction function method. 
Since the degree of the correction function polynomials is $2\,m$, 
	we should expect third and fifth order convergence, respectively, for Hermite-Taylor correction function methods 
	with $m=1$ and $m=2$.
We use a CFL constant of 0.9 and 0.7 for the third and fifth order Hermite-Taylor correction function methods.

\subsubsection*{Embedded Boundary Problems}

\begin{figure}
 	\centering
	\includegraphics[width=2.0in]{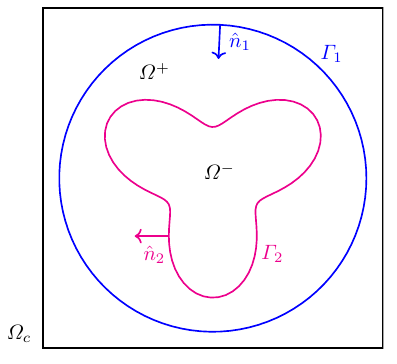}
       \caption{Geometry of the computational domain $\Omega_c$.}
	\label{fig:geo_pblm_5}
\end{figure}
We first consider the computational domain $\Omega_c = [-0.1,1.1]\times[-0.1,1.1]$,
	illustrated in Fig.~\ref{fig:geo_pblm_5},
	and the time interval is $I=[0,1]$.
In this situation, 
	we are seeking a numerical solution in $\Omega^+$, 
    enclosed by $\Gamma_1$ and $\Gamma_2$.
The physical parameters are $\mu=1$ and $\epsilon=1$. 
The initial data, 
	and boundary conditions on $\Gamma_1$ and $\Gamma_2$ are chosen so that the solution in $\Omega^+$ is given by 
\begin{equation}
	\begin{aligned}
	H_x =&\,\, -\frac{1}{\sqrt{2}}\sin(\omega\,\pi\,x)\,\cos(\omega\,\pi\,y)\,\sin(\sqrt{2}\,\omega\,\pi\,t), \\
	H_y =&\,\, \frac{1}{\sqrt{2}}\cos(\omega\,\pi\,x)\,\sin(\omega\,\pi\,y)\,\sin(\sqrt{2}\,\omega\,\pi\,t), \\ 
	E_z =&\,\, \sin(\omega\,\pi\,x)\,\sin(\omega\,\pi\,y)\,\cos(\sqrt{2}\,\omega\,\pi\,t),
	\end{aligned}
\end{equation}
	where $\omega = 20$.
{\color{black} Fig.~\ref{fig:conv_embedded_boundary} and Fig.~\ref{fig:conv_divH_embedded_boundary} show the convergence plots of the electromagnetic fields and the divergence of the magnetic field in the $L^2$-norm for $m=1$ and $m=2$.} 
\begin{figure}   
	\centering
	\begin{adjustbox}{max width=1.0\textwidth,center}
		\includegraphics[width=2.5in,trim={0.0cm 0cm 1.75cm 0cm},clip]{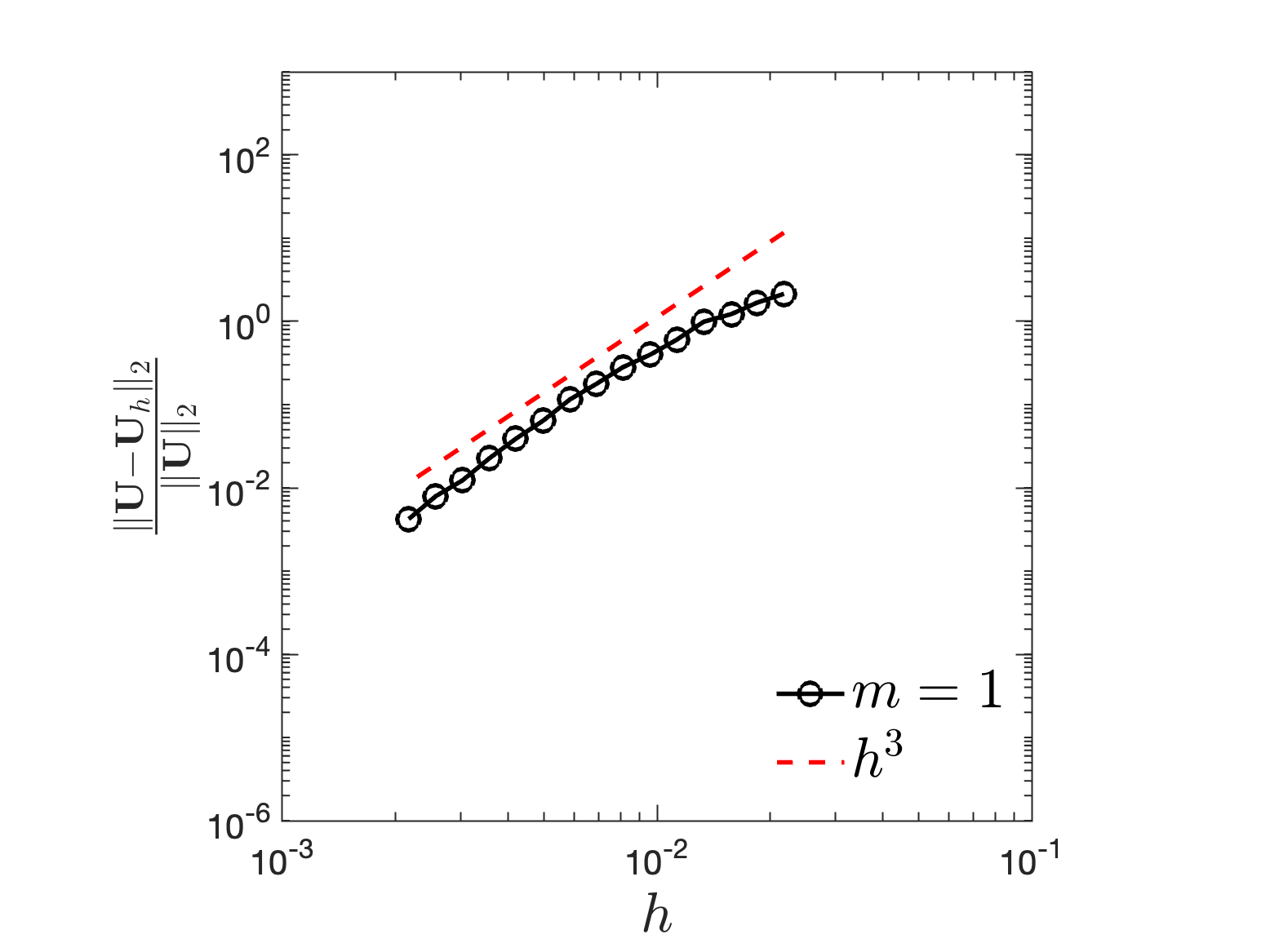} \hspace{-15pt}
		\includegraphics[width=2.5in,trim={0.0cm 0cm 1.75cm 0cm},clip]{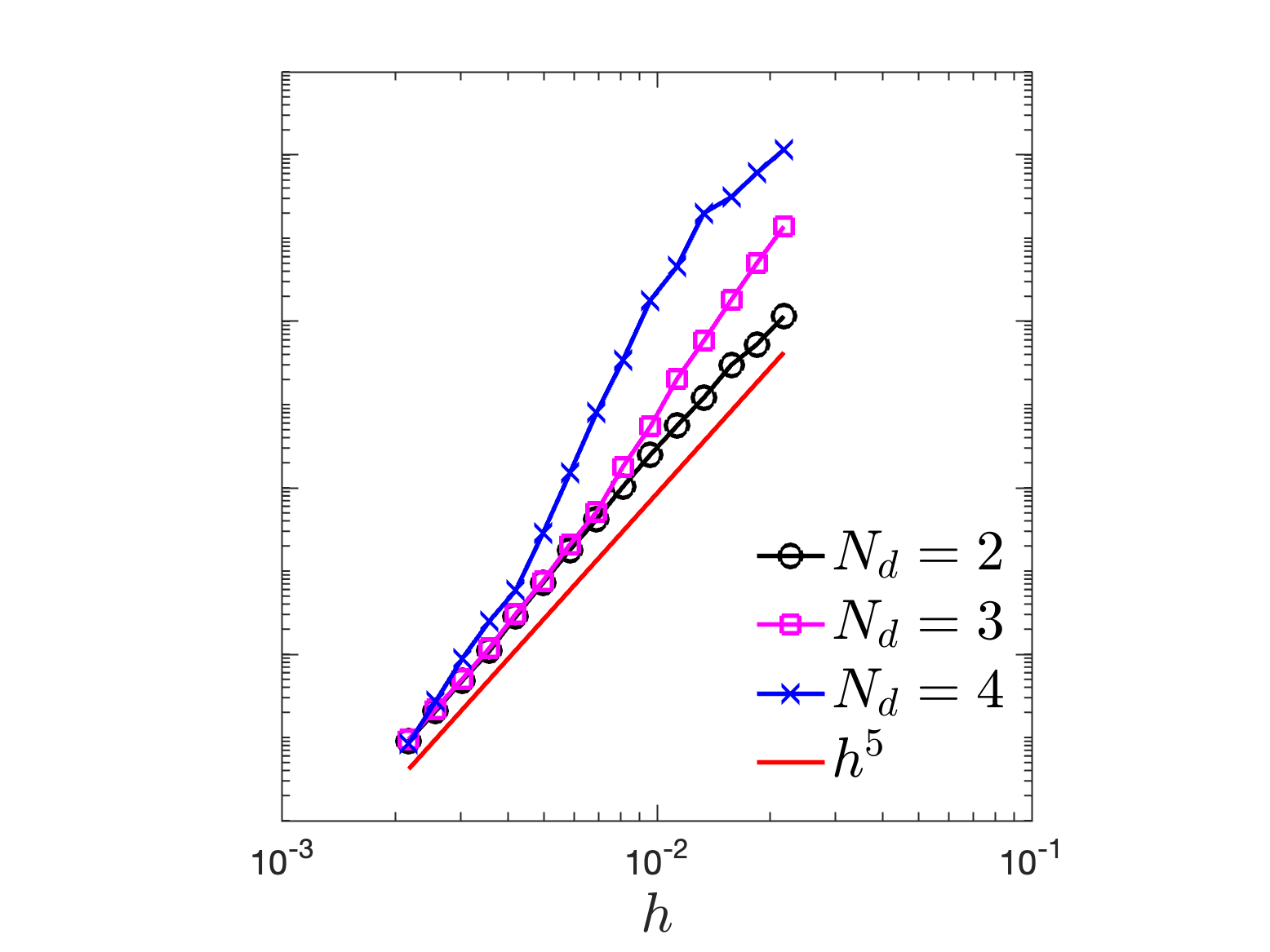}
	\end{adjustbox} 
       \caption{Convergence plots for an embedded boundaries problem with the geometry illustrated in Fig.~\ref{fig:geo_pblm_5} using the third and fifth order Hermite-Taylor correction function methods. 
       The left and right plots illustrate the results for respectively $m=1$ and $m=2$.
       Here $\mathbf{U} = [H_x, H_y, E_z]^T$.}
       \label{fig:conv_embedded_boundary}
\end{figure}
\begin{figure}   
	\centering
	\begin{adjustbox}{max width=1.0\textwidth,center}
		\includegraphics[width=2.5in,trim={0.0cm 0cm 1.75cm 0cm},clip]{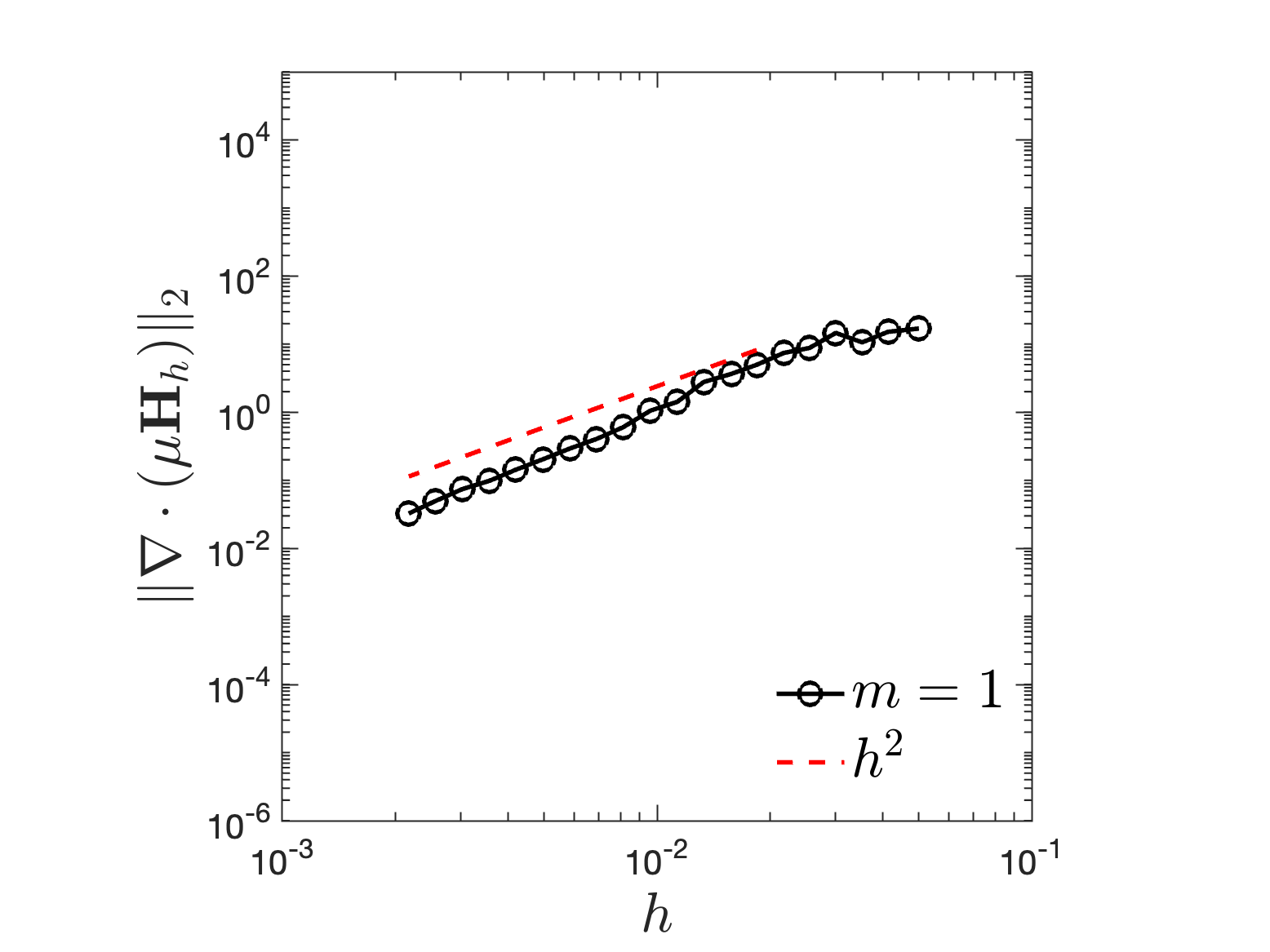} \hspace{-15pt}
		\includegraphics[width=2.5in,trim={0.0cm 0cm 1.75cm 0cm},clip]{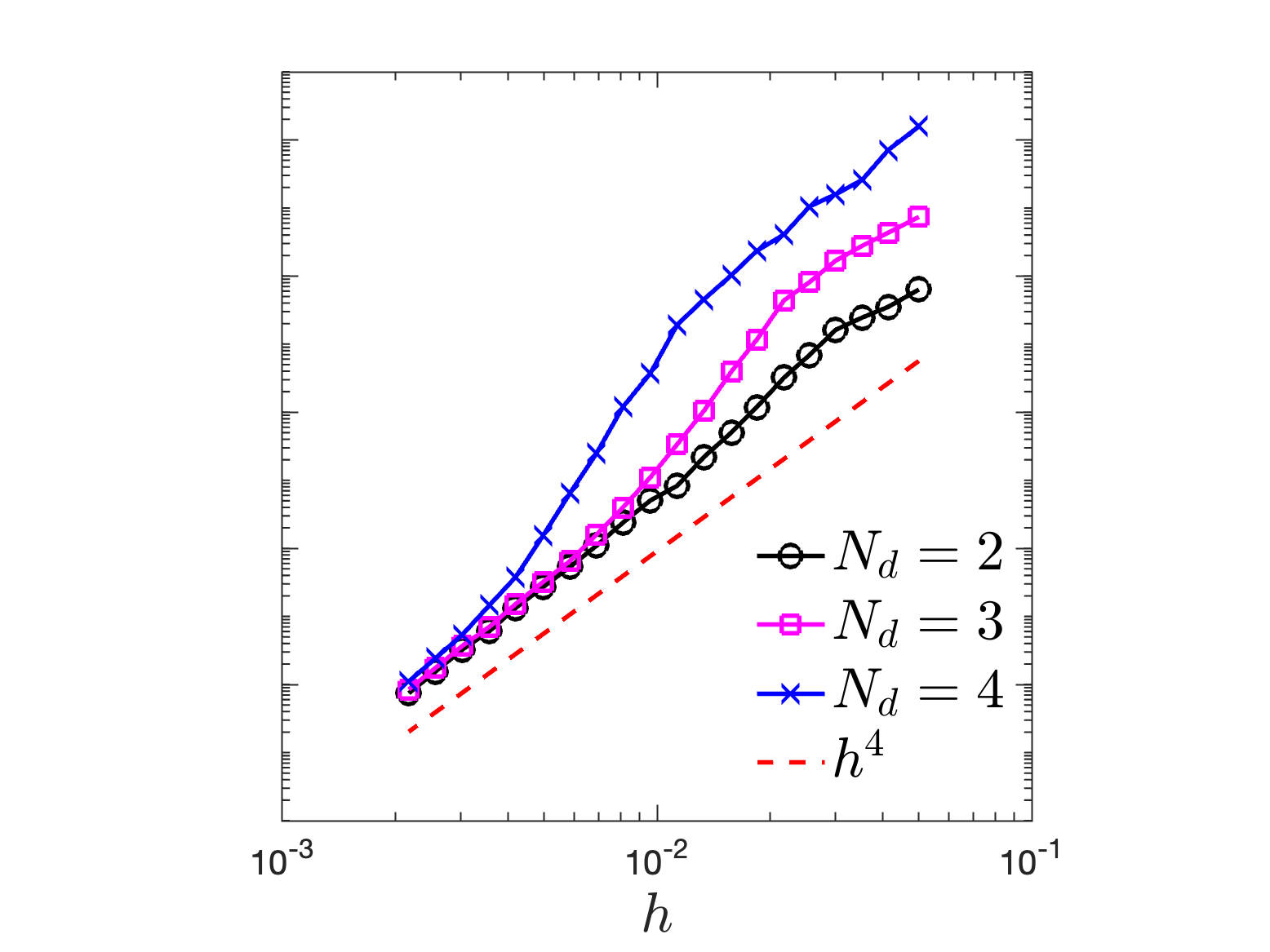}
	\end{adjustbox} 
       \caption{Convergence plots of the divergence of the magnetic field for an embedded boundaries problem with the geometry illustrated in Fig.~\ref{fig:geo_pblm_5} using the third and fifth order Hermite-Taylor correction function methods. 
       The left and right plots illustrate the results for respectively $m=1$ and $m=2$.}
       \label{fig:conv_divH_embedded_boundary}
\end{figure}
{\color{black} For $m=1$, 
    we set $N_d=2$ so that the method is stable and 
    we observe a clear third-order convergence for the electromagnetic fields and second-order convergence for the divergence of the magnetic field,
    as expected.}
As for $m=2$,
    we consider $N_d=2-4$, 
    leading to a stable method when the CFL constant is $0.7$.
{\color{black} When $N_d=2$, 
    we observe a clear fifth-order convergence for the electromagnetic fields and fourth-order for $\nabla\cdot(\mu\mathbold{H}_h)$.}
However, 
    as $N_d$ increases, 
    the relative error also increases and this makes the method inaccurate for coarser meshes. 
Note that the boundary condition \eqref{eq:boundary_condition} varies in space and time, 
and enforces a standing wave with a wavelength of $0.1$. 
In this situation, 
    for coarser meshes,
    it is more challenging for the minimization problem to provide accurate correction functions,
    particularly when we consider additional constraints on the spatial derivatives.
The approximations of electromagnetic fields at the final time are illustrated in Fig.~\ref{fig:pblm_5_caption}.
\begin{figure}   
	\centering
	\begin{adjustbox}{max width=1.0\textwidth,center}
		\includegraphics[width=2.5in,trim={2.5cm 0cm 1.75cm 0cm},clip]{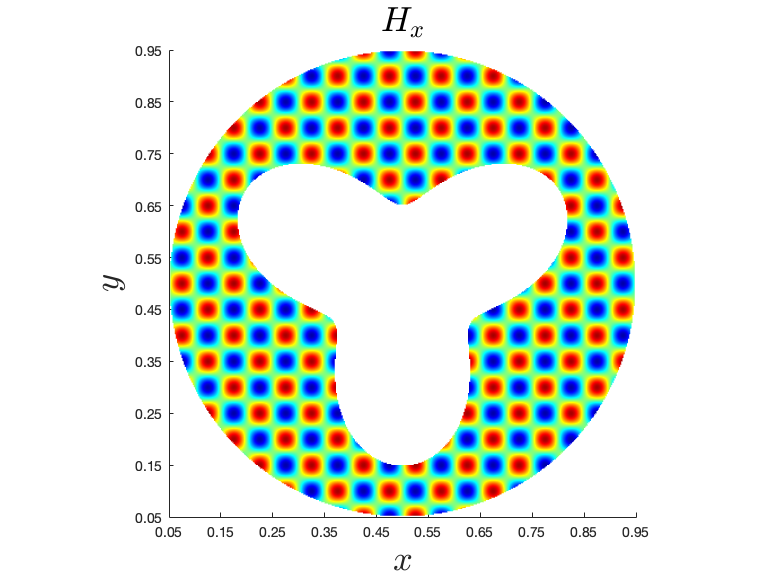}
		\includegraphics[width=2.5in,trim={2.5cm 0cm 1.75cm 0cm},clip]{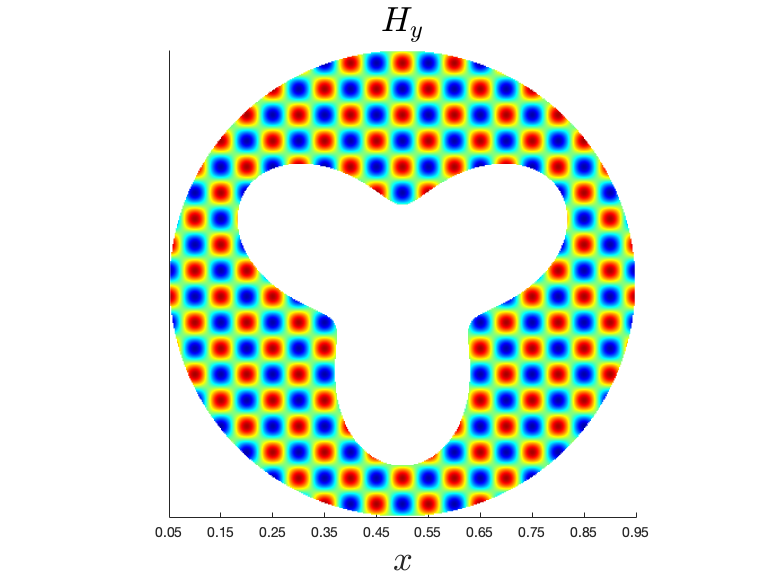}
	   	\includegraphics[width=2.5in,trim={2.5cm 0cm 1.75cm 0cm},clip]{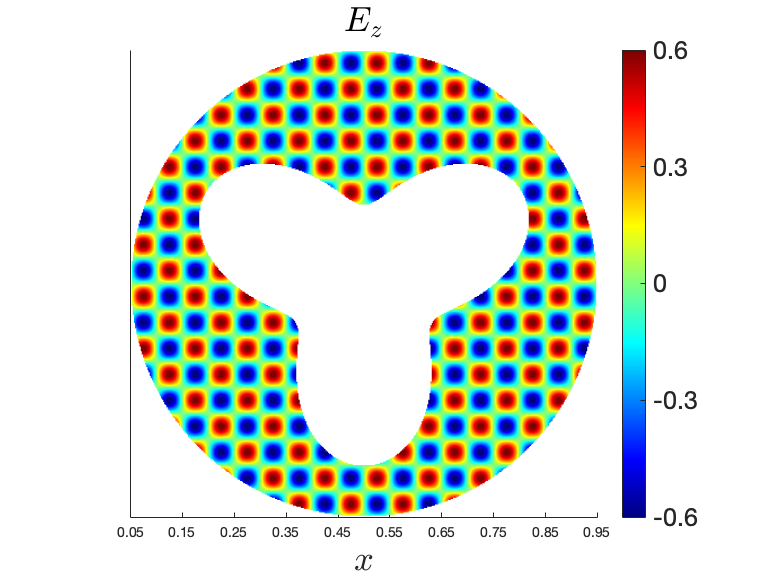}
	\end{adjustbox} 
       \caption{The components $H_x$, $H_y$ and $E_z$ at the final time $t_f=1$ for an embedded boundary problem with the geometry illustrated in Fig.~\ref{fig:geo_pblm_5} using the fifth-order Hermite-Taylor correction function method and $h=\frac{1}{460}$.}
       \label{fig:pblm_5_caption}
\end{figure}
For the remaining examples in this work, 
    we set $N_d$ to 2 and 4 for respectively $m=1$ and $m=2$.

Let us now consider a circular cavity problem.
The computational domain is $\Omega_c = [-1.1,1.1]\times[-1.1,1.1]$ and the 
	embedded boundary $\Gamma$ is a circle of unit radius and centered 
	at $(0,0)$ that encloses the physical domain $\Omega$. 
The time interval is $I=[0,1]$ and we enforce PEC boundary conditions on $\Gamma$. 
The physical parameters are $\epsilon = 1$ and $\mu = 1$, 
	and the solution in cylindrical coordinates in $\Omega$ is given by 
\begin{equation}
	\begin{aligned}
		H_\rho(\rho,\phi,t) =&\,\, \frac{i}{\alpha_{i,j}\,\rho}\,J_i(\alpha_{i,j}\,\rho)\,\sin(i\,\phi)\,\sin(\alpha_{i,j}\,t), \\
		H_\phi(\rho,\phi,t) =&\,\,\frac{1}{2}\,\big(J_{i-1}(\alpha_{i,j}\,\rho)-J_{i+1}(\alpha_{i,j}\,\rho)\big)\,\cos(i\,\phi)\,\sin(\alpha_{i,j}\,t), \\
		E_z(\rho,\phi,t) =&\,\, J_i(\alpha_{i,j}\,\rho)\,\cos(i\,\phi)\,\cos(\alpha_{i,j}\,t),
	\end{aligned}
\end{equation}
	where $\alpha_{i,j}$ is the $j$-th positive real root of the $i$-order Bessel function of first kind $J_i$,
	$i=2$ and $j=11$. 
{\color{black} The left plot of Fig.~\ref{fig:conv_embedded_boundary_cavity_problem}  shows that we obtain the expected $2\,m+1$ rates of convergence for the electromagnetic fields in the $L^2$-norm.
The $2\,m$ rates of convergence for the divergence of the magnetic field are illustrated in the right plot of Fig.~\ref{fig:conv_embedded_boundary_cavity_problem}.}
 Fig.~\ref{fig:pblm_4_caption} illustrates the approximations of electromagnetic fields at the final time.
\begin{figure}   
	\centering
	\begin{adjustbox}{max width=1.0\textwidth,center}
	\includegraphics[width=2.5in,trim={0.0cm 0cm 1.75cm 0cm},clip]{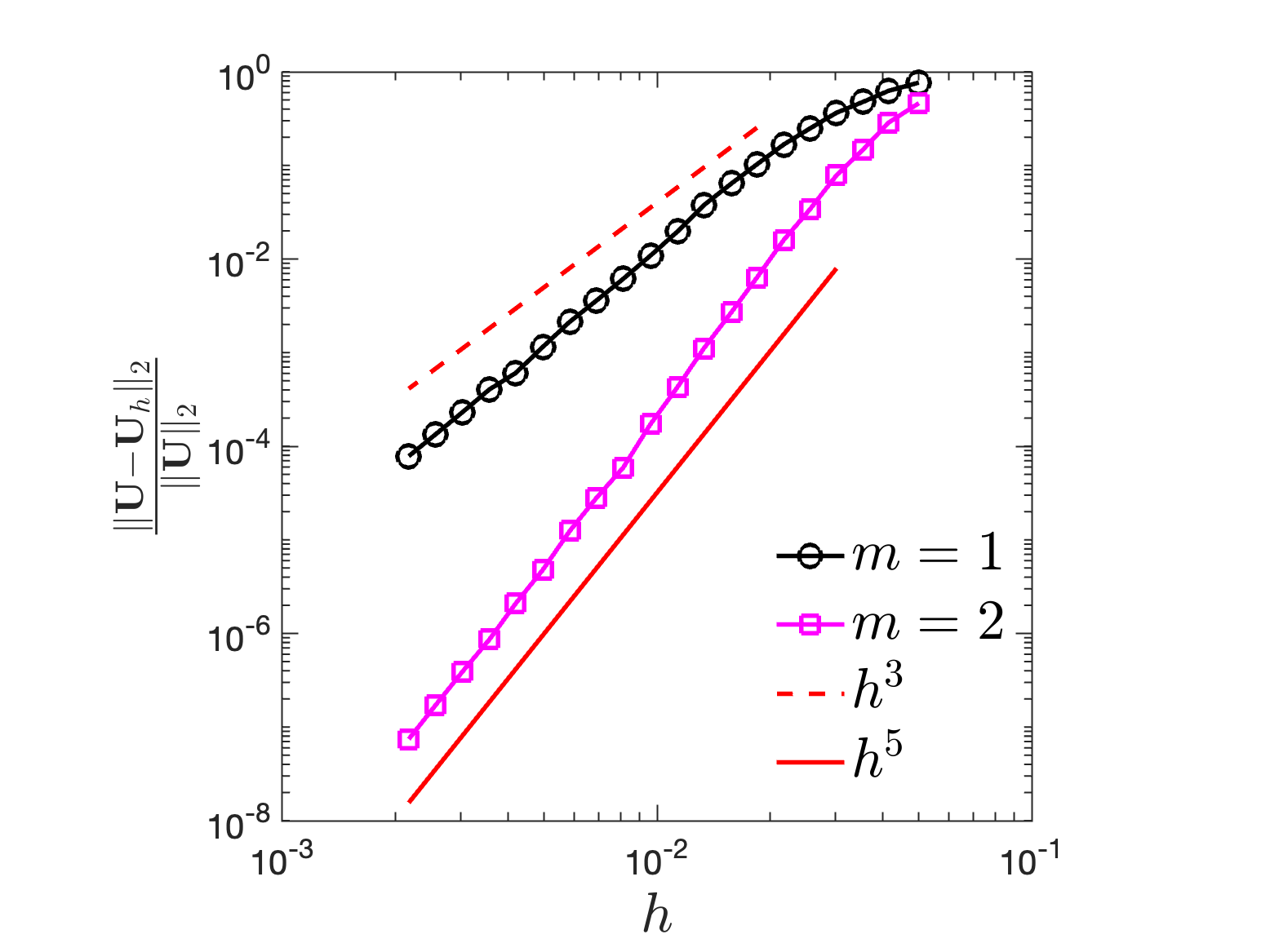}\hspace{-15pt}
	\includegraphics[width=2.5in,trim={0.0cm 0cm 1.75cm 0cm},clip]{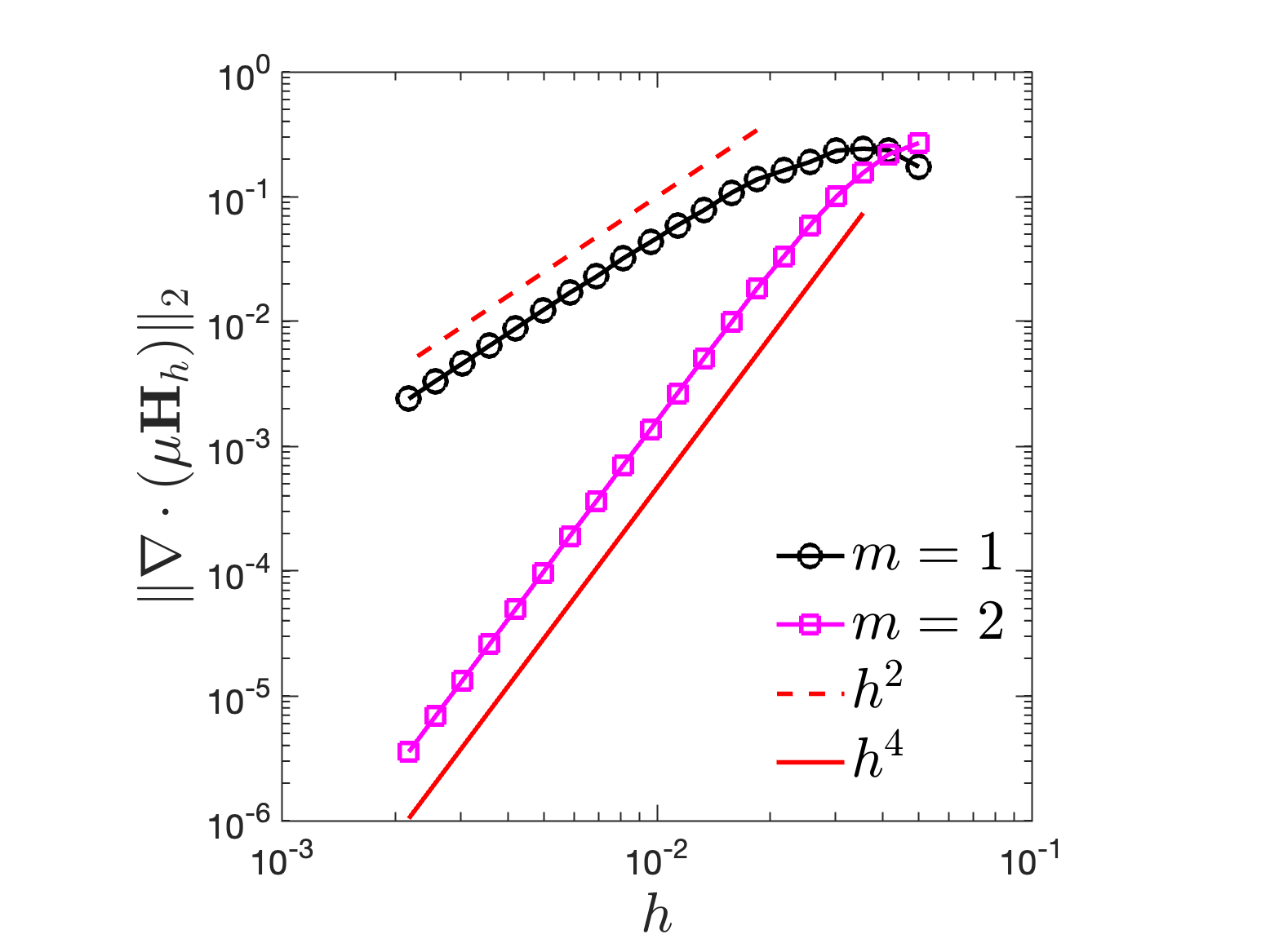}
	\end{adjustbox} 
       \caption{Convergence plots for a circular cavity problem using the third and fifth order Hermite-Taylor correction function methods. 
       The left and right plots show the convergence of the electromagnetic fields and the divergence of the magnetic fields. 
       Here $\mathbf{U} = [H_x, H_y, E_z]^T$.}
    \label{fig:conv_embedded_boundary_cavity_problem}
\end{figure}
\begin{figure}   
	\centering
	\begin{adjustbox}{max width=1.0\textwidth,center}
		\includegraphics[width=2.5in,trim={2.5cm 0cm 1.75cm 0cm},clip]{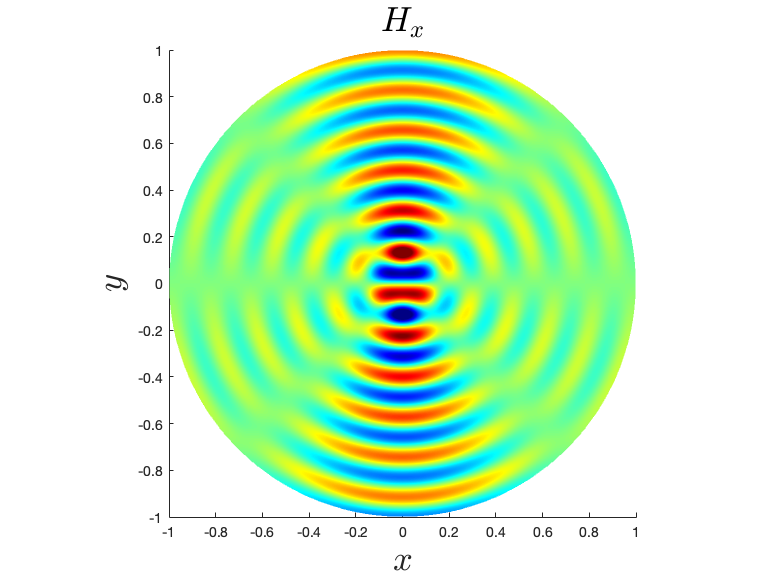}
		\includegraphics[width=2.5in,trim={2.5cm 0cm 1.75cm 0cm},clip]{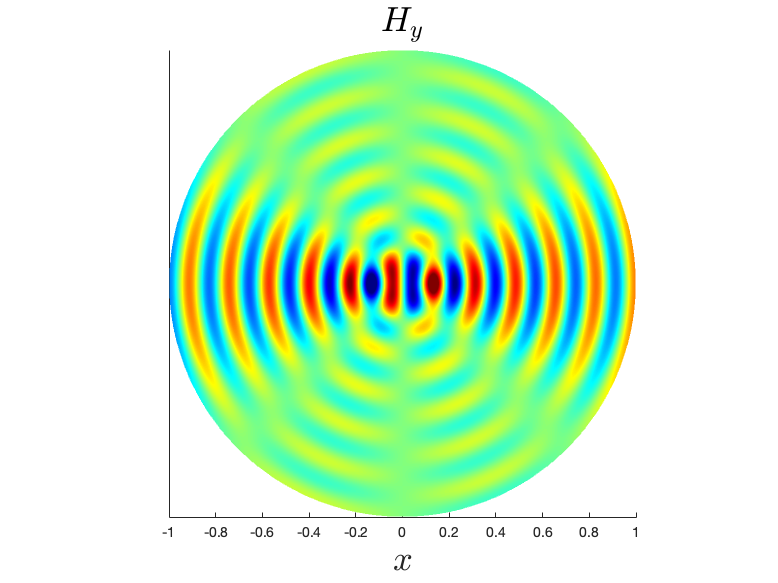}
	   	\includegraphics[width=2.5in,trim={2.5cm 0cm 1.75cm 0cm},clip]{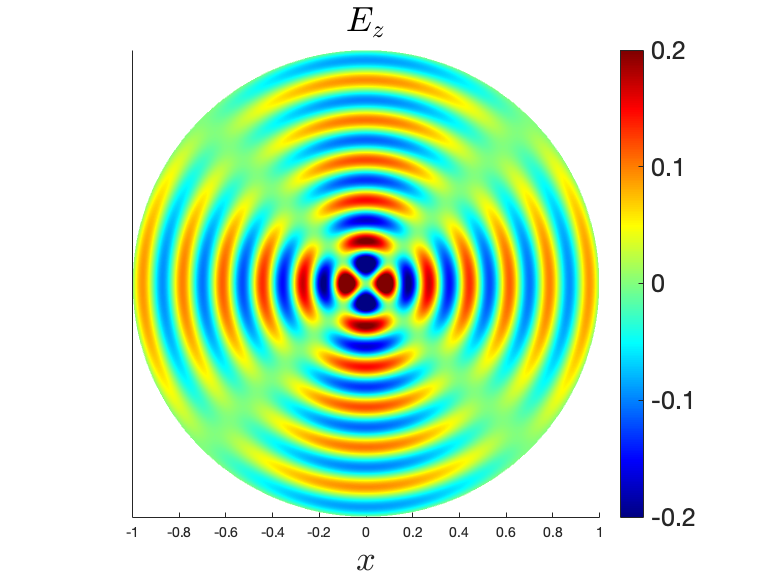}
	\end{adjustbox} 
       \caption{The components $H_x$, $H_y$ and $E_z$ at the final time $t_f=1$ for a circular cavity problem using the fifth-order Hermite-Taylor correction function method and $h=\frac{1}{460}$.}
       \label{fig:pblm_4_caption}
\end{figure}

\subsubsection*{Interface Problems with Analytic Solutions}

Let us now consider Maxwell's interface problems. 
First, 
	we consider a dielectric cylinder in free-space exposed to an excitation wave. 
The geometry of the computational domain consists of two concentric circles enclosed in $\Omega_c = [-1,1]\times[-1,1]$.
The first circle centered at $(0,0)$ has a radius of 0.8 and is an embedded boundary $\Gamma_1$ of $\Omega^+$.
The second circle also centered at $(0,0)$, 
	but with a radius $r_0=0.6$, which
	represents the interface $\Gamma_2$ between the subdomains 
	and enclosed the subdomain $\Omega^-$.
The time interval is set to $I=[0,1]$.
The initial and boundary conditions are chosen such that the solution in cylindrical coordinates is given by the real part of 
\begin{equation}
\begin{aligned}
H_{\theta}(r,\theta,t) =&\,\, \left\{ 
  \begin{array}{l l}
    -\frac{\mathfrak{i}\,k^-}{\omega\,\mu^-}\displaystyle\sum_{n=-\infty}^{\infty} C_n^{\text{tot}}\,J_n^\prime(k^-\,r)\,e^{\mathfrak{i}\,(n\,\theta+\omega\,t)}, & \text{if} \quad r\leq r_0, \\ 
    -\frac{\mathfrak{i}\,k^+}{\omega\,\mu^+}\displaystyle\sum_{n=-\infty}^{\infty} (\mathfrak{i}^{-n}\,J_n^\prime(k^+\,r) + C_n^{\text{scat}}\,H_n^{{(2)}^\prime}(k^+\,r))\,e^{\mathfrak{i}\,(n\,\theta+\omega\,t)}, & \text{if} \quad r>r_0,
  \end{array} \right.\\
H_r(r,\theta,t) =&\,\, \left\{ 
  \begin{array}{l l}
    -\frac{1}{\omega\,\mu^-\,r}\displaystyle\sum_{n=-\infty}^{\infty} n\,C_n^{\text{tot}}\,J_n(k^-\,r)\,e^{\mathfrak{i}\,(n\,\theta+\omega\,t)}, & \text{if} \quad r\leq r_0, \\ 
    -\frac{1}{\omega\,\mu^+\,r}\displaystyle\sum_{n=-\infty}^{\infty} n\,(\mathfrak{i}^{-n}\,J_n(k^+\,r) + C_n^{\text{scat}}\,H_n^{(2)}(k^+\,r))\,e^{\mathfrak{i}\,(n\,\theta+\omega\,t)}, & \text{if} \quad r>r_0,
  \end{array} \right. \\
E_z(r,\theta,t) =&\,\, \left\{ 
  \begin{array}{l l}
    \displaystyle\sum_{n=-\infty}^{\infty} C_n^{\text{tot}}\,J_n(k^-\,r)\,e^{\mathfrak{i}\,(n\,\theta+\omega\,t)}, & \text{if} \quad r\leq r_0, \\ 
    \displaystyle\sum_{n=-\infty}^{\infty} (\mathfrak{i}^{-n}\,J_n(k^+\,r) + C_n^{\text{scat}}\,H_n^{(2)}(k^+\,r))\,e^{\mathfrak{i}\,(n\,\theta+\omega\,t)}, &  \text{if} \quad r>r_0,
  \end{array} \right.
  \end{aligned}
\end{equation}
\normalsize
with 
\begin{equation}
\begin{aligned}
C_n^{\text{tot}} =&\,\, \mathfrak{i}^{-n}\,\frac{\tfrac{k^+}{\mu^+}\,(J_n^\prime(k^+\,r_0)\,H_n^{(2)}(k^+\,r_0)-H_n^{{(2)}^\prime}(k^+\,r_0)\,J_n(k^+\,r_0))}{\frac{k^-}{\mu^-}\,J_n^\prime(k^-\,r_0)\,H_n^{(2)}(k^+\,r_0)-\tfrac{k^+}{\mu^+}\,H_n^{{(2)}^\prime}(k^+\,r_0)\,J_n(k^-\,r_0)},\\
C_n^{\text{scat}}=&\,\, \mathfrak{i}^{-n}\,\frac{\tfrac{k^+}{\mu^+}\,J_n^\prime(k^+\,r_0)\,J_n(k^-\,r_0)-\tfrac{k^-}{\mu^-}\,J_n^\prime(k^-\,r_0)\,J_n(k^+\,r_0)}{\frac{k^-}{\mu^-}\,J_n^\prime(k^-\,r_0)\,H_n^{(2)}(k^+\,r_0)-\tfrac{k^+}{\mu^+}\,H_n^{{(2)}^\prime}(k^+\,r_0)\,J_n(k^-\,r_0)}.
\end{aligned}
\end{equation}
Here $\omega = 2\,\pi$,
	$k^\circ=\omega\,\sqrt{\mu^\circ\,\epsilon^\circ}$, 
	$J_n$ is the $n$-order Bessel function of first kind and $H_n^{(2)}$ is the $n$-order 
	Hankel function of second kind and the imaginary number is $\mathfrak{i}$ \cite{Taflove1993,Cai2003}. 
We set $\mu^+= 1$, 
	$\epsilon^+ = 1$,
	$\mu^- = 2$ and $\epsilon^-=2.25$.
Hence, 
	$\Omega^-$ is a magnetic dielectric material. 
In this situation,
	$H_x$ and $H_y$ are discontinuous at the interface and $E_z$ is continuous.
	
{\color{black}As shown in Fig.~\ref{fig:magnetic_dielectric_conv}, 
	we observe the expected $2\,m+1$ and $2\,m$ rates of convergence for respectively the electromagnetic fields and the divergence of the magnetic field, 
	even in the presence of discontinuities.}
\begin{figure}   
	\centering
	\begin{adjustbox}{max width=1.1\textwidth,center}
		\includegraphics[width=2.5in,trim={0.0cm 0cm 1.75cm 0cm},clip]{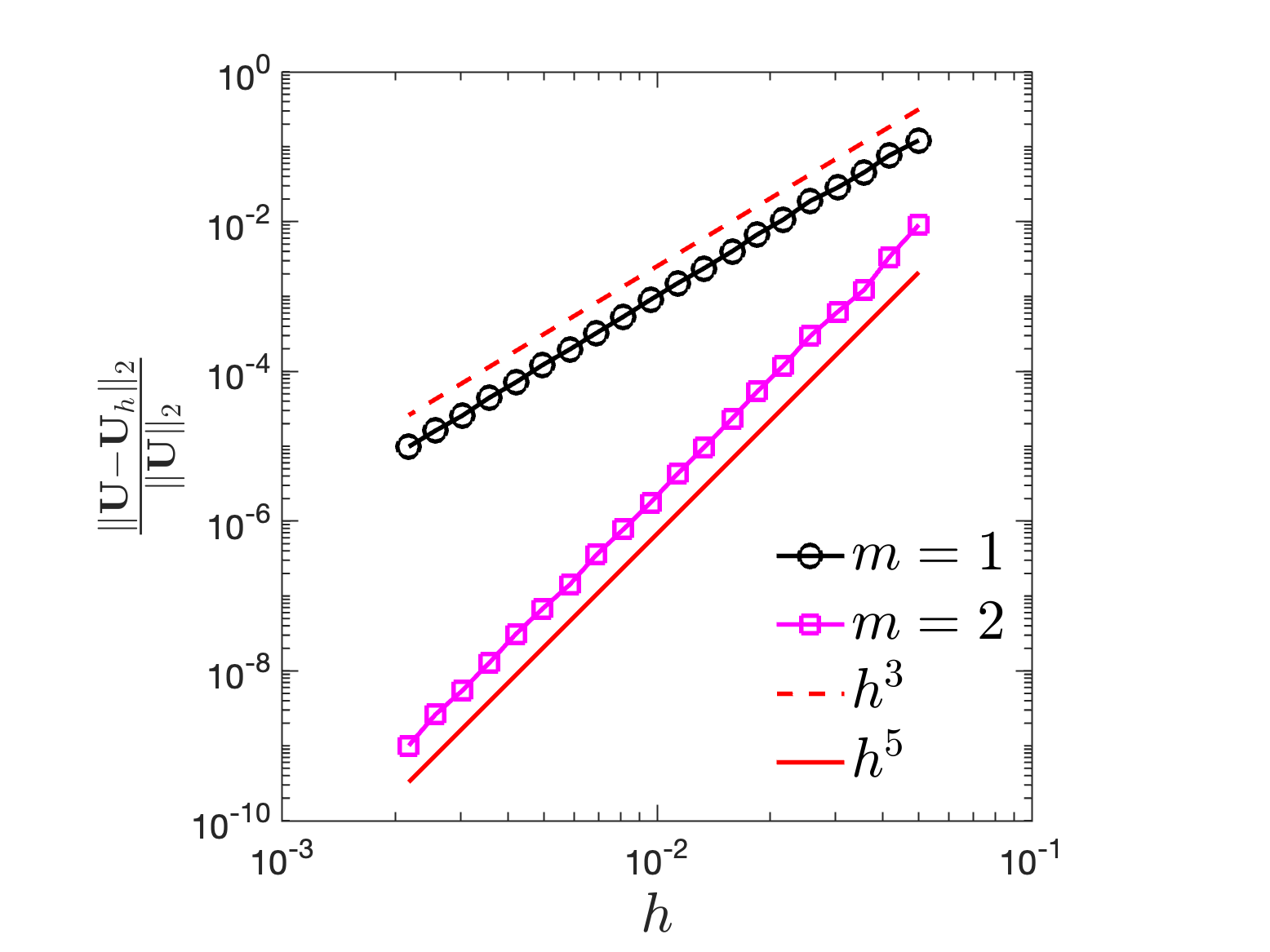}\hspace{-15pt}
		\includegraphics[width=2.5in,trim={0.0cm 0cm 1.75cm 0cm},clip]{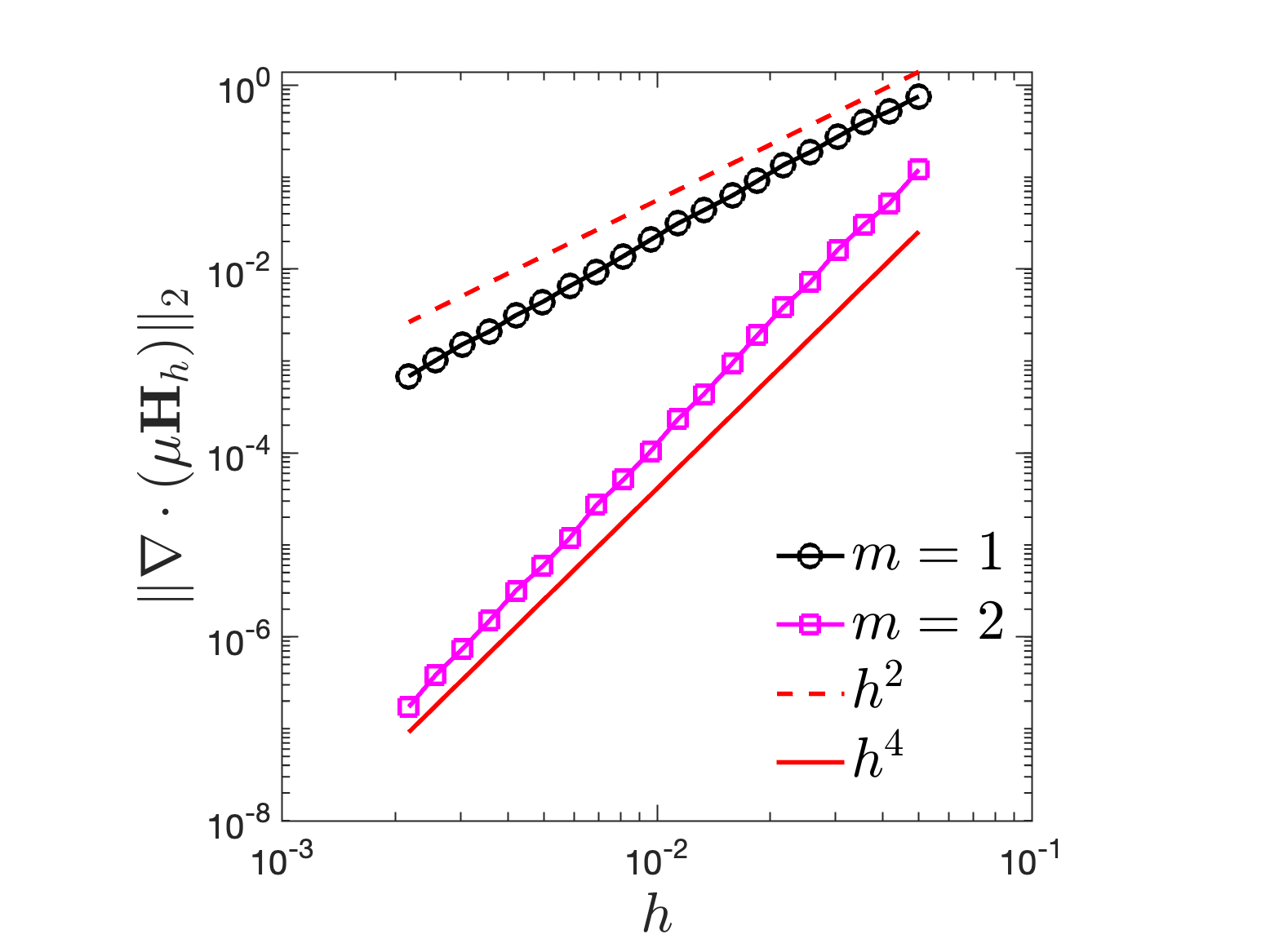}
	\end{adjustbox} 
       \caption{Convergence plots for the scattering of a magnetic dielectric cylinder problem using the third and fifth order Hermite-Taylor correction function methods. 
       The left and right plots show the convergence of the electromagnetic fields and the divergence of the magnetic fields.  Here $\mathbf{U} = [H_x, H_y, E_z]^T$.}
       \label{fig:magnetic_dielectric_conv}
\end{figure}
The approximations of electromagnetic fields at the final time are illustrated in Fig.~\ref{fig:magnetic_dielectric_caption}.
\begin{figure}   
	\centering
	\begin{adjustbox}{max width=1.0\textwidth,center}
		\includegraphics[width=2.5in,trim={2.5cm 0cm 1.75cm 0cm},clip]{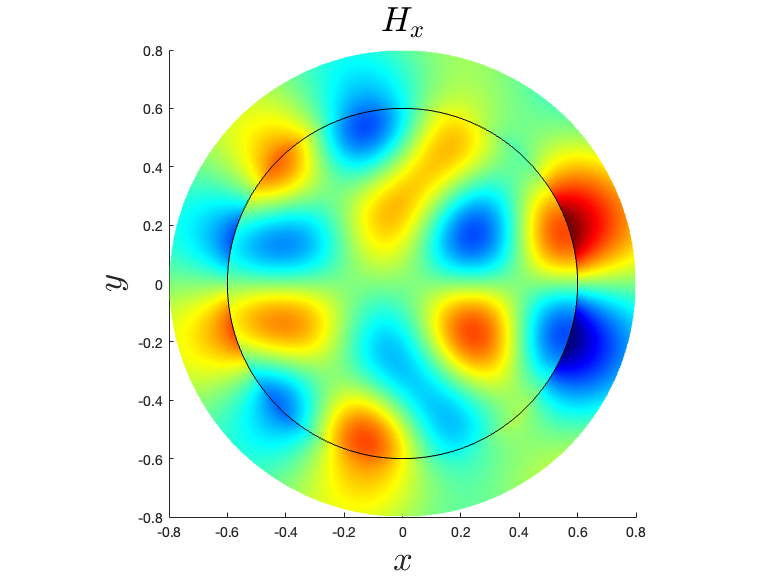}
		\includegraphics[width=2.5in,trim={2.5cm 0cm 1.75cm 0cm},clip]{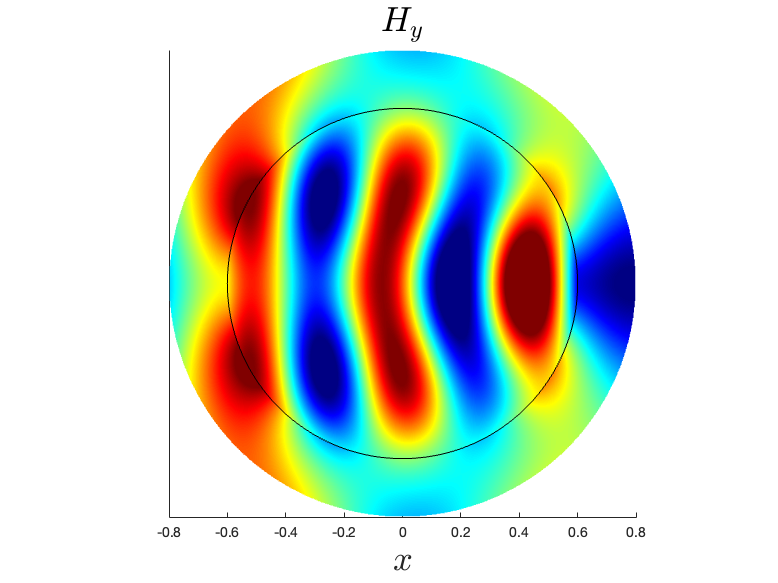}
	   	\includegraphics[width=2.5in,trim={2.5cm 0cm 1.75cm 0cm},clip]{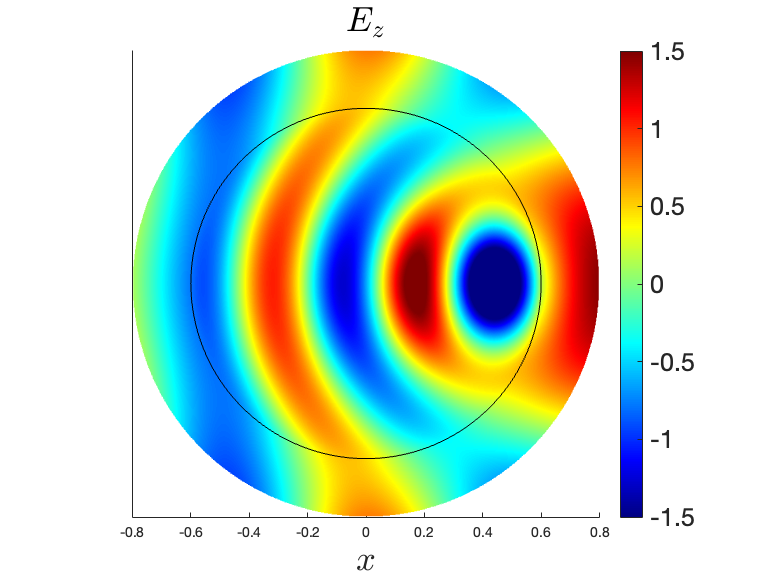}
	\end{adjustbox} 
       \caption{The components $H_x$, $H_y$ and $E_z$ at the final time $t_f=1$ for the scattering of a magnetic dielectric problem using the fifth-order Hermite-Taylor correction function method and $h=\frac{1}{460}$. The interface is represented by the black line.}
       \label{fig:magnetic_dielectric_caption}
\end{figure}
Fig.~\ref{fig:cylinder_magnetic_dielectric_slice_caption} illustrates the magnetic field components at $y=0.2$ along $x$. 
\begin{figure}   
	\centering
	\begin{adjustbox}{max width=1.0\textwidth,center}
		\includegraphics[width=2.5in,trim={0cm 0cm 1.25cm 0cm},clip]{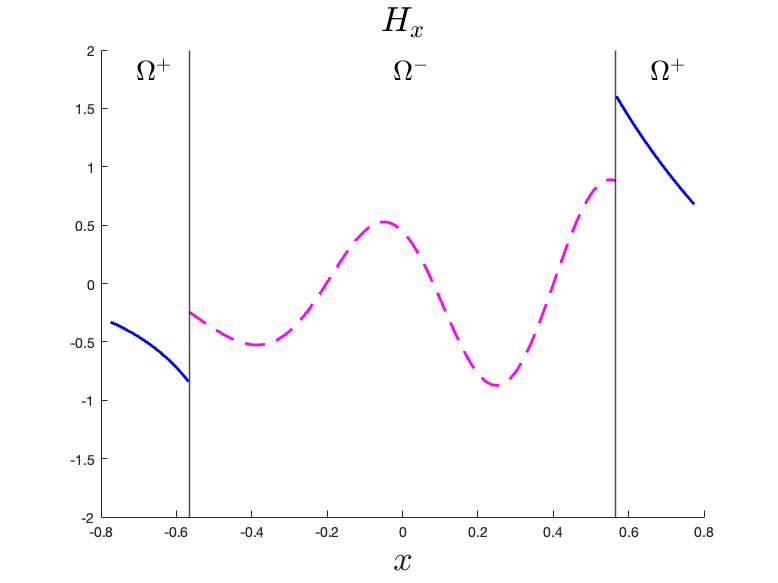}
		\includegraphics[width=2.5in,trim={0cm 0cm 1.25cm 0cm},clip]{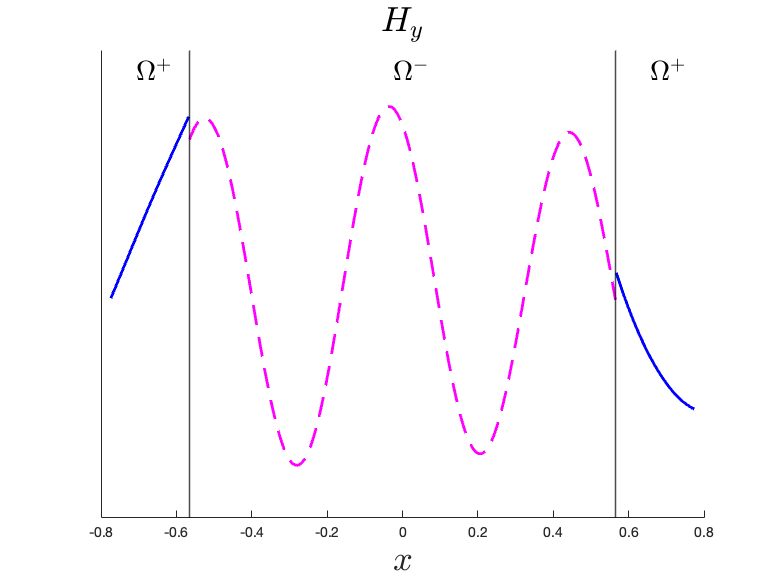}
	\end{adjustbox} 
       \caption{The components $H_x$ and $H_y$ at $y=0.2$ and the final time $t_f=1$ for the scattering of a magnetic dielectric problem using the fifth-order Hermite-Taylor correction function method and $h=\frac{1}{460}$. The interface is represented by the vertical black line. The numerical solution in $\Omega^+$ and $\Omega^-$ are respectively represented by the blue line and the dashed magenta line.}
       \label{fig:cylinder_magnetic_dielectric_slice_caption}
\end{figure}
The discontinuities at the interface are well captured by the Hermite-Taylor correction function method.

\subsubsection*{Interface Problems with Reference Solutions}

We consider the computational domain illustrated in Fig.~\ref{fig:geo_pblm_5} with $\Omega_c = [0,1]\times[0,1]$ 
	and a time interval $I=[0,1]$.  
We set $\mu^+= 1$, 
	$\epsilon^+ = 1$,
	$\mu^- = 2$ and $\epsilon^-=2.25$ 
	so $H_x$ and $H_y$ could be discontinuous at the interface.
The boundary condition on $\Gamma_1$ is given by 
\begin{equation}
	E_z(t) = e^{-\frac{(t-0.3)^2}{2\,\sigma^2}},
\end{equation}
	while interface conditions \eqref{eq:interface_cdns} are enforced on $\Gamma_2$.
Here $\sigma = 0.02$.
The initial conditions are given by trivial electromagnetic fields.
To our knowledge, 
	there is no analytical solution to this problem.
{\color{black} Hence,
	we compute a reference solution $\mathbold{U}^*$ using a Richardson extrapolation procedure pointwise with $h=\tfrac{1}{800}$ and $h=\tfrac{1}{1600}$, 
	and the fifth-order Hermite-Taylor correction function method.
This leads to reference solution with at least a sixth-order accuracy.}
The reference solution is illustrated in Fig.~\ref{fig:magnetic_dielectric_complex_geo_caption}.
\begin{figure}   
	\centering
	\begin{adjustbox}{max width=1.0\textwidth,center}
		\includegraphics[width=2.5in,trim={2.5cm 0cm 1.75cm 0cm},clip]{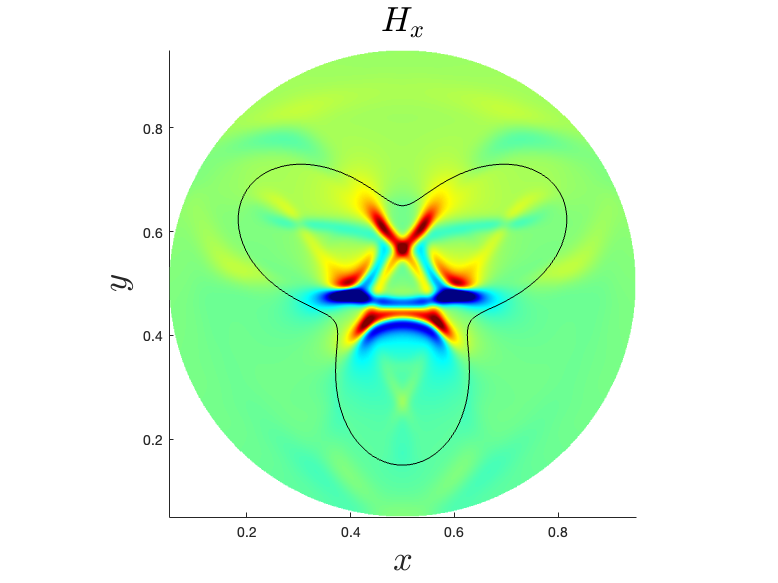}
		\includegraphics[width=2.5in,trim={2.5cm 0cm 1.75cm 0cm},clip]{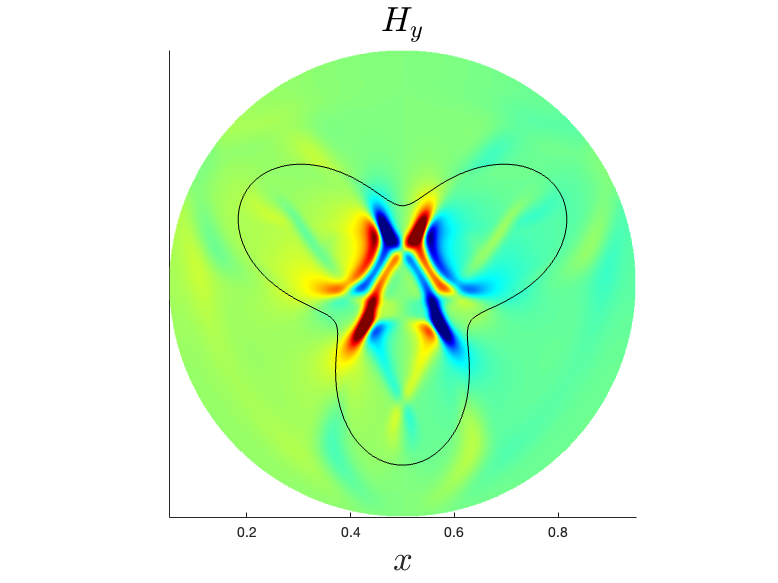}
	   	\includegraphics[width=2.5in,trim={2.5cm 0cm 1.75cm 0cm},clip]{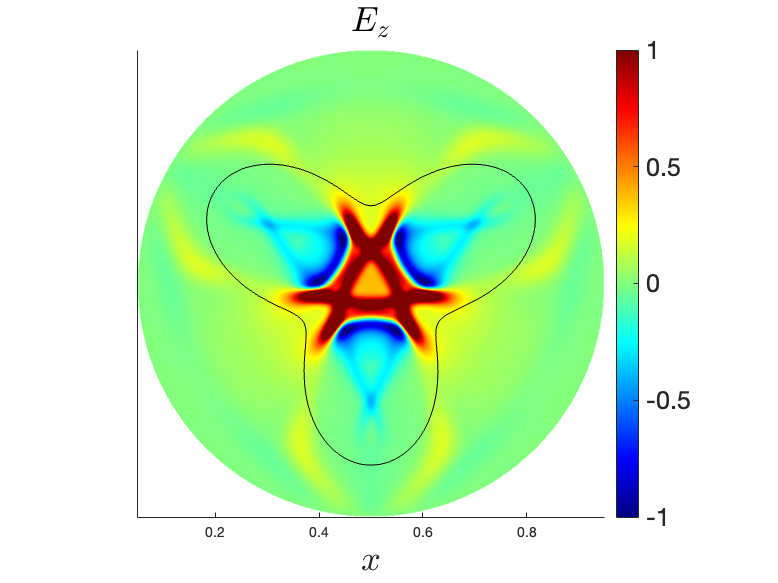}
	\end{adjustbox} 
       \caption{The components $H_x$, $H_y$ and $E_z$ at the final time $t_f=1$ for an interface problem with the geometry illustrated in Fig.~\ref{fig:geo_pblm_5} using the fifth-order Hermite-Taylor correction function method and $h=\frac{1}{1600}$. We consider a Gaussian pulse in time as the boundary condition for $E_z$. The interface is represented by the black line.}
       \label{fig:magnetic_dielectric_complex_geo_caption}
\end{figure}
Afterward,
	we estimate the relative errors by comparing the reference solution to approximations coming from meshes 
	with $h\in\{\tfrac{1}{50},\tfrac{1}{100},\tfrac{1}{200},\tfrac{1}{400},\tfrac{1}{800}\}$.
Note that all primal nodes for these meshes are also part of the reference solution mesh. 
The relative error in the $L^2$-norm is computed at the final time. 
The left plot of Fig.~\ref{fig:magnetic_dielectric_self_conv} illustrates that we obtain the expected $2\,m+1$ rates of convergence.
{\color{black} 
The $2\,m$ rates of convergence of the divergence of the magnetic field are illustrated in the left plot of Fig.~\ref{fig:magnetic_dielectric_self_conv_divH}.}
\begin{figure}   
	\centering
	\begin{adjustbox}{max width=1.0\textwidth,center}
		\includegraphics[width=2.5in,trim={0.0cm 0cm 1.75cm 0cm},clip]{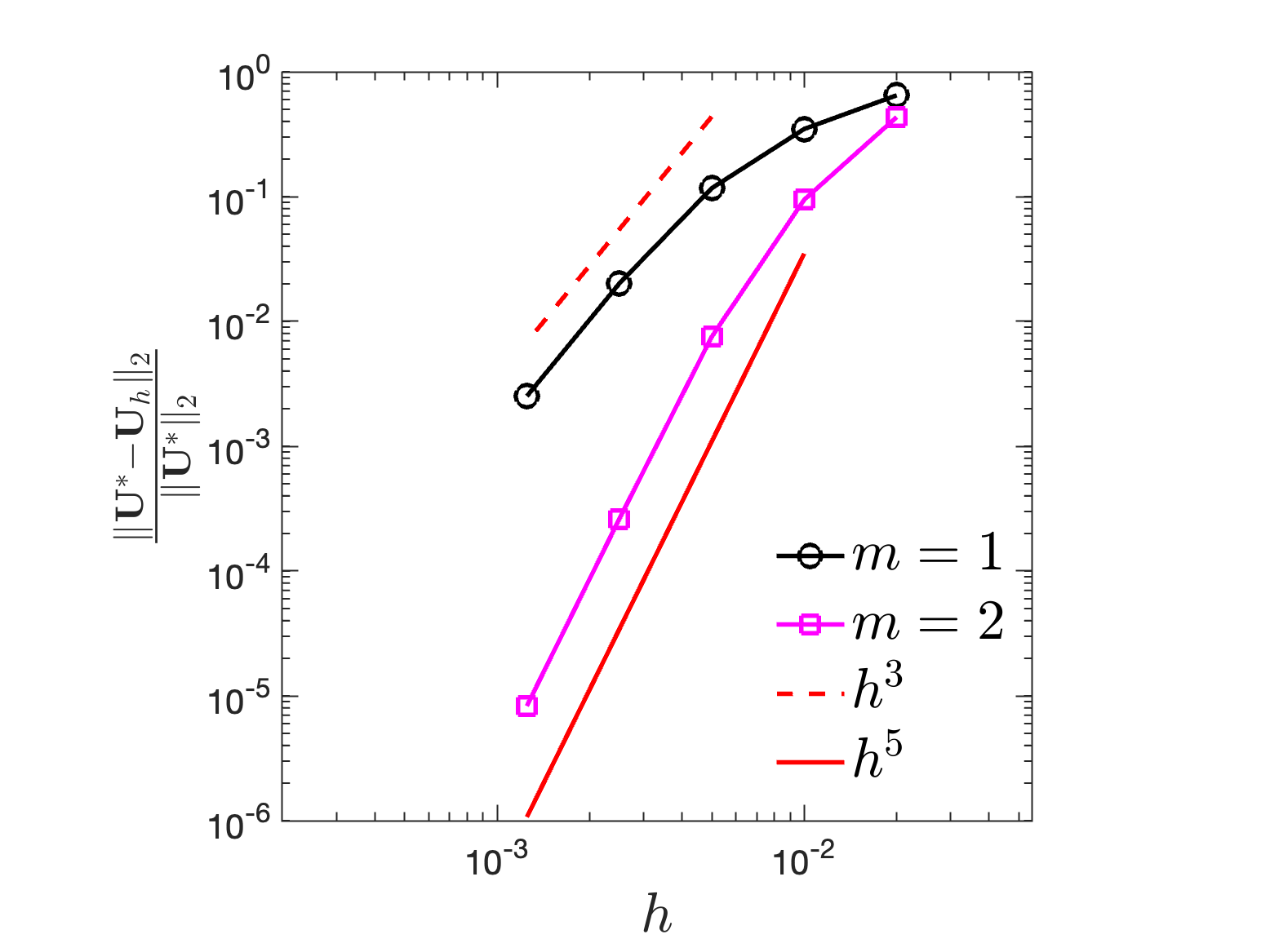}
		\hspace{-15pt}
		\includegraphics[width=2.5in,trim={0.0cm 0cm 1.75cm 0cm},clip]{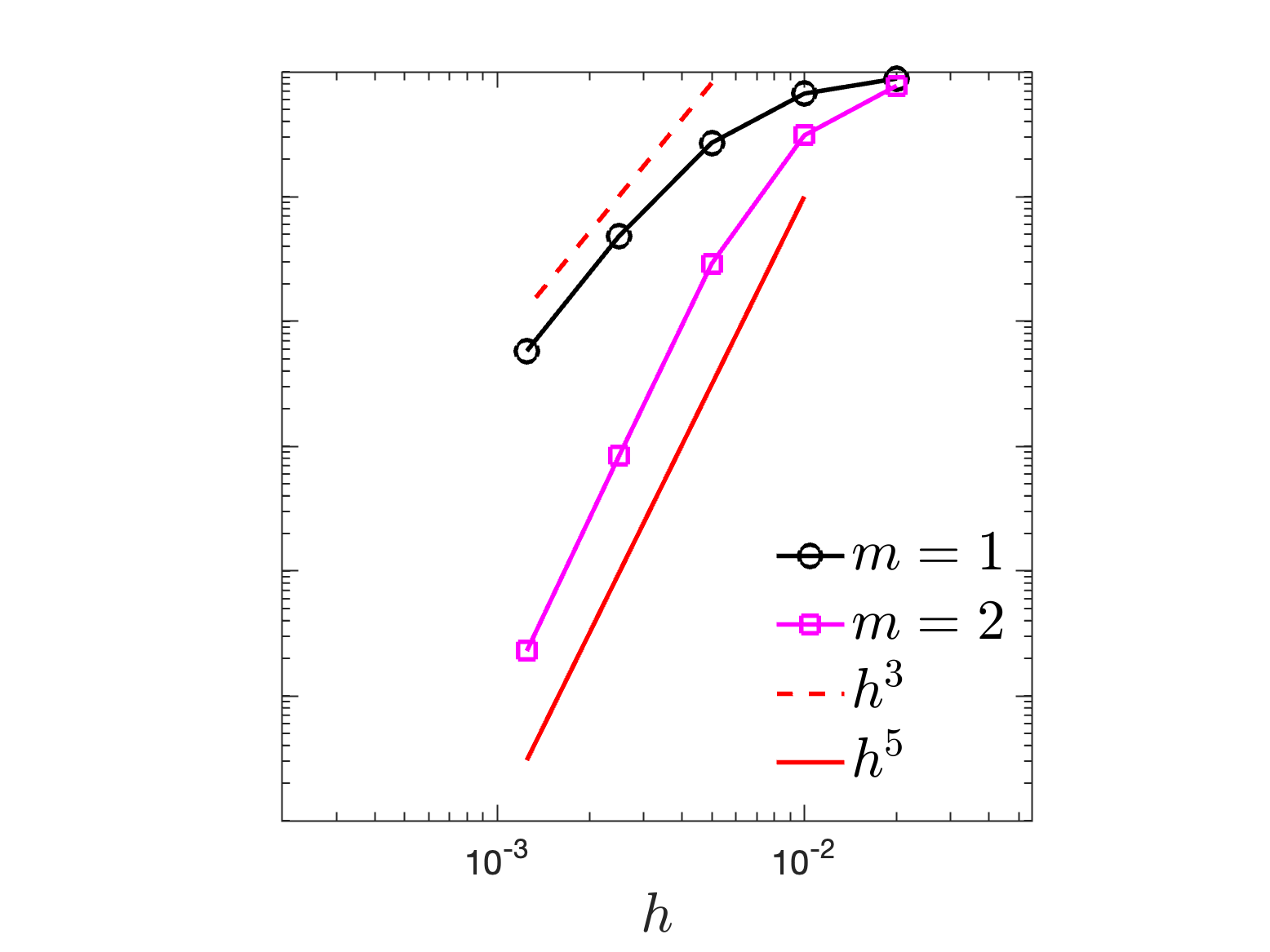}
	\end{adjustbox} 
       \caption{Self convergence plots for interface problems with the geometry illustrated in Fig.~\ref{fig:geo_pblm_5} using the third and fifth order Hermite-Taylor correction function methods. For the left plot, we consider a Gaussian pulse in time as the boundary condition for $E_z$. For the right plot, we consider a Gaussian pulse as an initial condition for $E_z$. Here $\mathbf{U} = [H_x, H_y, E_z]^T$.}
       \label{fig:magnetic_dielectric_self_conv}
\end{figure}
\begin{figure}   
	\centering
	\begin{adjustbox}{max width=1.0\textwidth,center}
		\includegraphics[width=2.5in,trim={0.0cm 0cm 1.75cm 0cm},clip]{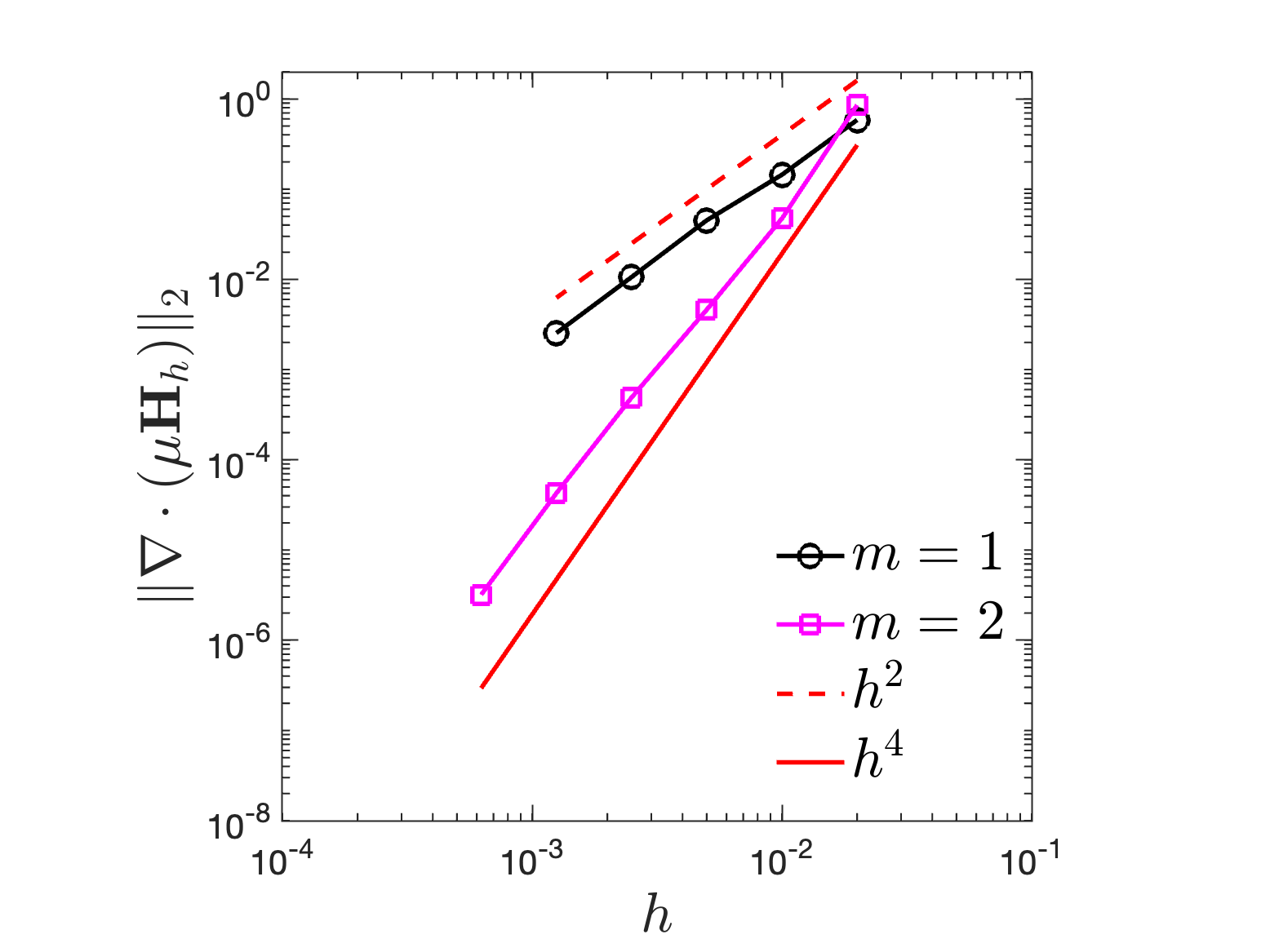}
		\hspace{-15pt}
		\includegraphics[width=2.5in,trim={0.0cm 0cm 1.75cm 0cm},clip]{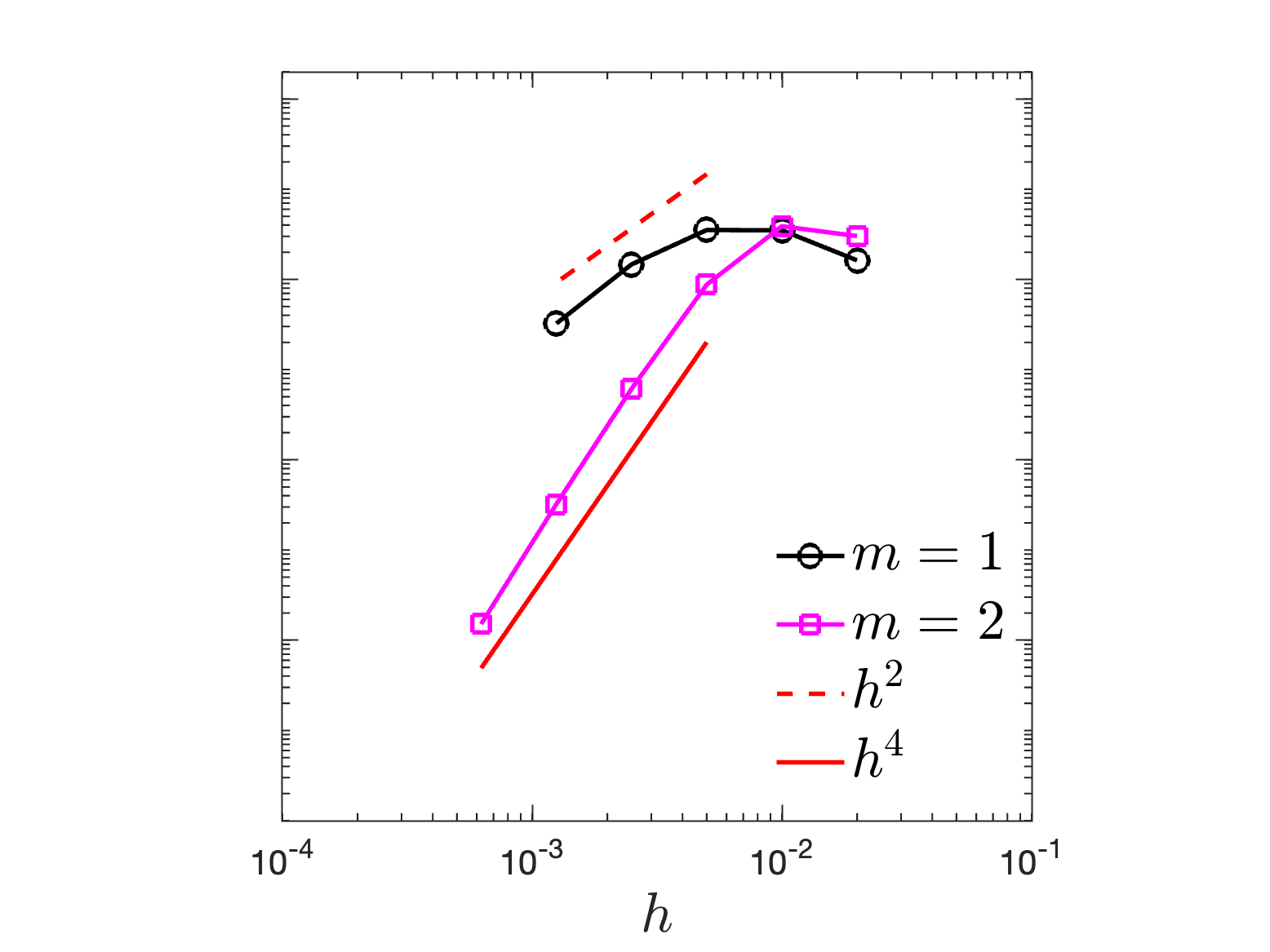}
	\end{adjustbox} 
       \caption{Self convergence plots of the divergence of the magnetic field for interface problems with the geometry illustrated in Fig.~\ref{fig:geo_pblm_5} using the third and fifth order Hermite-Taylor correction function methods. For the left plot, we consider a Gaussian pulse in time as the boundary condition for $E_z$. For the right plot, we consider a Gaussian pulse as an initial condition for $E_z$.}
       \label{fig:magnetic_dielectric_self_conv_divH}
\end{figure}

We now consider the same geometry where we enforce PEC boundary conditions on $\Gamma_1$. 
The initial conditions are $H_x = H_y = 0$ and 
\begin{equation} \label{eq:gaussian_pulse}
	E_z(x,y) = e^{\frac{-(x-0.5)^2-(y-0.5)^2}{2\,\sigma^2}}.
\end{equation}
Here $\sigma = 0.01$.
In this situation, 
	we are not aware of an analytical solution.
{\color{black} We then perform self convergence studies using the reference solution, 
	illustrated in Fig.~\ref{fig:magnetic_dielectric_complex_geo_gaussian_pulse_caption}, 
	that was computed using a Richardson procedure with $h=\tfrac{1}{800}$ and $h=\tfrac{1}{1600}$, 
    and the fifth-order Hermite-Taylor correction function method.}
\begin{figure}   
	\centering
	\begin{adjustbox}{max width=1.0\textwidth,center}
		\includegraphics[width=2.5in,trim={2.5cm 0cm 1.75cm 0cm},clip]{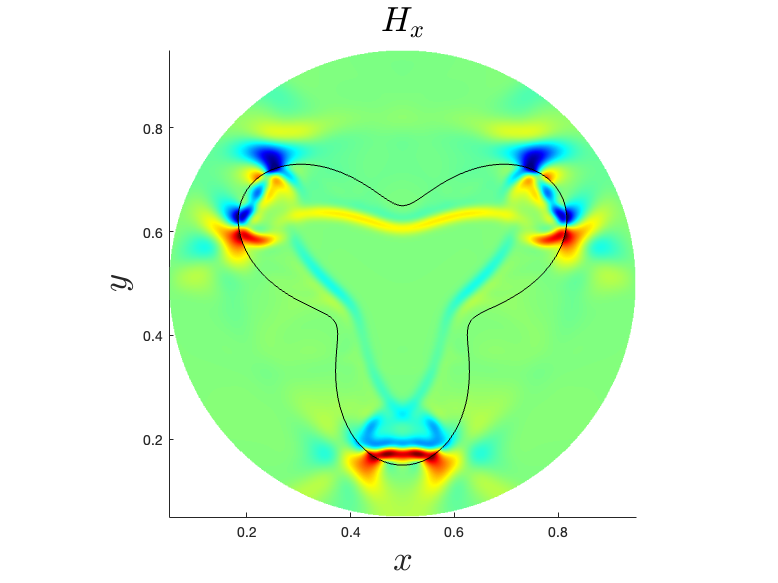}
		\includegraphics[width=2.5in,trim={2.5cm 0cm 1.75cm 0cm},clip]{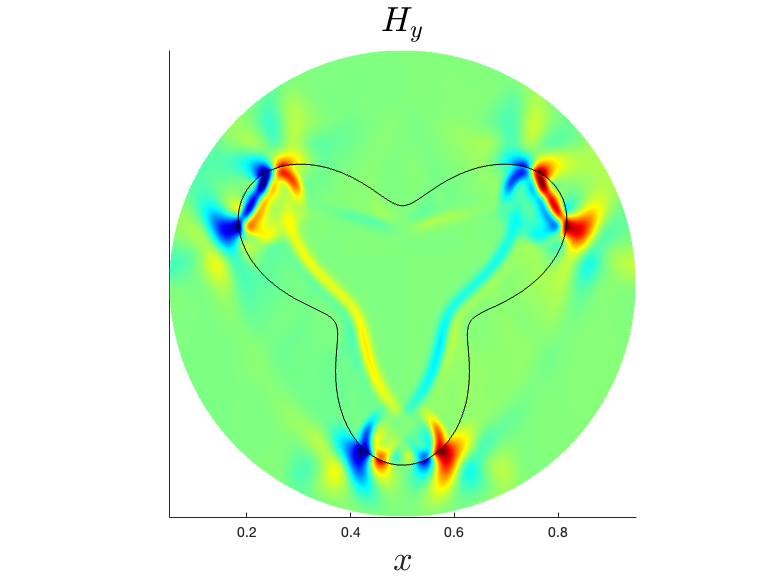}
	   	\includegraphics[width=2.5in,trim={2.5cm 0cm 1.75cm 0cm},clip]{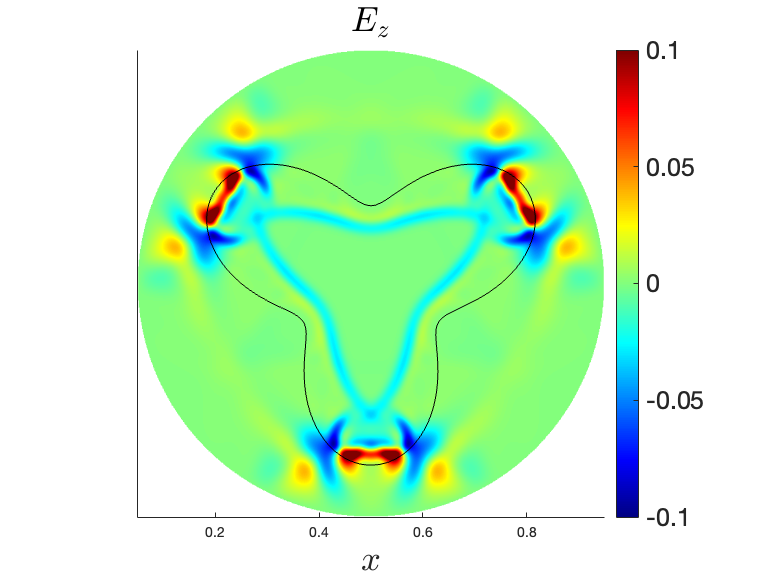}
	\end{adjustbox} 
       \caption{The components $H_x$, $H_y$ and $E_z$ at the final time $t_f=1$ for an interface problem with the geometry illustrated in Fig.~\ref{fig:geo_pblm_5} using the fifth-order Hermite-Taylor correction function method and $h=\frac{1}{1600}$. We consider a Gaussian pulse as an initial condition for $E_z$. The interface is represented by the black line.}
       \label{fig:magnetic_dielectric_complex_geo_gaussian_pulse_caption}
\end{figure}
The right plot of Fig.~\ref{fig:magnetic_dielectric_self_conv} illustrates the self convergence plots in the $L^2$-norm.
We obtain the expected $2\,m+1$ rates of convergence.
{\color{black} 
The right plot of Fig.~\ref{fig:magnetic_dielectric_self_conv_divH} shows the $2\,m$ rates of convergence of the divergence of the magnetic field.}

\subsubsection{Increasing the Order of the Interior Method}

As a final example,
    we increase the order of the interior method, 
    here the Hermite-Taylor method. This could be advantageous in cases where waves propagate many wavelengths in the interior domain.
This is possible since we can compute the derivatives through order $k$ when 
    correction function polynomials of degree $k$ are used, 
    allowing a Hermite-Taylor method with $m\leq k$.
Although it is not used here, 
	note that, 
	for localized pulses,
	the $p$-adaptive algorithm introduced in \cite{Chen2012} could be used for the interior method, 
	far away from the boundary. Then, although the extent to which the order can be increased adjacent to the boundary is limited, there would be no fundamental limits to the order achieved far from the boundaries.
We denote a Hermite-Taylor correction function method that uses derivatives through order $m$ and 
    correction function polynomials of degree $k$ as a $(k,m)$-method, 
    leading to a $(k+1)$-order method near the boundary and 
    $(2m+1)$-order method in the interior.
Here we consider the situation when $k/2 \leq m \leq k$ with $k=2-4$.
We set $N_d = k$.

We first perform long time simulations with $h=1/20$ using the same setup as 
    for the embedded boundary problems.
According to the results in subsection~\ref{sec:stability_2d},
    the $(2,1)$ and $(4,2)$ methods are numerically stable for 
    respectively a CFL constant smaller than $1$ and $0.8$.
For the remaining methods, 
    we observe that the $(2,2)$, $(3,2)$, $(3,3)$ and $(4,3)$ methods are stable under a CFL constant smaller than $0.9$, $1$, $0.5$ and $0.6$, 
    while the $(4,4)$-method is unstable.

We now consider the computational domain $\Omega_c=[0,1]\times[0,1]$. 
The embedded boundary $\Gamma$ is a circle with a radius of $0.45$ and 
    centered at $(0.5,0.5)$ that encloses the physical domain $\Omega$. 
We enforce PEC boundary conditions on the boundary $\Gamma$ and 
    consider the initial conditions $H_x=H_y=0$ and $E_z$ is given by \eqref{eq:gaussian_pulse}.
The physical parameters are $\mu=1$ and $\epsilon=1$.
The CFL constant are $0.9$, $0.85$, $0.9$, $0.45$, $0.75$ and $0.55$ for respectively $(2,1)$, $(2,2)$, $(3,2)$, $(3,3)$, $(4,2)$ and $(4,3)$ methods. 
We investigate the energy conservation on the time interval $I = [0,20]$ with $h=\Delta x = \Delta y = 1/200$. 
The energy at time $t$ is 
\begin{equation}
E(t) = \int\limits_\Omega \epsilon\|\mathbold{E}(\mathbold{x},t)\|^2 + \mu \| \mathbold{H}(\mathbold{x},t) \|^2 \,\mathrm{d}\mathbold{x}.
\end{equation}
Fig.~\ref{fig:energy} illustrates the evolution in time of the ratio $E(t)/E(0)$ for different $(k,m)$ methods. 
Based on these results, 
	the energy dissipates as the time increases, 
	demonstrating the stability of these methods. 	
As the degree $k$ of the correction functions increases,
	the energy is better conserved as expected.  
The decay in the energy due to the interior method for $m=2-3$ is negligible.
In all the cases, 
	the decay in the energy is mainly due to the correction function method when the waves interact with the boundary,
	representing by a sharp drop of the energy in Fig.~\ref{fig:energy}. 
\begin{figure}   
	\centering
	\begin{adjustbox}{max width=1.0\textwidth,center}
		\includegraphics[width=2.5in,trim={1.8cm 0cm 1.75cm 0cm},clip]{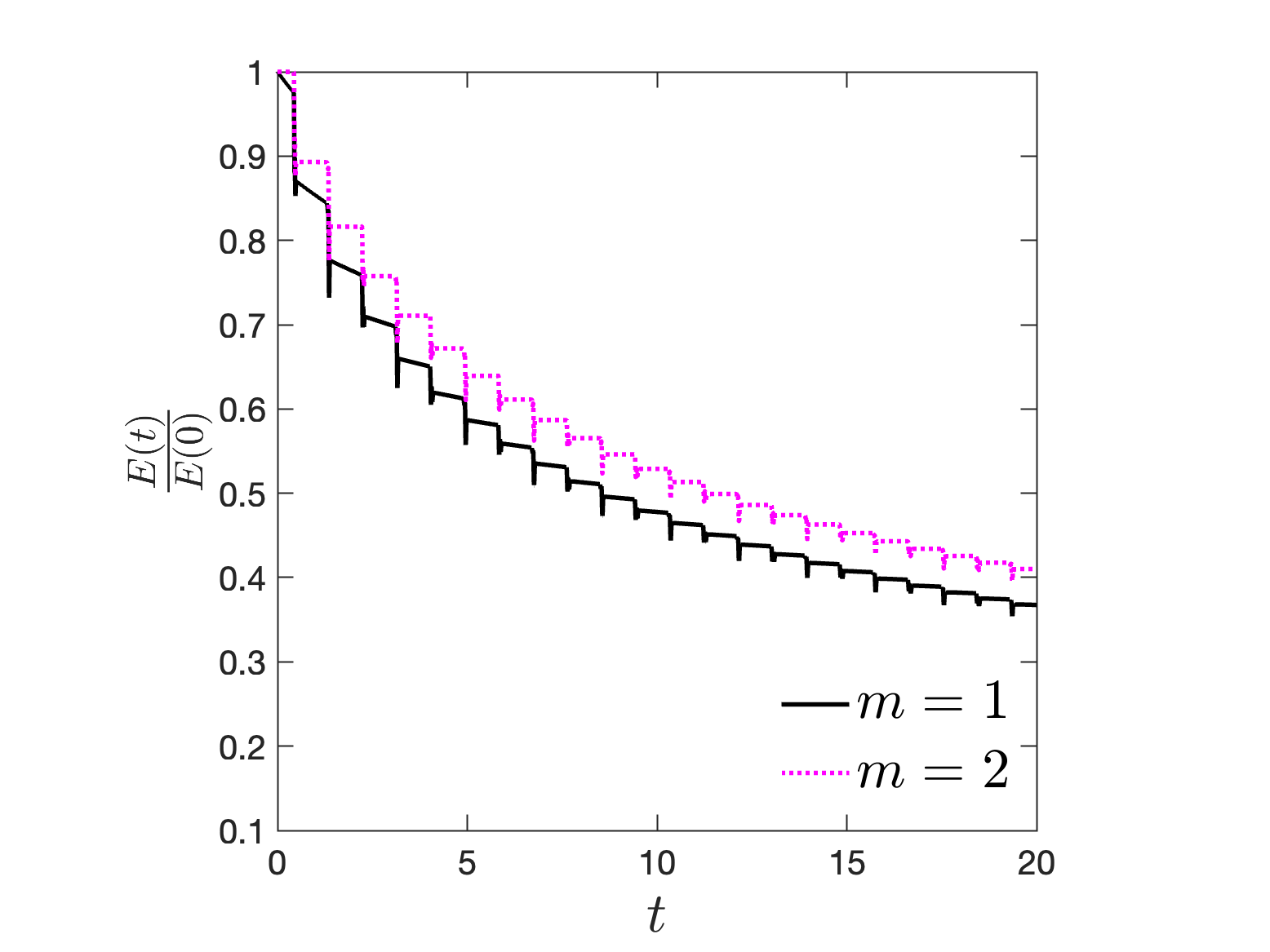}
		\includegraphics[width=2.5in,trim={1.8cm 0cm 1.75cm 0cm},clip]{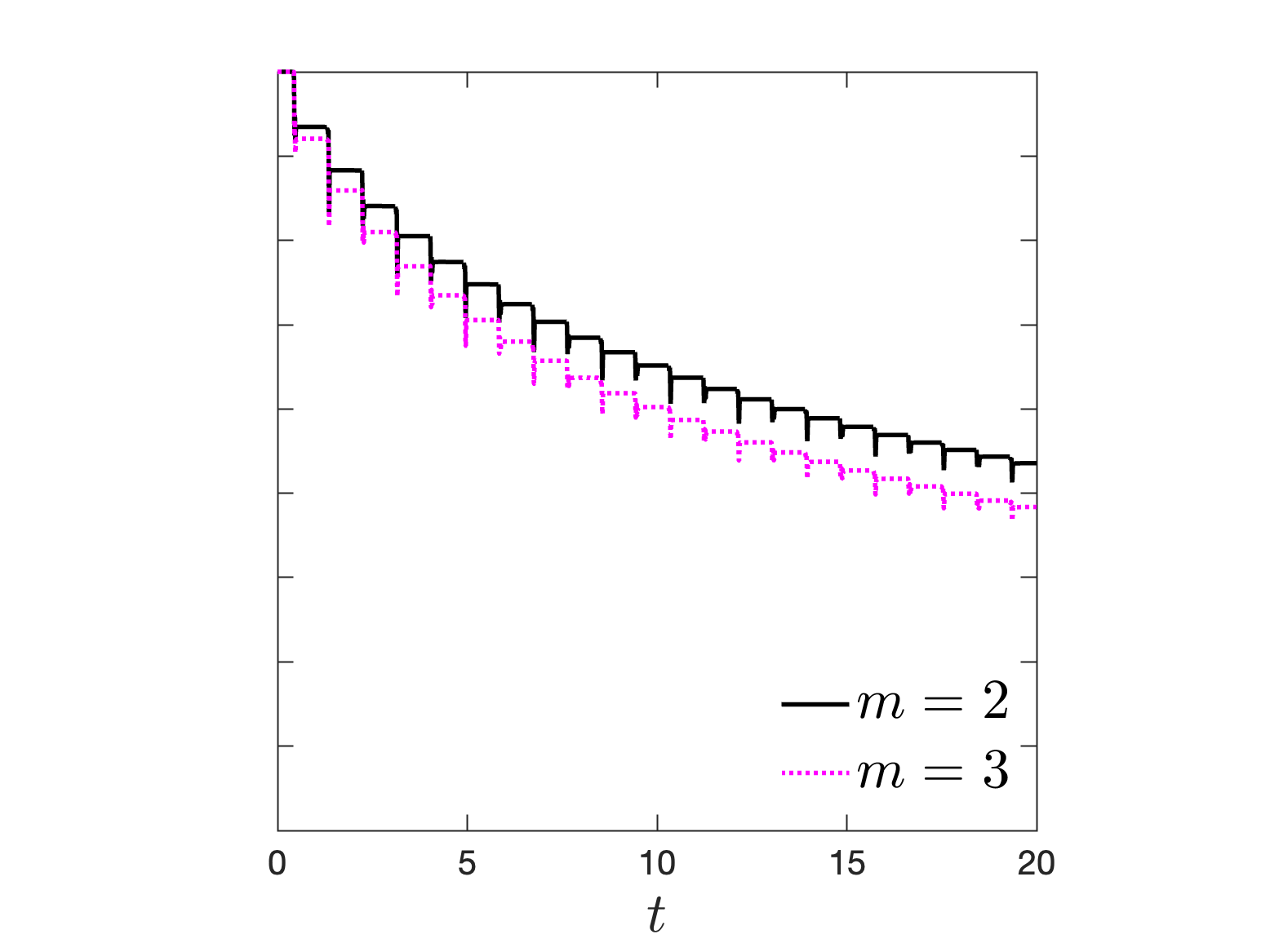}
	   	\includegraphics[width=2.5in,trim={1.8cm 0cm 1.75cm 0cm},clip]{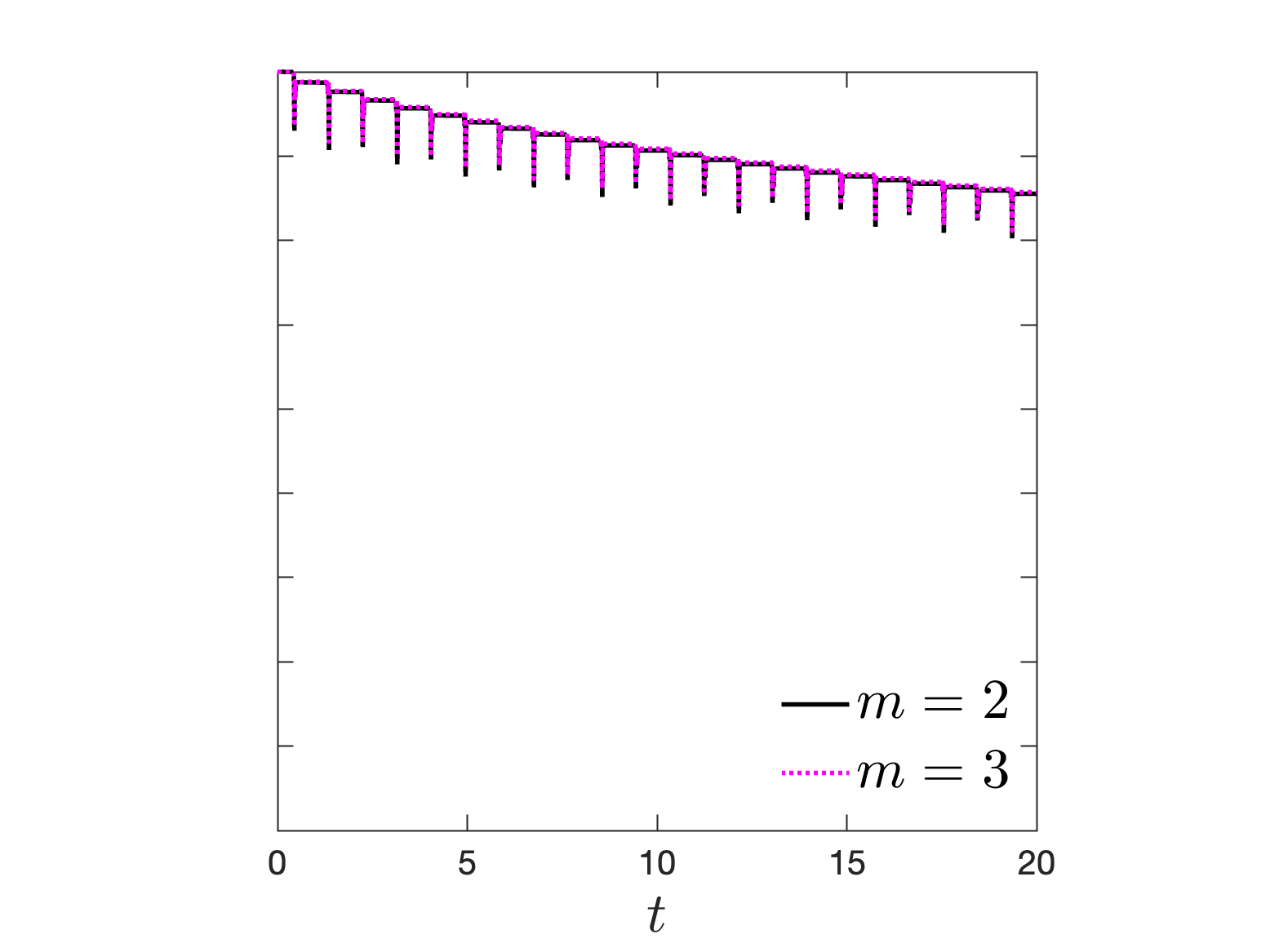}
	\end{adjustbox} 
       \caption{The evolution of the energy as a function of time for different $(k,m)$ methods. The left, middle and right plots are for respectively $k=2$, $k=3$ and $k=4$.}
       \label{fig:energy}
\end{figure}

\section{Conclusion}

In this work, 
	we have proposed a novel Hermite-Taylor correction function method to handle embedded boundary and interface 
	conditions for Maxwell's equations.
We take advantage of the correction function method to update the numerical solution at the nodes 
	where the Hermite-Taylor method cannot be applied.
To do so,
	we minimize a functional that is a square measure of the residual of Maxwell's equations, 
	the boundary or interface conditions,
	and the polynomials approximating the electromagnetic fields coming from the Hermite-Taylor method.
The stability condition of the Hermite-Taylor correction function method is improved by enforcing the time derivatives,
    converted into spatial derivatives, 
    of the boundary and interface conditions.
The approximations of the electromagnetic fields and their required derivatives are then updated using 
	the correction functions resulting from the minimization procedure.
{\color{black}
For the embedded boundary problem in one space dimension, 
    we were able to achieve up to a ninth-order rate of convergence and the methods are stable under a CFL constant between 0.7 and 1.
In two space dimensions, 
    numerical examples suggest that the third and fifth order Hermite-Taylor correction function methods 
	are stable under a CFL constant of 1 and 0.8 respectively.
The range of $m$ that can be used and therefore the order of the overall method are limited by the large condition number of the CF matrices coming from the minimization problem when $m>2$ in the multi-dimensional case.}
Note that we were able to increase the order of the interior method, 
	here the Hermite-Taylor method, 
	up to seven without significantly impacting the CFL constant.
The accuracy of the Hermite-Taylor correction function method was verified.
The proposed method achieves high order accuracy even with interface problems with discontinuous solutions. 
Finally, 
	this method can be easily adapted to other first order hyperbolic problems.
	
\section*{Declarations}
\section*{Funding} 
This work was supported in part by NSF Grants DMS-2208164,  DMS-2210286 and DMS-2012296. Any opinions, findings, and conclusions or recommendations expressed in this material are those of the authors and do not necessarily reflect the views of the NSF.
\section*{Conflicts of interest/Competing interests}
On behalf of all authors, the corresponding author states that there is no conflict of interest.




\bibliographystyle{elsarticle-num}
\bibliography{references}







\end{document}